\newtheorem{thm}{Theorem}\crefname{thm}{theorem}{theorems}
\newtheorem*{thm*}{Theorem}
\newtheorem{defn}[thm]{Definition}\crefname{defn}{definition}{definition}
\newtheorem*{defn*}{Definition}
\crefname{conj}{conjecture}{conjectures}
\newtheorem*{conj*}{Theorem}
\crefname{quest}{question}{questions}
\newtheorem*{quest*}{Question}
\newtheorem{cor}[thm]{Corollary}\crefname{cor}{corollary}{corollaries}
\newtheorem*{cor*}{Corollary}
\newtheorem{lem}[thm]{Lemma}\crefname{lem}{lemma}{lemmas}
\newtheorem*{lem*}{Lemma}
\newtheorem{prop}[thm]{Proposition}\crefname{prop}{proposition}{propositions}
\newtheorem*{prop*}{Proposition}
\crefname{note}{note}{notes}
\crefname{notation}{notation}{notation}
\crefname{notation}{notation}{notation}
\theoremstyle{remark}
\newtheorem{example}[thm]{Example}\crefname{example}{example}{examples}
\newtheorem*{example*}{Example}
\newtheorem{rem}[thm]{Remark}\crefname{rem}{remark}{remarks}
\newtheorem*{rem*}{Remark}
\newcommand{\ZZ}{\mathbb Z}
\newcommand{\QQ}{\mathbb Q}
\newcommand{\SO}{\mathrm{SO}}
\DeclareMathOperator{\diag}{diag}
\DeclareMathOperator{\id}{id}
\DeclareMathOperator{\rank}{rank}
\newcommand{\interrank}[2]{\mathrm{interrank}(#1,#2)}
\DeclareMathOperator{\im}{im}
\DeclareMathOperator{\Hom}{Hom}
\renewcommand{\H}{\mathrm{H}} 
\DeclareMathOperator{\Tor}{Tor}
\DeclareMathOperator{\projdim}{pd}
\DeclareMathOperator{\Spec}{Spec}
\newcommand{\support}[2][]{\mathrm{Supp}_{#1}#2}
\renewcommand{\iff}{\Leftrightarrow}
\newcommand{\sequence}[1]{\mathbf #1}
\newcommand{\ideal}[1]{\mathfrak #1}
\newcommand{\K}{\mathrm{K}}
\newcommand{\Rep}[1]{\mathrm{R}#1} 
\newcommand{\I}[1]{\ideal{I}#1} 
\newcommand{\Ip}[1]{\ideal{I}_p#1} 
\newcommand{\Weyl}{\mathrm W} 
\newcommand{\centre}[1]{\mathrm{Z} #1}
\newcommand{\centralizer}[2][]{\mathrm{Z}_{#1}#2}
\newcommand{\normalizer}[2][]{\mathrm{N}_{#1}#2}
\newcommand{\Grassmannian}[3][1]{\mathrm{Gr}_{#1}(#2,#3)} 
\newcommand{\Char}[1]{{\Hom(#1,S^1)}} 
\NewDocumentCommand{\koszul}{O{\bullet} m m}{\Lambda^{#1}_{#2}(#3)}
\newcommand{\ctext}[2]{\text{\parbox{#1}{\relax\ifvmode\centering\fi #2}}}
\newcommand{\ie}{i.\,e., }
\newcommand{\eg}{e.\,g.\ }
\newcommand{\back}{\backslash} 
\newcommand{\bibackslash}{\backslash\!\backslash} 
\newcommand{\biquotient}[3]{#2\back #1/#3}
\date{\today}
\title{An extension of Steinberg's Theorem to biquotient pairs of subgroups}
\author{Marcus Zibrowius\thanks{Research for this article was conducted within the framework of the DFG Research Training Group 2240:
Algebro-Geometric Methods in Algebra, Arithmetic and Topology.}
}
\begin{document}
\maketitle
\thispagestyle{empty}
\renewcommand{\abstractname}{}
\begin{abstract}
  We study the derived tensor product of the representation rings of subgroups of a given compact Lie group \(G\).
  That is, given two such subgroups \(H_1\) and \(H_2\), we study the tensor product of the associated representation rings \(\Rep H_1\) and \(\Rep H_2\) over the representation ring \(\Rep G\), and prove a vanishing result for the associated higher Tor-groups. This result can be viewed as a natural extension of the Theorem of Steinberg that asserts that the representation rings of maximal rank subgroups of \(G\) are free over \(\Rep G\).  It may also be viewed as an analogue of a result of Singhof on the cohomology of classifying spaces.  We include an immediate application to the complex K-theory of biquotient manifolds.
\end{abstract}

Consider a compact connected Lie group \(G\), with complex representation ring \(\Rep{G}\).  Given two closed connected subgroups \(H_1\) and \(H_2\), we can consider their representation rings \(\Rep{H_1}\) and \(\Rep{H_2}\) as \(\Rep{G}\)-modules.
The purpose of this paper is to prove:
\begin{thm}\label{thm:Tor-of-pairs-simplified}
  Suppose the fundamental group of \(G\) is torsion-free.  If \(H_1\) intersects every conjugate of \(H_2\) trivially, then
  \(
  \Tor_i^{\Rep G}(\Rep H_1, \Rep H_2)
  \)
  vanishes for all \(i > \rank G - (\rank H_1 + \rank H_2)\).
\end{thm}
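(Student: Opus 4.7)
The plan is to reduce the claim to a Tor vanishing over the regular polynomial ring \(\Rep G\). By the Pittie--Steinberg theorem applied to \(G\) (whose fundamental group is torsion-free), \(\Rep G\) is a polynomial \(\ZZ\)-algebra on \(r := \rank G\) generators, and hence regular. Geometrically, \(\Spec \Rep G\) is the quotient \(T \bislash \Weyl\) parametrising semisimple \(G\)-conjugacy classes, and \(\Spec \Rep H_i \to \Spec \Rep G\) sends an \(H_i\)-conjugacy class to its ambient \(G\)-class. The hypothesis on \(H_1, H_2\) then forces the fibre product
\[
\Spec \Rep H_1 \times_{\Spec \Rep G} \Spec \Rep H_2
\]
to collapse to the identity point: if \(s_1 \in H_1\) is \(G\)-conjugate to \(s_2 \in H_2\), then \(s_1 \in H_1 \cap g H_2 g^{-1} = \{e\}\). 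Consequently \(\Rep H_1 \otimes_{\Rep G} \Rep H_2\), and a fortiori all of the higher Tor groups, are supported only at the augmentation ideal \(\ideal{m}\), so we may localise and work over the regular local ring \((\Rep G)_{\ideal{m}}\) of dimension \(r\).

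The next step is to exhibit each \(\Rep H_i\) as a Cohen--Macaulay \(\Rep G\)-module of Krull dimension \(\rank H_i\)---equivalently, of projective dimension \(r - \rank H_i\) by Auslander--Buchsbaum. For the equal-rank case \(\rank H_i = r\) this is precisely Steinberg's original theorem. For general \(H_i\) the natural route is to use that, after conjugating the maximal torus \(T_{H_i}\) into a fixed \(T \subseteq G\), the restriction \(\Rep T \twoheadrightarrow \Rep T_{H_i}\) is a complete-intersection quotient cut out by a regular sequence of length \(r - \rank H_i\). Combined with the freeness of \(\Rep T\) over \(\Rep G\) (Steinberg again) and the identification \(\Rep H_i = (\Rep T_{H_i})^{\Weyl_{H_i}}\), this should yield a Koszul-type resolution of \(\Rep H_i\) over \(\Rep G\) of the required length.

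With such a resolution at hand, the Tor vanishing follows by a grade argument. Tensoring the length-\((r - \rank H_1)\) Koszul-type resolution of \(\Rep H_1\) with \(\Rep H_2\) produces a complex that is Koszul in elements acting on \(\Rep H_2\); the ideal they cut out has support only at the augmentation point of \(\Spec \Rep H_2\), forcing its grade to equal \(\rank H_2\). The Koszul homology therefore vanishes above degree \((r - \rank H_1) - \rank H_2\), which is the asserted bound. The main obstacle I foresee is the integral descent from \(\Rep T_{H_i}\) to its \(\Weyl_{H_i}\)-invariants \(\Rep H_i\): since \(\pi_1(H_i)\) may have torsion, \(\Rep T_{H_i}\) need not be free over \(\Rep H_i\), so Weyl invariants must be handled carefully at the level of resolutions rather than merely extracted as a direct summand.
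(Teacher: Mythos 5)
Your plan shares the paper's broad philosophy (translate the biquotient condition into a support statement via Segal's theory, then kill Tor with a Koszul/grade argument), but several of your steps do not hold as stated, and the gaps are exactly the ones the paper has to work hard to close.

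First, the support of the Tor modules is not a single maximal ideal. What the strict biquotient condition gives you, via Segal's theory of supporting subgroups, is that the support lies in \(\support[\Rep G]{\Rep(\{1\})} = V(\I G)\), where \(\I G = \ker(\rank\colon\Rep G\to\ZZ)\). This is the closure of the augmentation prime, a \emph{one-dimensional} closed subscheme: \(\I G\) itself is prime but not maximal (its quotient is \(\ZZ\)), and the maximal ideals over it are the \(\Ip G = \rank^{-1}(p)\) for the various integer primes \(p\), which have height \(\rank G + 1\). So there is no single regular local ring \((\Rep G)_{\ideal m}\) of dimension \(r\) to reduce to; one must verify vanishing after localising at \emph{each} prime in \(V(\I G)\), and the dimensions change with the prime. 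The paper handles this by working one prime \(\ideal p\in\support[\Rep G]{\Rep(\centre G)}\) at a time, and, crucially, by exploiting that such a \(\ideal p\) lifts to a \emph{unique} prime \(\ideal p_T\subset \Rep T\) (a Weyl-invariance argument) and that \(\Rep G\) is biequidimensional, to get the dimension count to come out.

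Second, your proposed route to a ``Koszul-type resolution of \(\Rep H_i\) over \(\Rep G\)'' via Weyl descent from \(\Rep T_{H_i}\) is exactly the obstacle you name at the end, and the paper does not take it: instead it uses holomorphic induction (\(i_*i^*=\id\)), which makes \(\Rep H_i\) a direct summand of \(\Rep T_i\) as an \(\Rep G\)-module, so that one may simply replace each \(H_i\) by its maximal torus with no descent problem at all. Even then, what one gets is a free \(\Rep G\)-resolution of \(\Rep T_1\) (the Koszul complex over \(\Rep T\), free over \(\Rep G\) by Steinberg), not a Koszul complex over \(\Rep G\); the restriction \(\Rep G\to \Rep T_1\) is not surjective, so \(\Rep T_1\) is genuinely not a complete-intersection quotient of \(\Rep G\). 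The paper's fix for the \emph{other} factor is the ``reduction to the diagonal'': replace \((G,H_1,H_2)\) by \((G\times G, H_1\times H_2, \Delta G)\), so that \(\Rep(\Delta G) = \Rep G\) \emph{is} a quotient of \(\Rep(G\times G)=\Rep G\otimes\Rep G\) by a regular sequence of length \(\rank G\). This diagonal trick, which your sketch lacks, is what lets the double-Koszul argument go through cleanly.

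Third, the final grade argument is not justified. You assert that the ideal cut out in \(\Rep H_2\) has grade exactly \(\rank H_2\) because of the support computation, but grade equals codimension only under Cohen--Macaulay hypotheses, which you neither state nor verify; and the correct codimension count involves the discrepancy between \(\dim\Rep H_2 = \rank H_2 + 1\) and the dimension of \(V(\I{H_2})\). In the paper, this is where biequidimensionality of \(\Rep G\), a finite-extension dimension lemma, and Segal's dimension bound \(\dim(\Rep G/\ideal p)\le \rank S_{\ideal p}+1\) do the work, all at a fixed prime \(\ideal p\). Your sketch compresses all of this into ``forcing its grade to equal \(\rank H_2\)'', and that is precisely the part of the argument that requires the most care.

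One smaller point: for torsion-free \(\pi_1(G)\), \(\Rep G\) is a polynomial ring tensored with a \emph{Laurent} ring, not a polynomial \(\ZZ\)-algebra, and its Krull dimension is \(\rank G + 1\), not \(\rank G\). This matters for the dimension bookkeeping throughout.
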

This result is motivated by a theorem of Pittie and Steinberg, on which it builds and which it extends, and by a result of Singhof, of which it is an analogue.  We briefly discuss each motivation in turn, and explain the implications for the complex K-theory of biquotient manifolds.

The first motivation is the following result which, following \cite{Panin:twisted} and \cite{Ananyevskiy:homogeneous}, we simply refer to as \textit{Steinberg's Theorem}.\footnote{See \Cref{historical-remark} for a historical discussion.}
\begin{thm*}[Steinberg \cite{Steinberg:Pittie}]
  For any compact connected Lie group \(G\) with torsion-free fundamental group, and for any closed connected subgroup \(H\) of maximal rank (\ie \(\rank H = \rank G)\), the representation ring \(\Rep H\) is 
  free as an \(\Rep G\)-module.
\end{thm*}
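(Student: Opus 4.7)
The plan is to follow Steinberg's original strategy, reducing to a statement about Weyl group invariants on the maximal torus and then exhibiting an explicit basis.

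First I would choose a common maximal torus \(T\) with \(T \subset H \subset G\), which is possible because \(H\) has maximal rank. Write \(W_G = \normalizer[G]{(T)}/T\) and \(W_H = \normalizer[H]{(T)}/T\), so that \(W_H\) sits as a (reflection) subgroup of \(W_G\). The torsion-freeness of \(\pi_1 G\) is precisely what one needs to identify \(\Rep G\) with the invariants \((\Rep T)^{W_G}\) via restriction, a classical consequence of the Weyl character formula. An analogous identification \(\Rep H = (\Rep T)^{W_H}\) holds when \(\pi_1 H\) is torsion-free; the subtle point that \(\pi_1 H\) may have torsion will have to be addressed separately at the end.

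The central technical step is to construct an explicit basis of \(\Rep T\) as a free module over \((\Rep T)^{W_G}\), indexed by the Weyl group. Following Steinberg, I would fix a system of simple roots and, for each \(w \in W_G\), define an element \(\eta_w \in \Rep T\) as an iterated Demazure-style difference built from the characters attached to the inversions of a reduced expression for \(w\). Proving that \(\{\eta_w : w \in W_G\}\) is both linearly independent over \((\Rep T)^{W_G}\) and generates \(\Rep T\) is the heart of the argument: independence comes from a triangularity statement with respect to the Bruhat order on \(W_G\), and generation follows by a rank count after inverting suitable elements or by passing to the fraction field and comparing dimensions to \(|W_G|\).

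Once the Steinberg basis is in hand, the descent from \(T\) to a general maximal-rank \(H\) becomes essentially formal. Picking representatives \(v_1,\ldots,v_r\) for \(W_G/W_H\) and forming \(W_H\)-averages of the corresponding \(\eta_{v_i}\), the same triangularity ensures that these averages constitute a free basis of \((\Rep T)^{W_H}\) over \((\Rep T)^{W_G}\). What remains is the identification of \((\Rep T)^{W_H}\) with \(\Rep H\); under the hypothesis that \(\pi_1 H\) is torsion-free this is automatic, and in the general case one can argue via a finite cover of \(H\) with torsion-free fundamental group together with a transfer argument that preserves freeness over \(\Rep G\).

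The main obstacle I anticipate is the construction and verification of the Steinberg basis itself, which requires genuinely combinatorial input from the reflection representation of \(W_G\); everything outside this step — the reduction to the torus, the descent to \(W_H\)-invariants, and the identification with \(\Rep H\) — is comparatively soft.
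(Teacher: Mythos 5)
Note first that the paper does not prove this theorem at all: it is quoted from \cite{Steinberg:Pittie}, and the torsion-free case as stated is recovered by combining \Cref{torsion-free-fundamental-group-implies-good} with \Cref{thm:steinberg}. Your reconstruction of Steinberg's original argument contains a genuine misconception about where the hypothesis is used. You assert that torsion-freeness of \(\pi_1 G\) ``is precisely what one needs to identify \(\Rep G\) with the invariants \((\Rep T)^{W_G}\)'', and you then flag a corresponding difficulty for \(H\) and propose a transfer argument through a cover to patch it. But the isomorphism \(\Rep G \cong (\Rep T)^{\Weyl}\) via restriction holds for \emph{every} compact connected Lie group, with no hypothesis on the fundamental group (\Cref{RG-for-connected-G}; \cite[Theorem~6.20]{Adams:LieLectures}), and the same is true for \(H\). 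The torsion-freeness hypothesis enters elsewhere entirely: it is what makes \(\Rep T\) \emph{free} over \(\Rep G\), i.e.\ what makes the Steinberg basis exist. For \(G = \mathrm{PSU}(3)\), say, \(\Rep G\) is still \((\Rep T)^{\Weyl}\), but \(\Rep T\) is not free over it. So the ``transfer argument at the end'' is a non-step — and, more importantly, if you believe the hypothesis is consumed by the identification with invariants, you will look in the wrong place for where it actually does its work in the construction of the basis.

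The descent step also needs more care than ``essentially formal''. Averaging arbitrary coset representatives \(\eta_{v_i}\) over \(W_H\) does not evidently produce a basis of \((\Rep T)^{W_H}\) over \(\Rep G\); one has to choose the representatives compatibly (for instance, of minimal length in their cosets) so that the Bruhat-triangularity you invoke for the full basis carries over to the averaged elements. This is genuine content in Steinberg's paper, not a soft corollary. Your overall plan — construct the basis of \(\Rep T\), then descend to \(W_H\)-invariants — is the right one and is, of course, a genuinely different route from the paper, which simply cites the result; but the two points above are where the proposal currently goes wrong or stops short.
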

Note that under the stated assumptions, a finitely generated \(\Rep G\)-module is free if and only if it is flat (see \Cref{flat-is-projective-is-free-for-regular-G}), so Steinberg's Theorem is equivalent to the assertion that \(\Tor_i^{\Rep G}(\Rep H,-)\) vanishes in all positive degrees~\(i\) for all connected subgroups \(H\) of maximal rank.
In particular -- and this is historically the case of most interest -- all higher Tor-groups \(\Tor_i^{\Rep G}(\Rep H,\ZZ)\) vanish for connected subgroups \(H\) of maximal rank, which immediately implies that a certain spectral sequence of Hodgkin computing the complex K-theory of the homogeneous space \(G/H\) collapses (see \Cref{sec:K} for details).  This case corresponds to the special case of \Cref{thm:Tor-of-pairs-simplified} when \(H_1\) is of maximal rank and \(H_2\) is trivial, and it is in this sense that \Cref{thm:Tor-of-pairs-simplified} is an extension.

\begin{@empty}
\newcommand{\B}{\mathrm{B}}

The motivating result of Singhof can be stated as follows:
\begin{thm*}[Singhof {\cite[Proposition~(6.4)]{Singhof:DCM}}]

  Let \(G\) be a compact connected Lie group (with arbitrary fundamental group), with closed connected subgroups \(H_1\) and \(H_2\).
  Consider the rational\footnote{
    The theorem holds more generally for cohomology with coefficients in any integral domain for which the three cohomology rings \(\H^*(\B G)\), \(\H^*(\B H_1)\) and \(\H^*(\B H_2)\) are polynomial algebras with generators in even dimensions.  This is always true over \(\QQ\); see \eg \cite[Theorem~6.38]{McCleary}.
  }
  cohomology rings \(\H^*(-) := \H^*(-;\QQ)\) of the classifying spaces \(\B G\), \(\B H_1\) and \(\B H_2\).
  If \(H_1\) intersects every conjugate of \(H_2\) trivially, then the graded \(\Tor\)-groups
  \(
  \Tor_i^{\H^*(\B G)}(\H^*(\B H_1), \H^*(B H_2))
  \)
  vanish for all \(i > \rank G - (\rank H_1 + \rank H_2)\).
\end{thm*}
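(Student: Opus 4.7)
The plan is to reduce to the case where $H_1$ and $H_2$ are tori inside a common maximal torus of $G$, and then compute the resulting $\Tor$ groups by a change-of-rings argument combined with a Koszul-complex analysis.

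First, for each $i$ pick a maximal torus $T_i \subseteq H_i$ with Weyl group $W_i = \normalizer[H_i]{T_i}/T_i$. By Borel's theorem, $\H^*(\B H_i) = \H^*(\B T_i)^{W_i}$, and over $\QQ$ the averaging projector $\tfrac{1}{|W_i|}\sum_{w \in W_i} w^{*}$ is $\H^*(\B G)$-linear, so $\H^*(\B H_i)$ is an $\H^*(\B G)$-module direct summand of $\H^*(\B T_i)$.  Taking $\Tor$ of pairs of direct summands produces a direct summand of the $\Tor$ of the ambient modules, so it suffices to prove the theorem with $H_i$ replaced by $T_i$; the ranks match and the trivial-intersection hypothesis passes to the subtori. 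Since maximal tori of $G$ are conjugate and conjugation acts trivially on $\H^*(\B G)$, I may further assume $T_1, T_2 \subseteq T$ for one fixed maximal torus $T \subseteq G$.

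Set $R = \H^*(\B G;\QQ)$, $A = \H^*(\B T;\QQ) \cong \QQ[\mathfrak{t}^{*}]$, and $A_i = \H^*(\B T_i;\QQ) = A/J_i$, where $J_i$ is generated by the linear subspace $V_i := \mathfrak{t}_i^{\perp} \subset \mathfrak{t}^{*} = A^{2}$ of dimension $n - m_i$ (with $n = \rank G$, $m_i = \rank T_i$).  By Chevalley--Shephard--Todd, $R = A^{W}$ for $W = \normalizer[G]{T}/T$, and $A$ is a free $R$-module.  Applying the hypothesis to $g$ representing $w \in W$ yields $\mathfrak{t}_1 \cap w\mathfrak{t}_2 = 0$ for every $w \in W$; equivalently, the restriction $w^{-1}V_2 \to \mathfrak{t}_1^{*} = (A_1)^{2}$ induced by $\mathfrak{t}_1 \hookrightarrow \mathfrak{t}$ is surjective with kernel of dimension exactly $n - m_1 - m_2$. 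Now change of rings along the flat inclusion $R \hookrightarrow A$ gives
\[
  \Tor^R_i(A_1, A_2) \;\cong\; \Tor^A_i(A_1 \otimes_R A,\; A_2).
\]
The pullback $\Spec(A_1 \otimes_R A) = \mathfrak{t}_1 \times_{\mathfrak{t}/W} \mathfrak{t}$ is the union of the $|W|$ graphs of the maps $w\colon \mathfrak{t}_1 \to \mathfrak{t}$, so generically $A_1 \otimes_R A$ splits as $\bigoplus_{w \in W}$ copies of $A_1$, with $A$ acting on the $w$-summand through $a \mapsto (w^{-1}a)|_{\mathfrak{t}_1}$.  Substituting this decomposition reduces the computation, for each $w$, to the Koszul homology $H_{*}\bigl(K_{\bullet}(w^{-1}V_2;\, A_1)\bigr)$ -- the Koszul complex over $A_1$ on the image of $w^{-1}V_2$.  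By the dimension count above, this complex factors as a tensor product of a subcomplex on a linear regular sequence generating the maximal ideal of $A_1 = \QQ[\mathfrak{t}_1^{*}]$ (acyclic in positive degrees, $H_0 = \QQ$) and an exterior algebra $\Lambda^{\bullet}(\ker)$ on a space of dimension $n - m_1 - m_2$ with trivial differential.  A Künneth argument then gives $H_i \cong \Lambda^{i}(\ker)$, which vanishes for $i > n - m_1 - m_2$, completing the proof.

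The principal technical obstacle is the decomposition $A_1 \otimes_R A \cong \bigoplus_{w} A_1$, which holds cleanly only away from the discriminant of the cover $\mathfrak{t} \to \mathfrak{t}/W$.  Under our hypothesis, the $\Tor$ groups are supported at the origin of $\mathfrak{t}/W$, which lies inside the discriminant locus, so one must either verify that the splitting persists derivedly after localization there, or circumvent it entirely by working with a $W$-equivariant Koszul resolution of $A_1$ over $R$ that keeps track of the full Weyl action throughout.  I expect this bookkeeping -- rather than either the reduction to tori or the final Koszul computation -- to be the main difficulty.
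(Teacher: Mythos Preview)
The paper does not prove this statement; it is quoted from \cite{Singhof:DCM} as motivation, and the only remark the paper makes about Singhof's argument is that it ``makes crucial use of the grading in cohomology'' and bears ``little relation'' to the paper's own proof of the representation-ring analogue (\Cref{thm:Tor-of-pairs}).  So there is no proof in the paper to compare against directly.

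That said, your outline closely parallels the paper's proof of the analogous \Cref{thm:Tor-of-pairs}: reduce to tori in a common maximal torus (the paper's \Cref{red:torus}), change rings to get into a Koszul situation, and finish by a regular-sequence count.  The gap you flag is real and is exactly the obstacle the paper runs into in the representation-ring setting: the naive decomposition \(A_1\otimes_R A \cong \bigoplus_{w\in W} A_1\) fails over the discriminant (already for \(G=\SU(2)\), \(T_1=T\), where \(A\otimes_R A \cong \QQ[x,y]/(x^2-y^2)\) is not a product), and the Tor-groups you want are supported precisely there.  The paper's fix is \emph{not} to repair the splitting but to sidestep it via the ``reduction to the diagonal'' (\Cref{diagonal-reformulation}, \Cref{maximal-rank-tori}): replace \((G,T_1,T_2)\) by \((G\times G,\;T_1\times T_2,\;\Delta G)\), so that one of the two modules becomes a quotient of the base ring by an \emph{explicit} regular sequence, and the change-of-rings step goes through without needing to understand \(A\otimes_R A\).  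The same trick works verbatim in the cohomological setting (indeed more easily, since \(\H^*(\B G;\QQ)\) is already polynomial) and would close your gap; combined with the enlargement step (\Cref{enlarging-tori}) to reach the maximal-rank case, this gives a complete argument along your lines.
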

\end{@empty}

We will refer to the condition on \(H_1\) and \(H_2\) appearing in \Cref{thm:Tor-of-pairs-simplified} and in the result of Singhof as the \textbf{strict biquotient condition}.
This condition arises rather naturally.  It is equivalent to the condition that \(H_1\) acts freely from the left on the homogeneous space \(G/H_2\) \cite[(1.1)]{Singhof:DCM}, and hence implies that the double coset space, or biquotient, \(\biquotient{G}{H_1}{H_2}\) is a smooth manifold.   We thus refer to \(\biquotient{G}{H_1}{H_2}\) as \textbf{biquotient manifold}.\footnote{
  In \cite{Singhof:DCM}, the strict biquotient condition is referred to as `double coset condition', and biquotient manifolds are called `double coset manifolds'. The term `biquotient' used here appears to be vastly more popular.
}

The study of such biquotient manifolds has a rich history.  We refer to \cite[Chapter~1]{DeVito:thesis} for a concise overview, in particular with respect to the quest for manifolds of positive curvature, and for an advertisement of biquotient manifolds as a fertile testing ground for conjectures relating the topology to the geometry of manifolds.  For classification results, see in particular  \cite{Eschenburg:thesis,Ziller:lectures,KapovitchZiller,Totaro:biquotients,DeVito:6and7}.

Singhof's main application of the above vanishing result was a computation of the cohomology of  biquotient manifolds, via an Eilenberg--Moore spectral sequence.
Similarly, in \Cref{sec:K}, we will use \Cref{thm:Tor-of-pairs-simplified} to compute the complex K-theory of biquotient manifolds, via the spectral sequence of Hodgkin already mentioned, generalizing the classical computation of the complex K-theory of homogeneous spaces:

\begin{cor}\label{K-of-biquotients}
  Consider a compact connected Lie group \(G\) with torsion-free fundamental group, closed connected subgroups \(H_1\) and \(H_2\) satisfying the strict biquotient condition, and the associated biquotient manifold \(\biquotient{G}{H_1}{H_2}\).
  If \(\rank H_1 + \rank H_2 \geq \rank G - 1\), we have the following isomorphisms:
  \begin{align*}
    \K^0(\biquotient{G}{H_1}{H_2}) &\cong \Rep H_1 \otimes_{\Rep G} \Rep H_2\\
    \K^1(\biquotient{G}{H_1}{H_2}) &\cong \Tor_1^{\Rep G}(\Rep H_1, \Rep H_2)
  \end{align*}
  In the maximal rank case, \ie when \(\rank H_1 + \rank H_2 = \rank G\), the groups in the second line vanish, \ie \(\K^1(\biquotient{G}{H_1}{H_2}) = 0\).
\end{cor}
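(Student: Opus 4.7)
The plan is to invoke Hodgkin's spectral sequence and check that, under the hypothesis, it degenerates at \(E_2\) with no extension problem.

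Hodgkin's spectral sequence, available because \(\pi_1(G)\) is torsion-free, takes the form
\[
  E_2^{-p,q} = \Tor_p^{\Rep G}\bigl(\Rep H_1, \Rep H_2\bigr)^q \;\Longrightarrow\; \K^{q-p}(\biquotient{G}{H_1}{H_2}),
\]
with differentials \(d_r\colon E_r^{-p,q}\to E_r^{-p+r,q-r+1}\) (the \(\ZZ/2\)-grading on the target corresponds to the parity of \(q-p\), as is standard). First I would write this down explicitly, citing the relevant results of Hodgkin, and identify the \(E_2\)-term with the algebraic Tor-groups computed in \Cref{thm:Tor-of-pairs-simplified}.

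The key observation is now essentially formal. By \Cref{thm:Tor-of-pairs-simplified}, the strict biquotient condition together with the numerical hypothesis \(\rank H_1 + \rank H_2 \geq \rank G - 1\) forces
\(\Tor_i^{\Rep G}(\Rep H_1,\Rep H_2)=0\) for all \(i\geq 2\). Thus the \(E_2\)-page is concentrated in the two columns \(p=0\) and \(p=1\). For any \(r\geq 2\) the differential \(d_r\) starting at one of these columns lands in a column with index \(-p+r\geq 1\), which is zero. Hence \(E_2=E_\infty\), and the spectral sequence collapses.

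Next I would address the extension problem: for each total degree \(n\in\ZZ/2\), the filtration on \(\K^n(\biquotient{G}{H_1}{H_2})\) has at most one nonzero associated graded piece, namely the \(p=0\) column (giving \(\Rep H_1 \otimes_{\Rep G} \Rep H_2\)) when \(n\equiv 0\), and the \(p=1\) column (giving \(\Tor_1^{\Rep G}(\Rep H_1,\Rep H_2)\)) when \(n\equiv 1\). So no nontrivial extension can occur, and the two displayed isomorphisms follow. The maximal rank addendum is then immediate: in that case \Cref{thm:Tor-of-pairs-simplified} already kills \(\Tor_i\) for \(i\geq 1\), so \(\K^1\) vanishes.

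The only real obstacle I anticipate is a careful verification that Hodgkin's spectral sequence applies in exactly the generality stated (a compact connected \(G\) with torsion-free \(\pi_1\), acting on itself via \(H_1\times H_2\) with quotient the smooth manifold \(\biquotient{G}{H_1}{H_2}\)); this is standard but requires pointing to the appropriate reference. The algebra afterwards is purely bookkeeping.
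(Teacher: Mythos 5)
Your proposal is correct and follows essentially the same route as the paper: invoke the Hodgkin spectral sequence for \(\K^*_G(X\times Y)\) with \(X = H_1\backslash G\), \(Y = G/H_2\), identify the \(E_2\)-page with \(\Tor^{\Rep G}_*(\Rep H_1,\Rep H_2)\), and observe that by \Cref{thm:Tor-of-pairs-simplified} only the columns \(p=0,1\) survive, forcing collapse at \(E_2\) and making the extension problem trivial. The paper is more terse at this step and simply delegates the spectral-sequence bookkeeping to an external reference, whereas you carry it out in place — a fair trade. One small inaccuracy: the relevant \(G\)-action is the diagonal action on \(H_1\backslash G \times G/H_2\), not \(G\) ``acting on itself via \(H_1\times H_2\)''; the latter describes the alternative \((H_1\times H_2)\)-equivariant setup, which is not the form of Hodgkin's theorem that the paper cites. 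You should also make the identification of the \(G\)-equivariant K-groups explicit, namely \(\K^*_G(H_1\backslash G)\cong \Rep H_1\), \(\K^*_G(G/H_2)\cong \Rep H_2\), and \(\K^*_G(X\times Y)\cong \K^*(\biquotient{G}{H_1}{H_2})\) via the principal \(G\)-bundle \(H_1\backslash G\times G/H_2\to \biquotient{G}{H_1}{H_2}\).
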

Note that all concepts under consideration here depend only on the conjugacy classes of \(H_1\) and \(H_2\):
The representation rings of conjugate subgroups \(H_i\) and \(H_i'\) of \(G\) are isomorphic as \(\Rep{G}\)-modules (see \Cref{rem:Rep-of-conjugate-subgroups}), and the biquotients \(\biquotient{G}{H_1}{H_2}\) and \(\biquotient{G}{H_1'}{H_2'}\) are diffeomorphic.
\Cref{K-of-biquotients} implies that, under the stated assumptions, the natural map from \(\Rep(H_1\times H_2) \cong \Rep H_1\otimes_\ZZ \Rep H_2\) to \(\K^0(\biquotient{G}{H_1}{H_2})\) is surjective.  This provides a partial answer to a question raised in \cite{DeVitoGonzalez}.

\bigskip

We will establish \Cref{thm:Tor-of-pairs-simplified} in slightly greater generality than stated above.

First, the assumptions on \(G\) can be relaxed slightly.
Let's say a Lie group \(G\) is \textbf{good} if it is compact and connected and if \(\Rep G\) is a polynomial ring tensored with a Laurent ring.  All compact connected Lie groups with torsion-free fundamental group are good (\Cref{torsion-free-fundamental-group-implies-good}), but not conversely.
Steinberg established the theorem cited above under this more general assumption (see \Cref{thm:steinberg} below), and our \Cref{thm:Tor-of-pairs-simplified} likewise holds for all good~\(G\).

Second, the strict biquotient condition can be relaxed as follows.  Two subgroups \(H_1\) and \(H_2\) of \(G\) satisfy the \textbf{lax biquotient condition} if every intersection of \(H_1\) with a conjugate of \(H_2\) is central in \(G\).  In this case, the action of \(H_1\) on \(G/H_2\) is still effectively free, and the double coset space \(\biquotient{G}{H_1}{H_2}\) is still a smooth manifold.  \Cref{thm:Tor-of-pairs-simplified} also holds in this situation, but with an obvious correction term.  Given any pair of subgroups \(H_1\), \(H_2\), we define the \textbf{intersection rank} \(\interrank{H_1}{H_2}\) as the maximum of the ranks of the intersections \(H_1\cap gH_2 g^{-1}\) as \(g\) ranges over all elements of \(G\).  With this definition in place, the result reads as follows:

\begin{thm}\label{thm:Tor-of-pairs}
  Let \(G\) be a good Lie group, and let \(H_1\) and \(H_2\) be closed connected subgroups.
  If \(H_1\) and \(H_2\) satisfy the lax biquotient condition, then
  \[
  \Tor_i^{\Rep G}(\Rep H_1, \Rep H_2) = 0
  \]
  for all
  \(
    i > \rank G - (\rank H_1 + \rank H_2) + \interrank{H_1}{H_2}
  \).
\end{thm}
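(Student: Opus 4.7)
The plan is to reduce the theorem, via faithfully flat base change, to an explicit Koszul computation on a maximal torus of $G$.

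Fix a maximal torus $T$ of $G$, and choose (after conjugating $H_2$ if necessary) maximal tori $T_j\subset H_j$ lying in $T$. Since $G$ is good, Steinberg's theorem (\Cref{thm:steinberg}) implies that $\Rep T$ is free --- and in particular faithfully flat --- as an $\Rep G$-module. Flat base change for $\Tor$ then yields
\[
\Tor_i^{\Rep G}(\Rep H_1,\Rep H_2)\otimes_{\Rep G}\Rep T\;\cong\;\Tor_i^{\Rep T}\bigl(\Rep H_1\otimes_{\Rep G}\Rep T,\;\Rep H_2\otimes_{\Rep G}\Rep T\bigr),
\]
so vanishing of the left-hand side is equivalent, degree by degree, to vanishing of the right. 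The next step is to identify each $\Rep H_j\otimes_{\Rep G}\Rep T$ as a direct sum indexed by Weyl-group cosets, each summand being a representation ring $\Rep(wT_j)$ of a $G$-conjugate of $T_j$ inside $T$. This reflects the $W$-Galois relation $\Rep G=(\Rep T)^W$, combined with a reduction $\Rep H_j\rightsquigarrow\Rep T_j$ of Steinberg type applied inside $H_j$.

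With such a decomposition in hand, the computation reduces to $\Tor_i^{\Rep T}(\Rep(wT_1),\Rep(w'T_2))$ for pairs of $G$-conjugate subtori of $T$. Each $\Rep(wT_j)$ is the quotient of $\Rep T$ by an ideal generated by a regular sequence of length $\rank G-\rank T_j$ (namely, the elements $e^\chi-1$ for a basis $\chi$ of the kernel of restriction $\hat T\to\widehat{wT_j}$). A standard Koszul argument --- extract from the $\rank G-\rank T_2$ generators of the second ideal a regular subsequence of length equal to the height of that ideal in the quotient by the first --- shows that this $\Tor$ vanishes for $i>(\rank G-\rank T_1)+(\rank G-\rank T_2)-h$, where $h=\rank G-\rank(wT_1\cap w'T_2)$ is the height in $\Rep T$ of the sum of the two ideals.

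The lax biquotient condition enters in the final estimate: $wT_1\cap w'T_2$ is a subtorus of $wH_1w^{-1}\cap w'H_2w'^{-1}$, which is a $G$-conjugate of some $H_1\cap gH_2g^{-1}$; hence its rank is at most $\interrank{H_1}{H_2}$. Substituting yields the claimed vanishing range
\[
i\;>\;\rank G-(\rank H_1+\rank H_2)+\interrank{H_1}{H_2}.
\]
The main obstacle, in my view, will be the precise identification of $\Rep H_j\otimes_{\Rep G}\Rep T$ as a sum of representation rings of conjugate subtori: rationally this is classical $W$-Galois descent, but integrally one must contend with the possibility that $H_j$ itself is not good. A plausible workaround is to first treat the case in which both $H_1$ and $H_2$ are tori, and then reduce the general case to it by a separate descent along $\Rep H_j\to\Rep T_j$.
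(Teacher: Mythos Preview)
Your central step—decomposing $\Rep T_j \otimes_{\Rep G} \Rep T$ as a direct sum $\bigoplus_w \Rep(wT_j)$—fails, and not only integrally. Take $G = \SU(2)$ and $T_j = T$; writing $\Rep G = \ZZ[t]$ with $t \mapsto x + x^{-1}$ in $\Rep T = \ZZ[x^{\pm 1}]$, one computes
\[
\Rep T \otimes_{\Rep G} \Rep T \;\cong\; \ZZ[x^{\pm 1}, y^{\pm 1}]\big/\bigl((x-y)(xy-1)\bigr).
\]
The factors $(x-y)$ and $(xy-1)$ cut out the graphs of the two Weyl elements, but they are not comaximal (both vanish at $x=y=\pm 1$), so this ring is not a product of two copies of $\Rep T$, even over~$\QQ$. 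The underlying reason is that $\Spec \Rep T \to \Spec \Rep G$ is not \'etale: the $\Weyl$-action on $\Spec \Rep T$ has fixed points (for instance the augmentation ideal $\I T$), so ``$\Weyl$-Galois descent'' in the sense you need is simply unavailable. You correctly flag this step as the main obstacle, but your diagnosis—integrality, or $H_j$ not good—is off; the problem lies in the ambient extension $\Rep G \hookrightarrow \Rep T$, and treating the $H_j$ as tori first does not touch it. A further symptom that something is wrong: the ``final estimate'' you write down uses only the definition of $\interrank{H_1}{H_2}$, never the hypothesis that the intersections are central, so if your argument worked it would prove the theorem with no biquotient hypothesis whatsoever.

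The paper's proof avoids the non-\'etaleness by localizing rather than base-changing globally. At a prime $\ideal p \in \support[\Rep G]{\Rep(\centre G)}$ there is a \emph{unique} prime of $\Rep T$ lying over $\ideal p$ (\Cref{supports}), and this is what makes a Koszul argument go through; a ``reduction to the diagonal'' (\Cref{diagonal-reformulation}) first replaces $(G,H_1,H_2)$ by $(G\times G, T_1\times T_2, \Delta G)$ so that one of the two modules is visibly cut out of $\Rep(G\times G)$ by a regular sequence, and Segal's theory of supports is then used to bound the dimension of the combined Koszul quotient (\Cref{maximal-rank-particular-subgroups}). The general-rank case is reached by enlarging one torus (\Cref{enlarging-tori}), a step that in general destroys the biquotient condition—hence \Cref{arbitrary-subgroups} is stated only as a \emph{local} vanishing at centrally-supported primes. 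The lax biquotient condition is invoked only at the very end, to ensure that every prime in $\support[\Rep G]{\Rep H_1}\cap\support[\Rep G]{\Rep H_2}$ already lies in $\support[\Rep G]{\Rep(\centre G)}$.
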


We say a few words about the proof.  Despite the close analogy of the statements, there is very little relation of our proof of \Cref{thm:Tor-of-pairs} with Singhof's proof of the result cited above, which makes crucial use of the grading in cohomology.

We first establish \Cref{thm:Tor-of-pairs} in the maximal rank case, \ie when \(\rank G = \rank H_1 + \rank H_2 - \interrank{H_1}{H_2}\).
When the ambient group \(G\) is a torus, this case can be verified by a direct calculation using the spectral sequence of a double complex.  The reader may easily reconstruct this calculation by stripping away all unnecessary complications from the proof of \Cref{maximal-rank-particular-subgroups} below.  It is also easy, for general \(G\), to reduce to the case that \(H_1\) and \(H_2\) are tori; see \Cref{red:torus}.  However, we have found no simple argument for passing from the statement for ambient tori to the statement for more general ambient groups~\(G\).  Instead, we try to salvage as much as possible from the actual calculation for tori, and generalize the whole argument.  One key ingredient is a “reduction to the diagonal”, which allows us to assume that \(\Rep{H_2}\) is a quotient of \(\Rep{G}\) by a regular sequence (see \Cref{diagonal-reformulation} and the proof of \Cref{maximal-rank-tori}).

The passage from the maximal rank case to the general case is achieved by enlarging \(H_1\) (see \Cref{enlarging-tori} and the proof of \Cref{arbitrary-subgroups}).
However, in general, this enlargment process destroys the biquotient condition (\Cref{enlargement-destroys-biquotient-condition}).
This forces us to prove \Cref{thm:Tor-of-pairs} in somewhat greater generality than stated above: we show that, for arbitrary pairs of connected subgroups, the relevant Tor groups vanish when localized at primes supporting the representation ring \(\Rep{(\centre{G})}\)  of the centre of \(G\) (see \Cref{arbitrary-subgroups}).
The key for extracting \Cref{thm:Tor-of-pairs} from this result is Segal's theory of supports of prime ideals in representation rings \cite{Segal:RG}, which allows us to translate the biquotient condition into commutative algebra (\Cref{supports}): the lax biquotient condition implies that the only primes of \(\Rep G\) supporting both \(\Rep H_1\) and \(\Rep H_2\) are those primes also supporting \(\Rep{(\centre{G})}\) (\Cref{eg:lax-biquotient-condition-translated-into-commutative-algebra}).  Thus, we may deduce that the relevant Tor groups vanish by arguing one prime \(\ideal p\) at a time:  if \(\ideal p\) supports \(\Rep{(\centre{G})}\), the Tor groups vanish at \(\ideal p\) by the more general result; if not, they vanish because at least one of \(\Rep{H_1}\) and \(\Rep{H_2}\) do so.

\paragraph{Acknowledgements}
I am very grateful to Wilhelm Singhof for asking me whether \Cref{thm:Tor-of-pairs-simplified} holds, and to David González-Álvaro and Panagiotis Konstantis for repeatedly motivating me to come up with an answer.  Jeff Carlson and Oliver Goertsches shared their thoughts on various preliminary approaches to the problem.
The special case of \Cref{thm:Tor-of-pairs-simplified} in which \(\rank H_1 = 1\) was established by Alessandra Wiechers in her Master's thesis \cite{Wiechers}, already using Segal's theory of supports.  Leopold Zoller outlined a strategy to me by which the first isomorphism of \Cref{K-of-biquotients} might alternatively be established in the case when \(G\) is a product of special unitary and symplectic groups, and \(H\) is of maximal rank, starting from the cohomological result of Singhof.
Jakob Bergqvist shared some helpful insights regarding \Cref{RG-biequidimensional-for-connected-G}.
Our use of the “reduction to the diagonal” is inspired by \cite[Chapter~V.C, \S\,4]{Serre:LA}.
Finally, I would like to thank the anonymous referee for careful reading and meticulous reports that have led to numerous improvements.

\vfill
\begin{@empty}
  \small
\renewcommand{\contentsname}{Overview}
\tableofcontents
\end{@empty}
\vfill
\newpage
\section{Representation rings}
We collect here assorted facts about complex representation rings of compact Lie groups that we will need later.

\subsection{Abelian groups}
For compact abelian Lie groups, the functor \(\Rep\) that takes a compact abelian group to its complex representation ring can be factored into a composition of two functors as follows:
\[
  \left(\ctext{6em}{compact abelian Lie~groups}\right) \xrightarrow[\Char{-}]{\simeq}
  \left(\ctext{6em}{finitely generated abelian groups}\right) \xrightarrow[{\ZZ[-]}]{}
  \left(\ctext{6em}{finitely generated commutative \(\ZZ\)-algebras}\right)
\]
The first functor, which takes a compact abelian Lie group \(S\) to its character group \(\Char{S}\), is a contravariant equivalence by Pontryagin duality.  The second functor takes an abelian group \(A\) to the associated group ring \(\ZZ[A]\).  In other words, \(\Rep{S}\cong \ZZ[\Char{S}]\) for any compact abelian Lie group~\(S\).  For example, when \(T\) is a torus of rank~\(r\), \ie when \(T\cong (S^1)^r\), we have \(\Hom(T,S^1) \cong \ZZ^r\).  So \(\Rep{T}\) is isomorphic to \(\ZZ[x_1^{\pm 1},\dots,x_r^{\pm 1}]\), a Laurent ring on \(r\) generators.
We will need the following consequences of this explicit description.

\begin{lem}\label{subgroups-of-tori}
  Consider a torus \(T\).
  \begin{compactenum}[(a)]
  \item For an arbitrary closed subgroup \(S\subseteq T\), the restriction \(\Rep{T}\to \Rep{S}\) is surjective.
    \label{subgroups-of-tori:restriction-surjective}
  \item Any subtorus \(S\subseteq T\) is a direct factor.  For such a subtorus \(S\), the kernel of the restriction \(\Rep{T}\to\Rep{S}\) is generated by a regular sequence of length \(\rank{T} -\rank{S}\).
    \label{subgroups-of-tori:restriction-to-subtori}
    \label{subgroups-of-tori:subtori-are-direct-factors}
  \item
    There is an inclusion-reversing bijection between subgroups \(S\) of \(T\) and subgroups \(K_S\) of \(\ZZ^r\), where \(r := \rank T\), defined as follows:
    Identify \(\Char{T}\) with \(\ZZ^r\).  Given a subgroup \(S\subseteq T\), define the associated subgroup of \(\ZZ^r\) as \( K_S := \ker(\Hom(T,S^1)\to \Hom(S,S^1))\).

    Under this correspondence, \(\rank{K_S} = r - \rank{S}\), the subtori of \(T\) correspond to the direct summands of \(\ZZ^r\), and \(K_{S_1\cap S_2} = K_{S_1} + K_{S_2}\).
    \label{subgroups-of-tori:bijection}

    \qed
  \end{compactenum}
\end{lem}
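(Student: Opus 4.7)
The plan is to reduce each claim to a statement about the character lattice via the identification $\Rep S \cong \ZZ[\Char S]$ given above, so that the lemma becomes a formal consequence of Pontryagin duality and the structure theory of finitely generated abelian groups.

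For part~(a), I would start from the short exact sequence $1 \to S \to T \to T/S \to 1$ of compact abelian Lie groups and Pontryagin dualise to obtain an exact sequence $0 \to \Char{T/S} \to \Char T \to \Char S \to 0$. Surjectivity of the second map transfers to surjectivity of $\ZZ[\Char T] \to \ZZ[\Char S]$ because $\ZZ[-]$ preserves surjections of abelian groups.

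For part~(b), the same short exact sequence splits whenever $S$ is a subtorus, since then $\Char S$ is free abelian. Dualising a splitting yields a product decomposition $T \cong S \times T'$ with $T'$ a torus of rank $k := r - \rank S$, which on representation rings becomes $\Rep T \cong \Rep S[y_1^{\pm 1}, \dots, y_k^{\pm 1}]$; the restriction kernel is then the ideal $(y_1 - 1, \dots, y_k - 1)$, which is manifestly generated by a regular sequence.

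For part~(c), the bijection itself is the standard annihilator correspondence between closed subgroups of $T$ and subgroups of $\Char T \cong \ZZ^r$, with inverse $K \mapsto \{t \in T : \chi(t) = 1 \text{ for all } \chi \in K\}$. The rank formula $\rank K_S = r - \rank S$ follows from the short exact sequence of part~(a) together with $\rank \Char S = \rank S$ (the torsion part of $\Char S$ contributes no rank). Subtori correspond to direct summands exactly as in part~(b): $S$ is a subtorus iff $\Char S$ is torsion-free iff the inclusion $K_S \hookrightarrow \ZZ^r$ splits. For the intersection formula, I would use that the natural map $T/(S_1 \cap S_2) \hookrightarrow T/S_1 \times T/S_2$ is injective by the very definition of the intersection, Pontryagin dualise to obtain a surjection $\Char{T/S_1} \oplus \Char{T/S_2} \twoheadrightarrow \Char{T/(S_1 \cap S_2)}$, and read off $K_{S_1 \cap S_2} = K_{S_1} + K_{S_2}$ by identifying each summand with its image in $\Char T$.

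The whole argument is formal and I do not anticipate any real obstacle; each claim is visible at the level of character lattices. The only point that might look delicate is the intersection identity, but its entire content is the injectivity of $T/(S_1 \cap S_2) \hookrightarrow T/S_1 \times T/S_2$, which is immediate.
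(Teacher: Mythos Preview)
Your proposal is correct and is precisely the argument the paper has in mind. Note that the paper supplies no proof at all for this lemma --- the statement ends with a \texttt{\textbackslash qed}, signalling that each claim is an immediate consequence of the Pontryagin duality/group ring description set up just before it --- and your write-up simply unpacks those consequences in the expected way.
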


\subsection{Arbitrary compact groups}

\begin{defn}\label{def:rank}
  We define the \textbf{rank} of a compact Lie group \(G\) as the rank of any maximal torus contained in \(G\).
\end{defn}
Textbook sources such as \cite{BtD:Lie,Adams:LieLectures}) restrict this definition to connected groups, but there is no harm in extending it to disconnected groups.  Since any torus in \(G\) is necessarily contained in the identity component of \(G\), we are effectively defining the rank of \(G\) as the rank of this connected component.  In our main result, \(G\) and the subgroups \(H_i\) are assumed connected, but the intersections of \(H_1\) with (conjugates of) \(H_2\) may not be.

\begin{lem}\label{RG-for-general-G}
  For any compact Lie group \(G\):
  \begin{compactenum}[(a)]
  \item \(\Rep G\) is a free as an abelian group.
  \item \(\Rep G\) is finitely generated as a \(\ZZ\)-algebra, hence, in particular, a noetherian ring.
    \label{RG-for-general-G:noetherian}
  \item \(\dim \Rep G = \rank G + 1\)
    \label{RG-for-general-G:dimension}
  \end{compactenum}
\end{lem}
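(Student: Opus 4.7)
For part (a), the plan is to invoke complete reducibility of complex representations of compact Lie groups, proved by averaging a Hermitian form with respect to Haar measure: every finite-dimensional complex representation of \(G\) decomposes uniquely as a direct sum of irreducibles, so \(\Rep G\) is the free abelian group on the set of isomorphism classes of irreducible complex representations.

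For parts (b) and (c), I will first reduce to the case that \(G\) is connected. The quotient \(G/G^0\) is finite, and a standard Clifford-theoretic argument (using that every irreducible of \(G\) occurs as a summand of \(\mathrm{Ind}_{G^0}^G V\) for some irreducible \(V\) of \(G^0\)) shows that \(\Rep G\) is module-finite over \(\Rep{G^0}\). Granted (b) and (c) for connected groups, finite generation as a \(\ZZ\)-algebra transfers from \(\Rep{G^0}\) to \(\Rep G\) via the Artin--Tate lemma, yielding Noetherianness via Hilbert's basis theorem, and integrality of the extension preserves Krull dimension, so \(\dim \Rep G = \dim \Rep{G^0} = \rank G^0 + 1 = \rank G + 1\), using the author's observation that \(\rank G = \rank G^0\) by the chosen definition.

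For connected \(G\), the crucial classical input is the Weyl character theorem, which identifies the restriction \(\Rep G \to \Rep T\) along a maximal torus \(T\) with an isomorphism onto the Weyl-invariant subring \(\Rep T^W\). Since \(W\) is finite, every \(a \in \Rep T\) satisfies the monic polynomial \(\prod_{w \in W}(x - w\cdot a)\) over \(\Rep T^W\), so the extension \(\Rep T^W \subseteq \Rep T\) is integral. Using the Laurent description \(\Rep T \cong \ZZ[x_1^{\pm 1},\ldots,x_r^{\pm 1}]\) from the preceding subsection, \(\Rep T\) is finitely generated as a \(\ZZ\)-algebra, hence finitely generated over \(\Rep T^W\), and integrality then upgrades this to module-finiteness; Artin--Tate forces \(\Rep T^W = \Rep G\) to be finitely generated over \(\ZZ\) as well, establishing (b). For (c), integral extensions preserve Krull dimension, so \(\dim \Rep G = \dim \Rep T\), and one computes \(\dim \ZZ[x_1^{\pm 1},\ldots,x_r^{\pm 1}] = r+1\) from the chain \((0) \subset (p) \subset (p,x_1-1) \subset \cdots \subset (p,x_1-1,\ldots,x_r-1)\) combined with the upper bound inherited from \(\ZZ[x_1,\ldots,x_r]\) via localization.

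The main obstacle I anticipate is the identification \(\Rep G \cong \Rep T^W\) for connected \(G\); this is a deep classical result equivalent to the Weyl character formula, which I would quote rather than re-derive. The reduction from the disconnected to the connected case is standard but requires some Clifford-theoretic bookkeeping to pin down module-finiteness of \(\Rep G\) over \(\Rep{G^0}\); everything downstream is elementary commutative algebra together with the explicit torus computation.
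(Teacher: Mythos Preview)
Your treatment of part (a) and of the connected case in parts (b) and (c) is correct and essentially matches the paper: for connected \(G\) the paper likewise reduces to the Laurent ring via the finite extension \(\Rep G \cong (\Rep T)^{\Weyl} \hookrightarrow \Rep T\).

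The gap is in your reduction from general \(G\) to the identity component \(G^0\). The assertion that ``\(\Rep G\) is module-finite over \(\Rep{G^0}\)'' does not make sense as written: there is no natural ring homomorphism \(\Rep{G^0} \to \Rep G\) (induction is only additive, not multiplicative), so \(\Rep G\) carries no natural \(\Rep{G^0}\)-module structure. What Clifford theory, or Segal's Proposition~3.2, actually gives is the reverse direction: restriction \(\Rep G \to \Rep{G^0}\) makes \(\Rep{G^0}\) module-finite over \(\Rep G\). But this restriction map is not injective when \(G\) is disconnected---already for \(G = \ZZ/2\) the trivial and sign characters have the same restriction to \(G^0 = \{1\}\)---so you cannot place \(\Rep G\) between \(\ZZ\) and \(\Rep{G^0}\) and invoke Artin--Tate for (b), and for (c) you obtain only the inequality \(\dim \Rep G \geq \dim \Rep{G^0}\), not equality.

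The paper handles the disconnected case differently, citing Segal directly for (b), and for (c) using Segal's result that \(\Rep G\) sits as a finite extension inside a finite product \(\prod_S \Rep S\) over certain compact abelian subgroups \(S\) (Segal's ``Cartan subgroups''), at least one of which---a maximal torus of \(G^0\)---has rank equal to \(\rank G\). This genuinely injective finite extension supplies the missing upper bound \(\dim \Rep G \leq \rank G + 1\).
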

\begin{proof}
  The first observation follows directly from the definition of \(\Rep G\) and is only mentioned here because we will implicitly use it when applying~\Cref{Tor-CoR-II} later.
  For (\labelcref{RG-for-general-G:noetherian}), see \cite[Corollary~3.3]{Segal:RG}.

  For (\labelcref{RG-for-general-G:dimension}), consider first a compact abelian group \(S\).  Pullback along the projection to the identity component \(S\twoheadrightarrow S_0\) defines a finite ring extension \(\Rep{S_0}\hookrightarrow \Rep{S}\).  So \(\dim{\Rep{S}} = \dim{\Rep{S_0}}\). As \(\Rep{S_0} = \ZZ[x_1^{\pm 1}, \dots,x_r^{\pm 1}]\) with \(r = \rank S_0 = \rank S\), the claim follows in this case.

  For any compact connected \(G\), the restriction to a maximal torus \(T\) defines an integral extension \(\Rep{G}\to\Rep{T}\) (see \Cref{RG-for-connected-G} below). So the result for \(G\) follows from the result for \(T\) which we have just established.
  For general compact~\(G\), we similarly have an integral extension \(\Rep{G} \to \prod_S \Rep{S}\), where the product is over a finite number of compact abelian groups \(S\) \cite[proof of Proposition~(3.2)]{Segal:RG}.  So \(\dim\Rep{G} = \max_S \dim\Rep{S}\). More precisely, the groups \(S\) range over all conjugacy classes of what Segal calls ``Cartan subgroups'' of \(G\) (though note that this terminology conflicts with modern usage).  Not every such Cartan subgroup is of maximal rank, as the example in the final remark of \cite[\S\,1]{Segal:RG} shows.  However, any maximal torus of the identity component of \(G\) is a Cartan subgroup, so at least one of the groups \(S\) has maximal rank, and the claim follows.
\end{proof}

\begin{lem}\label{RH-finite-over-RG}
  For any subgroup \(H\) of a compact Lie group \(G\), \(\Rep H\) is finitely generated as an \(\Rep G\)-module.
\end{lem}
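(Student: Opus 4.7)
The strategy is to embed \(\Rep H\) as an \(\Rep G\)-submodule of some finitely generated \(\Rep G\)-module and then invoke Noetherianity of \(\Rep G\) (\Cref{RG-for-general-G}): over a Noetherian ring, submodules of finitely generated modules are again finitely generated. The task thus reduces to locating a suitable ambient finitely generated module for \(\Rep H\).

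Suppose first that \(H\) is connected. Choose a maximal torus \(T_H\subseteq H\) and extend it to a maximal torus \(T_G\) of \(G\), which is always possible since any torus in \(G\) lies in a maximal torus of its identity component. Characters of a connected compact Lie group are determined by their restriction to a maximal torus, so \(\Rep H\hookrightarrow\Rep{T_H}\) as rings, and in particular as \(\Rep G\)-modules. By \Cref{subgroups-of-tori}(\labelcref{subgroups-of-tori:restriction-surjective}) the further restriction \(\Rep{T_G}\twoheadrightarrow\Rep{T_H}\) is surjective, so \(\Rep{T_H}\) is cyclic over \(\Rep{T_G}\); it is thus enough to show that \(\Rep{T_G}\) is itself finitely generated over \(\Rep G\). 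This last claim is standard: the ring \(\Rep{T_G}\) is finitely generated as a \(\ZZ\)-algebra, hence as an \(\Rep G\)-algebra, while the extension \(\Rep G\hookrightarrow\Rep{T_G}\) is integral, since every \(x\in\Rep{T_G}\) satisfies the monic polynomial \(\prod_{w\in\Weyl}(t-wx)\) over the Weyl invariants \(\Rep{T_G}^{\Weyl}\), which in turn is a finite extension of \(\Rep G\). An integral extension that is finitely generated as an algebra is automatically finitely generated as a module, completing the connected case.

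For general (possibly disconnected) \(H\), let \(H_0\) denote the identity component. By transitivity of finite generation, it suffices to check that \(\Rep H\) is finitely generated over \(\Rep{H_0}\), and since \([H:H_0]\) is finite, this is classical Clifford/Mackey theory applied to a finite-index inclusion: every irreducible representation of \(H\) restricts to at most \([H:H_0]\) conjugate \(H_0\)-irreducibles, and a finite set of induction images suffices to generate \(\Rep H\) as an \(\Rep{H_0}\)-module. The step requiring the most care is the integrality of \(\Rep G\hookrightarrow\Rep{T_G}\) in the connected case — specifically, the finiteness of \(\Rep{T_G}^{\Weyl}\) over \(\Rep G\), which is Steinberg's theorem when \(G\) is good and requires a small separate argument otherwise; once that is granted, the remainder of the proof is a clean combination of the already collected facts about tori and Noetherianity.
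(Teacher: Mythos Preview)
The paper's own proof is a one-line citation of \cite[Proposition~3.2]{Segal:RG}, so you are attempting what the paper does not: an explicit argument.  The connected-\(H\) outline is sound, but two points need correction.

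First, a misattribution.  For connected \(G\) one has \(\Rep G=(\Rep{T_G})^{\Weyl}\) on the nose; this is the classical fact recorded in \Cref{RG-for-connected-G}, not Steinberg's theorem.  Steinberg's contribution is the \emph{freeness} of \(\Rep{T_G}\) over \(\Rep G\) for good \(G\), which you do not need here---integrality via the monic polynomial \(\prod_{w\in\Weyl}(t-wx)\) already suffices.  So the step you flag as ``requiring the most care'' is routine once \(G\) is connected.  (For disconnected \(G\) the Weyl-invariant description of \(\Rep G\) fails and your argument, as written, says nothing; the lemma allows this case.)

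Second, and more seriously, the disconnected-\(H\) step is ill-posed.  You assert that ``\(\Rep H\) is finitely generated over \(\Rep{H_0}\)'' and invoke transitivity, but transitivity requires a ring homomorphism \(\Rep{H_0}\to\Rep H\) through which the \(\Rep G\)-structure on \(\Rep H\) factors.  No such map exists: restriction goes the other way, \(\Rep H\to\Rep{H_0}\), and induction \(\Rep{H_0}\to\Rep H\) is merely additive, not a ring map.  The phrase ``\(\Rep H\) as an \(\Rep{H_0}\)-module'' therefore has no natural meaning, and Clifford theory---which describes how \(H\)-irreducibles decompose over \(H_0\)---does not manufacture a module structure in the required direction.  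A correct route, and essentially what Segal does, is to show directly that every element of \(\Rep H\) is integral over the image of \(\Rep G\), and then combine this with the finite generation of \(\Rep H\) as a \(\ZZ\)-algebra (\Cref{RG-for-general-G}\,(\labelcref{RG-for-general-G:noetherian})); that argument does not pass through the finite-index inclusion \(H_0\subseteq H\) in the way you suggest.
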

\begin{proof}
  This is \cite[Proposition~3.2]{Segal:RG}.
\end{proof}

\begin{rem}\label{rem:Rep-of-conjugate-subgroups}
  The representation ring \(\Rep{H'}\) of a subgroup \(H'\) conjugate to \(H\) is isomorphic to \(\Rep{H}\) as an \(\Rep G\)-module.
  This follows from the fact that conjugation by an inner automorphism induces the identity on \(\Rep{G}\).
\end{rem}

\subsection{Connected groups}\label{sec:connected-groups}
We now restrict our attention to compact \emph{connected}~\(G\).
Recall that all maximal tori of \(G\) are conjugate, and that any element of \(G\) is contained in a maximal torus \cite[Corollaries~4.22 \& 4.23]{Adams:LieLectures}.
Given a maximal torus \(T\subseteq G\), the \textbf{Weyl group} \(\Weyl = \Weyl(G,T)\) is defined as \(\Weyl := (\normalizer[G]{T})/T\).  This is always a finite group. It acts on \(\Rep{T}\) by conjugation, and we denote by \((\Rep{T})^\Weyl\) the subring of fixed points under this action.

\begin{lem}\label{RG-for-connected-G}
  Consider a compact connected Lie group \(G\) with maximal torus \(T\) and associated Weyl group \(\Weyl\).
  The restriction \(i^*\colon \Rep G\to \Rep T\) is a finite extension which restricts to an isomorphism  \(\Rep{G}\cong (\Rep{T})^\Weyl\).
  In particular:
  \begin{compactenum}[(a)]
  \item
    \(\Rep{G}\) is an integral domain.
    \label{RG-for-connected-G:integral-domain}
  \item The set of prime ideals of \(\Rep T\) lying over a fixed prime ideal \(\ideal p \subset \Rep G\) is equal to a single \(\Weyl\)-orbit of a prime in \(\Rep T\).
    \label{RG-for-connected-G:RT-primes}
  \item
    A prime ideal \(\ideal p_T\subset \Rep T\) lying over a given prime ideal \(\ideal p \subset \Rep G\) is unique with this property if and only if it is \(\Weyl\)-invariant.
    In this case, it contains every other prime ideal \(\ideal q \subset \Rep T\) with \((i^*)^{-1}\ideal q \subseteq \ideal p\), and
    \((\Rep T)_{\ideal p}\) is a local ring with maximal ideal \(\ideal p_T\cdot (\Rep T)_{\ideal p}\).
    \label{RG-for-connected-G:max-RT-primes}
  \end{compactenum}
\end{lem}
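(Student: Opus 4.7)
The central fact to establish is the isomorphism $\Rep G \cong (\Rep T)^\Weyl$ together with the finiteness of the extension $\Rep G \hookrightarrow \Rep T$; the three items (a)--(c) then follow from standard commutative algebra. For the main assertion, restriction along $T\hookrightarrow G$ lands in $(\Rep T)^\Weyl$ because characters of $G$-representations are conjugation-invariant and the residual action of $\normalizer[G]{T}$ on $T$ descends to $\Weyl$. Injectivity of $\Rep G \to (\Rep T)^\Weyl$ follows from the fact that every conjugacy class of $G$ meets $T$, so the character of a $G$-representation is determined by its restriction to $T$. Surjectivity is the substantive point and is a consequence of the Weyl character formula, which exhibits each $\Weyl$-invariant element of $\Rep T$ as a $\ZZ$-linear combination of irreducible characters of $G$. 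For finiteness, $\Rep T$ is a finitely generated Laurent ring, hence (by the classical fact that a finitely generated algebra is module-finite over its ring of invariants under a finite group action) module-finite over $(\Rep T)^\Weyl = \Rep G$.

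Part (a) is immediate: $\Rep T$ is a Laurent polynomial ring, hence an integral domain, and $\Rep G$ is identified with a subring. Part (b) is the classical theorem that whenever a finite group $\Weyl$ acts on a ring $B$ with fixed ring $A = B^\Weyl$, the extension $A\subseteq B$ is integral and $\Weyl$ acts transitively on the set of primes of $B$ lying over any fixed prime of $A$. Applied to our setting, this gives the asserted $\Weyl$-orbit description of the fibre.

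For (c), the equivalence of uniqueness of $\ideal p_T$ and its $\Weyl$-invariance is immediate from the transitivity in (b). Assume $\ideal p_T$ is $\Weyl$-invariant and hence the unique prime over $\ideal p$, and let $\ideal q\subset \Rep T$ be any prime with $(i^*)^{-1}\ideal q \subseteq \ideal p$. By going-up for the integral extension $\Rep G \hookrightarrow \Rep T$, we may enlarge $\ideal q$ to a prime $\ideal q'$ of $\Rep T$ contracting to $\ideal p$; uniqueness forces $\ideal q' = \ideal p_T$, so $\ideal q \subseteq \ideal p_T$. The local ring claim then follows: the primes of $(\Rep T)_{\ideal p}$ correspond bijectively to primes of $\Rep T$ whose contraction to $\Rep G$ is contained in $\ideal p$, and we have just shown that every such prime lies inside $\ideal p_T$; hence $\ideal p_T\cdot (\Rep T)_{\ideal p}$ is the unique maximal ideal.

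The main obstacle in the whole argument is the surjectivity of $\Rep G\to (\Rep T)^\Weyl$, which requires genuine representation-theoretic input (the Weyl character formula, or equivalently the surjectivity of characters); once that and integrality are in place, everything else is routine commutative algebra of finite group actions and integral extensions.
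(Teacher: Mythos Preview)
Your proof is correct and follows essentially the same approach as the paper: the paper simply cites Adams for the isomorphism \(\Rep G\cong(\Rep T)^\Weyl\) and Segal for finiteness, then derives (a) from \(\Rep T\) being a domain, (b) from the standard fact that \(\Spec(A^\Weyl)=\Spec(A)/\Weyl\) for a finite group action, and (c) from (b) together with the observation that the maximal ideals of \((\Rep T)_{\ideal p}\) correspond to the primes of \(\Rep T\) over \(\ideal p\). Your version is more self-contained---you sketch the Weyl-character-formula argument for surjectivity and spell out the going-up step in (c)---but the logical structure is identical.
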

\begin{proof}
  For the identification of \(\Rep G\) with \((\Rep{T})^\Weyl\), see for example \cite[Theorem~6.20]{Adams:LieLectures}.
  For the finiteness of \(i^*\), see \Cref{RH-finite-over-RG} above.  Alternatively, use the identification of \(\Rep G\) with \((\Rep{T})^\Weyl\) and \cite{GoertzWedhorn:AGI}*{Proposition~12.27~(4)}.

  Claim~(\labelcref{RG-for-connected-G:integral-domain}) is immediate from the fact that \(\Rep{T}\) is an integral domain and the injectivity of~\(i^*\).

  For claim~(\labelcref{RG-for-connected-G:RT-primes}), see for example \cite{GoertzWedhorn:AGI}*{Proposition~12.27~(2) and (3)}. The claim spells out the general fact that, for any commutative unital ring \(A\), and any action of a finite group \(\Weyl\) through ring homomorphisms on \(A\), the prime spectrum \(\Spec(A^\Weyl)\) of the fixed ring can be naturally identified with \(\Spec(A)/\Weyl\), the \emph{set-theoretic} quotient of the prime spectrum of \(A\).  (The same statement is true in the category of affine schemes, in fact more obviously so -- see \cite{GoertzWedhorn:AGI}*{Proposition~12.27~(1)}. But it is the set-theoretic statement that we will need later.)

  The first part of claim~(\labelcref{RG-for-connected-G:max-RT-primes}) follows immediately from (\labelcref{RG-for-connected-G:RT-primes}).  The second part follows from the first and the going-up property, which is shared by all finite (or, more generally, integral) extensions. Finally, note that the maximal primes of \((\Rep T)_{\ideal p}\) correspond to the primes of \(\Rep T\) that lie over \(\ideal p\).
\end{proof}
\begin{example}\label{eg:max-ideals-of-RG}
  Let us write \(\I G\) for the augmentation ideal, \ie for the kernel of the rank homomorphism \(\rank\colon \Rep G \to \ZZ\).
  Clearly, \(\I T\) is a prime ideal of \(\Rep{T}\) lying over \(\I G\), and it is \(\Weyl\)-invariant.
  So \Cref{RG-for-connected-G} shows that \((\Rep T)_{\I G}\) is a local ring with maximal ideal \(\I T\cdot (\Rep T)_{\I G}\).

  Similarly, for an arbitrary prime \(p\in\ZZ\), we can consider the ideal \(\Ip G := \rank^{-1}((p))\).
  Again, \(\Ip T\) is a \(\Weyl\)-invariant prime ideal lying over \(\Ip G\), so \((\Rep T)_{\Ip G}\) is a local ring with maximal ideal \(\Ip T\cdot (\Rep T)_{\Ip G}\).
\end{example}

The inclusion of \(\Rep{G}\) into \(\Rep{T}\) moreover splits:
\begin{thm}[Holomorphic induction]\label{holomorphic-induction}
  For any compact connected Lie group \(G\) with maximal torus \(i\colon T \hookrightarrow G\), we have a morphism of \(\Rep G\)-modules \(i_*\colon \Rep T \to \Rep G\) such that \(i_*i^* = \id\).
\end{thm}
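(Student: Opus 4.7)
My plan is to define \(i_*\) by \emph{holomorphic induction}, using the natural \(G\)-invariant complex structure on \(G/T\) coming from the diffeomorphism \(G/T \cong G_\CC/B\), where \(G_\CC\) is the complexification of \(G\) and \(B \supset T_\CC\) is a Borel subgroup. For each character \(\lambda \in \Char T\), extending \(\lambda\) trivially on the unipotent radical of \(B\) produces a holomorphic \(G\)-equivariant line bundle \(L_\lambda := G_\CC \times_B \CC_\lambda\) on \(G/T\), whose sheaf-cohomology groups \(\H^k(G/T, L_\lambda)\) are finite-dimensional \(G\)-representations. I then set
\[
  i_*(\lambda) := \sum_{k \geq 0}(-1)^k \bigl[\H^k(G/T, L_\lambda)\bigr] \in \Rep G
\]
and extend \(\ZZ\)-linearly to all of \(\Rep T\).

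To check that \(i_*\) is \(\Rep G\)-linear, I would invoke the projection formula. Under the standard identification \(\Rep T \cong \K_G(G/T)\) sending \(\lambda \mapsto [L_\lambda]\) (obtained by taking the fiber at \(eT\) as a \(T\)-representation), the restriction \(i^* \colon \Rep G \to \Rep T\) corresponds to the pullback \(\pi^*\) along \(\pi\colon G/T \to \mathrm{pt}\), and the map \(i_*\) defined above is exactly the \(G\)-equivariant holomorphic pushforward \(\pi_*\). The projection formula \(\pi_*(\pi^* y \cdot x) = y \cdot \pi_*(x)\) in equivariant K-theory then yields \(\Rep G\)-linearity with no further work.

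Finally, I would verify the normalization \(i_*(1) = 1\), where \(1 \in \Rep T\) corresponds to the trivial bundle \(L_0 = \mathcal{O}_{G/T}\). Since \(G/T\) is a simply connected compact Kähler manifold whose complex cohomology is of pure Hodge type \((p,p)\), one has \(\H^0(G/T, \mathcal{O}) = \CC\) (the trivial representation) and \(\H^k(G/T, \mathcal{O}) = 0\) for \(k \geq 1\); this is also the simplest instance of the Borel--Weil--Bott theorem. Combined with the \(\Rep G\)-linearity established above, this immediately gives \(i_* i^*(V) = V \cdot i_*(1) = V\) for all \(V \in \Rep G\).

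The only nontrivial input is the Bott vanishing \(\H^k(G/T, \mathcal{O}) = 0\) for \(k > 0\); everything else is formal, given the existence of the equivariant holomorphic pushforward. Note that the argument requires no hypothesis on \(\pi_1(G)\): both the complex structure on \(G/T\) and the Hodge-theoretic vanishing are available for any compact connected Lie group.
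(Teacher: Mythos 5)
Your argument is correct and is essentially the content of the references the paper cites for this result (Atiyah--Bott, Remark~1 after Proposition~4.9, and Eastwood--Salomon, \S\,3); the paper itself does not reprove the theorem but simply points to those sources. The three ingredients you identify — realizing \(i_*\) as the equivariant pushforward \(\pi_!\) along \(G/T\to\mathrm{pt}\) via the Dolbeault/Euler characteristic \(\sum_k(-1)^k[\H^k(G/T,L_\lambda)]\), using the projection formula \(\pi_!(\pi^*y\cdot x)=y\cdot\pi_!(x)\) to get \(\Rep G\)-linearity for free, and the normalization \(i_*(1)=1\) from \(\H^k(G/T,\mathcal O)=0\) for \(k>0\) — are exactly the standard argument. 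You are also right that no hypothesis on \(\pi_1(G)\) is needed, which is consistent with the paper stating the theorem for arbitrary compact connected \(G\). One small expository remark: the vanishing \(\H^{k}(G/T,\mathcal O)=0\) for \(k>0\) can be obtained without appealing to Hodge purity of \((p,p)\)-type, since \(G/T\cong G_\CC/B\) is a smooth projective rational variety, for which \(\H^k(X,\mathcal O_X)=0\) in positive degrees is a standard consequence of birational invariance of \(h^{0,k}\); but the Hodge-theoretic route you give (or the special case of Borel--Weil--Bott) is equally valid.
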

\begin{proof}
See  \cite{Atiyah:Bott}*{Remark~1 after Proposition~4.9} or \cite[\S\,4]{Segal:RG}, where \(i_*\) is written as \(i_c\) and the identity \(i_c i^* = \id\) follows from Proposition~4.1\,(iv).
\end{proof}

\begin{lem}\label{RG-biequidimensional-for-connected-G}
The representation ring \(\Rep{G}\) of any compact connected \(G\) is biequidimensional, in the strong sense that all saturated chains of prime ideals have equal length, and hence satisfies the dimension formula:  for any prime ideal \(\ideal p\subset \Rep G\), \[\dim \Rep G = \dim{\Rep G/\ideal p} + \dim{(\Rep G)_{\ideal p}}.\]
\end{lem}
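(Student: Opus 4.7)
The plan is to exploit the finite integral extension $\Rep{G}\hookrightarrow \Rep{T}$ from \Cref{RG-for-connected-G}, where $T$ is a maximal torus of $G$, in order to transfer the desired conclusions from $\Rep{T}$ down to $\Rep{G}$. For $\Rep{T}$ itself, the isomorphism $\Rep{T}\cong \ZZ[x_1^{\pm 1},\dots,x_r^{\pm 1}]$ realises it as a localization of $\ZZ[x_1,\dots,x_r]$, hence as a regular (and in particular Cohen--Macaulay and catenary) domain of dimension $r+1 = \dim\Rep{G}$. Moreover, since every field that is finitely generated as a ring is finite, every maximal ideal of $\Rep{T}$ must contain a rational prime $p\in\ZZ$, and is therefore of the form $(p,f_1,\dots,f_r)$ with height $r+1$. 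A catenary Noetherian domain in which all maximal ideals have the same height is automatically biequidimensional in the strong sense and satisfies the dimension formula, which settles both claims for $\Rep{T}$.

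Given any prime $\ideal p\subset\Rep{G}$, I would next produce a prime $\ideal q\subset\Rep{T}$ lying over $\ideal p$ of the same height. Starting from $(0)\subset\Rep{T}$, one iterates going-up along a saturated chain $(0)\subsetneq \ideal p_1\subsetneq\dots\subsetneq \ideal p_h = \ideal p$ of length $h=\dim(\Rep{G})_{\ideal p}$ to obtain a strictly increasing chain $(0)\subsetneq\ideal q_1\subsetneq\dots\subsetneq\ideal q_h =: \ideal q$ in $\Rep{T}$ with $\ideal q_i\cap\Rep{G} = \ideal p_i$; the reverse bound $\dim(\Rep{T})_{\ideal q}\leq \dim(\Rep{G})_{\ideal p}$ is automatic from incomparability. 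Since the extension is integral and both rings are domains, one has $\dim\Rep{T} = \dim\Rep{G}$ and $\dim\Rep{T}/\ideal q = \dim\Rep{G}/\ideal p$, so the dimension formula for $\Rep{T}$ at $\ideal q$ translates verbatim into the desired dimension formula for $\Rep{G}$ at $\ideal p$.

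For biequidimensionality of $\Rep{G}$ in the strong sense, I would take any maximal chain of primes in $\Rep{G}$ (necessarily running from $(0)$ to a maximal ideal) and lift it by going-up to a chain of the same length in $\Rep{T}$ starting at $(0)$. This lift remains maximal in $\Rep{T}$, since any intermediate prime or proper super-prime of the top term would contract, by incomparability, to a strict refinement or extension of the original chain in $\Rep{G}$. Biequidimensionality of $\Rep{T}$ then forces the common length to be $\dim\Rep{T}=\dim\Rep{G}$. The only potentially delicate point is biequidimensionality of $\Rep{T}$ itself: it is \emph{not} formal from Cohen--Macaulayness (a Cohen--Macaulay Noetherian ring need not be equidimensional) and relies on the arithmetic observation above that every maximal ideal of a finitely generated $\ZZ$-algebra contains a rational prime.
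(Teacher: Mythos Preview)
Your argument is correct, but the paper proceeds quite differently. Rather than transferring along the finite extension \(\Rep G\hookrightarrow \Rep T\), the paper works with \(\Rep G\) intrinsically: it observes that \(\Rep G\) is an integral domain finitely generated over \(\ZZ\), invokes the universal catenarity of \(\ZZ\) to get catenarity of \(\Rep G\), and then cites the general EGA fact that any integral domain finitely generated over a Dedekind domain with infinitely many maximal ideals is equicodimensional. The dimension formula is then read off from a reference on biequidimensional rings.

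Your route is more elementary and more self-contained: you avoid the EGA citation and the external reference for the dimension formula, at the cost of doing the going-up\slash incomparability bookkeeping by hand. The one non-trivial input you need is the arithmetic Nullstellensatz (a field finitely generated as a ring is finite), which plays the same role in your argument that the EGA equicodimensionality result plays in the paper's. Conversely, the paper's argument is shorter and never touches \(\Rep T\); it would apply verbatim to any integral domain finitely generated over \(\ZZ\), whereas your argument genuinely uses that \(\Rep T\) is a Laurent polynomial ring to pin down the heights of its maximal ideals. Both approaches ultimately hinge on the same arithmetic phenomenon---that \(\ZZ\) has infinitely many primes and no maximal ideal of height zero---but they package it differently.
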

\begin{proof}
  The dimensional formula holds in biquidimensional rings by \cite[Proposition~4.1]{Heinrich}.
  So we need only show that \(\Rep G\) is biequidimensional.
  We refer to \cite{Heinrich} for an in-depth discussion of this property.
  Suffice it to say that, in general, an equidimensional, coequidimensional and catenary ring is \emph{not} necessarily biequidimensional.
  However, it is readily verified that the following three properties do imply biequidimensionality of a ring \(R\):
  \begin{compactenum}[(i)]
  \item \(R\) has a unique minimal prime ideal,
  \item \(R\) is catenary, \ie all saturated chains of prime ideals between two given prime ideals in \(R\) have equal length, and
  \item \(R\) is equicodimensional, \ie all its maximal ideals have equal height.
  \end{compactenum}
  We now verify these two properties for \(R = \Rep G\).
  Property~(i) clearly holds as \(\Rep G\) is an integral domain (\Cref{RG-for-connected-G}~(\labelcref{RG-for-connected-G:integral-domain})).
  Property~(ii) holds for \(\Rep G\) as \(\ZZ\) is universally catenary, as is any Dedekind domain, and as \(\Rep{G}\) is finitely generated over \(\ZZ\) (\Cref{RG-for-general-G}~(\labelcref{RG-for-general-G:noetherian})).
  Property~(iii) holds for \(R = \Rep G\) in view of the following general fact:  any integral domain \(R\) finitely generated over a Noetherian Jacobson universally catenary and equicodimensional domain \(S\) is again equicodimensional \cite[Proposition~10.6.1 (i)]{EGA4}.  This applies in particular to \(S = \ZZ\), and more generally to any Dedekind domain \(S\) with infinitely many maximal ideals.
\end{proof}

We include here another lemma that we will use in conjunction with \Cref{RG-biequidimensional-for-connected-G}.
\begin{lem}\label{dimension-lemma}
  Suppose \(R\hookrightarrow S\) is a finite extension of noetherian rings, and that \(\ideal b \subseteq S\) is some ideal.
  For any prime ideal \(\ideal p\in\support[R]{(S/\ideal b)}\), there exists a prime ideal \(\ideal p'\in\support[R]{(S/\ideal b)}\) such that \(\ideal p'\subseteq \ideal p\) and such that
  \[
    \dim{S_{\ideal p}} - \dim{(S/\ideal b)_{\ideal p}} = \dim{R_{\ideal p}} - \dim{(R/\ideal p')_{\ideal p}}.
  \]
\end{lem}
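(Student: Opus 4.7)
My plan is to exhibit \(\ideal p'\) as the contraction to \(R\) of a carefully chosen minimal prime of \(S\) over \(\ideal b\). Since \(R\hookrightarrow S\) is finite, hence integral, the going-up theorem applies, and \(\dim(S/\ideal b)_{\ideal p}\) is realised by a saturated chain
\(
  \ideal r = \ideal q_0 \subsetneq \ideal q_1 \subsetneq \cdots \subsetneq \ideal q_d = \ideal q
\)
in \(\Spec S\), where \(\ideal r\) is minimal over \(\ideal b\) with \(\ideal r\cap R\subseteq \ideal p\) and \(\ideal q\) lies over \(\ideal p\). (Minimality of \(\ideal r\) in \((S/\ideal b)_{\ideal p}\) forces minimality over \(\ideal b\) globally, since any prime of \(S\) strictly between \(\ideal b\) and \(\ideal r\) automatically contracts into \(\ideal p\); maximality of \(\ideal q\) forces \(\ideal q\cap R = \ideal p\) by going-up.) I then set \(\ideal p' := \ideal r\cap R\). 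By construction \(\ideal p'\subseteq\ideal p\), and \(\ideal p'\in\support[R]{(S/\ideal b)}\), since \(\ideal r\) is a prime of \(S\) lying over \(\ideal p'\) and containing \(\ideal b\).

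With this choice in hand, the desired identity reduces to the two equalities \(\dim R_{\ideal p} = \dim S_{\ideal p}\) and \(\dim(R/\ideal p')_{\ideal p} = \dim(S/\ideal b)_{\ideal p}\). The first is immediate from the classical fact that integral extensions of noetherian rings preserve Krull dimension, applied to the finite localisation \(R_{\ideal p}\hookrightarrow S_{\ideal p}\). For the second, I would factor through the intermediate quotient \(S/\ideal r\): the induced map \(R/\ideal p'\hookrightarrow S/\ideal r\) is a finite extension of integral domains, so localising at \(R\setminus\ideal p\) yields \(\dim(R/\ideal p')_{\ideal p} = \dim(S/\ideal r)_{\ideal p}\) by the same dimension theorem. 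It then remains to check that \(\dim(S/\ideal r)_{\ideal p} = \dim(S/\ideal b)_{\ideal p}\): the inequality ``\(\leq\)'' holds because every prime chain in \((S/\ideal r)_{\ideal p}\) is automatically a chain in \((S/\ideal b)_{\ideal p}\) (as \(\ideal r\supseteq\ideal b\)), while the reverse inequality is witnessed by the chain from \(\ideal r\) to \(\ideal q\) fixed above.

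I do not anticipate any serious obstacle: the argument rests only on the going-up theorem and the invariance of Krull dimension under integral extensions, both standard for finite extensions of noetherian rings. The one substantive move is that replacing \(\ideal b\) by a well-chosen minimal prime \(\ideal r\) above it (and then by its contraction \(\ideal p'\)) leaves the relevant localised dimensions unchanged; everything else is bookkeeping.
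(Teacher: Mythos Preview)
Your argument is correct and follows essentially the same route as the paper: both proofs rest on the fact that a finite extension preserves Krull dimension after localisation, and both produce \(\ideal p'\) as a suitable minimal prime. The only cosmetic difference is the order of operations: the paper first contracts \(\ideal b\) to \(\ideal a := \ideal b\cap R\) and then chooses \(\ideal p'\) minimal over \(\ideal a\) inside \(R_{\ideal p}\), whereas you first choose a minimal prime \(\ideal r\) over \(\ideal b\) inside \(S_{\ideal p}\) and then contract it to \(\ideal p'\).
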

\begin{proof}
  For the ideal \(\ideal a := R\cap \ideal b\), we have a finite extension \(R/\ideal a \hookrightarrow S/\ideal b\), and hence a finite extension \((R/\ideal a)_{\ideal p} \hookrightarrow (S/\ideal b)_{\ideal p}\) for any prime ideal \(\ideal p\subset R\).  In particular, \(\dim{(S/\ideal b)_{\ideal p}} = \dim{(R/\ideal a)_{\ideal p}}\) and   \(\dim{S_{\ideal p}} = \dim{R_{\ideal p}}\).
  Next, note that
  \(
    \support[R]{(S/\ideal b)} = \support[R]{(R/\ideal a)} = \{\ideal p \in \Spec(R) \mid \ideal a\subseteq \ideal p\}.
  \)
  Indeed, the first equality again follows from the observation that \((R/\ideal a)_{\ideal p} \hookrightarrow (S/\ideal b)_{\ideal p}\) is an extension, and the second equality is elementary. So the given prime ideal \(\ideal p \in\support[R]{(S/\ideal b)}\) contains \(\ideal a\), and
  \(
  \dim{(R/\ideal a)_{\ideal p}} = \dim{(R/\ideal p')_{\ideal p}}
  \)
  for some prime ideal \(\ideal p'\) of \(R\) minimal among the primes satisfying \(\ideal a \subseteq \ideal p' \subseteq \ideal p\).
  In particular, \(\ideal p' \in \support[R]{(S/\ideal b)}\).
\end{proof}

\subsection{Good groups and Steinberg's Theorem}
For any compact connected Lie group \(G\), the commutator subgroup \(G'\) is semisimple \cite[Theorem~5.22\,(b)]{Sepanski}.
Moreover, \(G\) can be identified with a quotient group \(G\cong (G'\times T)/\Gamma\), where \(T\) is a torus and \(\Gamma\) is a finite abelian subgroup of \(G'\times T\) intersecting \(G'\) trivially \cite[Theorem~5.22\,(a)]{Sepanski}.  Following Steinberg, we therefore refer to \(G'\) as the \textbf{semisimple component} of \(G\).
The following is the main result of \cite{Steinberg:Pittie}.

\begin{thm}[{\cite[Theorem~1.2]{Steinberg:Pittie}}]\label{thm:steinberg}
  For a compact connected Lie group \(G\), the following conditions are equivalent:
  \begin{compactenum}[(a)]
  \item
    The semisimple component \(G'\) of \(G\) is a direct product of simple groups, each simply-connected or isomorphic to \(\SO(2r+1)\) for some \(r\).
    \label{def:good:semisimple-component}
  \item
    The representation ring \(\Rep{G}\) is a tensor product of a polynomial ring and a Laurent ring, \ie isomorphic to the ring \(\ZZ[x_1,\dots,x_m,y_1^{\pm 1}, \dots,y_n^{\pm 1}]\) for certain \(m\) and \(n\).
    \label{def:good:representation-ring}
  \item
    For every connected subgroup \(H\) of \(G\) of maximal rank, \(\Rep{H}\) is free as an \(\Rep{G}\)-module.
    \label{def:good:subgroups}
  \end{compactenum}
\end{thm}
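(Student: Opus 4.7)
The plan is to prove the cycle (a) $\Rightarrow$ (b) $\Rightarrow$ (c) $\Rightarrow$ (a), with everything translated to invariant theory via a fixed maximal torus $T \subseteq G$ with character lattice $L := \Char{T}$. By \Cref{RG-for-connected-G}, $\Rep T = \ZZ[L]$ and $\Rep G = \ZZ[L]^\Weyl$ where $\Weyl$ is the Weyl group of $(G,T)$. If $H \supseteq T$ is connected of maximal rank, then similarly $\Rep H = \ZZ[L]^{\Weyl_H}$ for a subgroup $\Weyl_H \subseteq \Weyl$.

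For (a) $\Rightarrow$ (b), I would exploit the central isogeny $G \cong (G'\times T_c)/\Gamma$ with $T_c$ a central torus and $\Gamma$ finite intersecting $G'$ trivially, reducing the analysis of $\Rep G$ to that of $\Rep{G'}$ and $\Rep{T_c}$ together with a compatibility at $\Gamma$. The torus contributes the Laurent factor directly, by the abelian case treated earlier. For a simply-connected simple factor, the classical fact that $\Rep{G'}$ is the polynomial ring on the fundamental representations follows from the bijection between dominant weights and irreducible representations combined with the fact that, in the simply-connected case, the fundamental weights form a $\ZZ$-basis of the weight lattice. The $\SO(2r+1)$ case can be handled by descent from its double cover $\Spin(2r+1)$. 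The quotient by $\Gamma$ preserves the polynomial/Laurent shape once one checks that $\Gamma$-invariance merely rescales some of the Laurent variables.

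For (b) $\Rightarrow$ (c), the crucial intermediate statement is that $\Rep T$ is free of rank $|\Weyl|$ over $\Rep G = \Rep T^\Weyl$. This is an integral version of the Chevalley--Shephard--Todd theorem: the hypothesis that $\Rep G$ is regular (in fact polynomial/Laurent) together with the finite group action of $\Weyl$ on $\Rep T$ by pseudo-reflections on $L$ forces the extension to be free. Granted this, the same principle applied to $\Weyl_H \subseteq \Weyl$ shows that $\Rep T$ is free over $\Rep H$ of rank $|\Weyl_H|$; counting ranks, $\Rep H$ is a finitely generated projective $\Rep G$-module, and by the regularity of $\Rep G$ it is in fact free. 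For (c) $\Rightarrow$ (a), I would argue contrapositively by producing, for each excluded simple factor (the non-simply-connected ones other than $\SO(2r+1)$), an explicit maximal-rank connected subgroup $H$ — typically the centralizer of a carefully chosen element, giving a Levi-type subgroup — for which freeness of $\Rep H$ over $\Rep G$ visibly fails, detected either by a rank count against the index $[\Weyl:\Weyl_H]$ or by a torsion obstruction.

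The main obstacle is the integral Chevalley--Shephard--Todd step inside (b) $\Rightarrow$ (c): the usual theorem is stated for a finite group acting on a polynomial ring over a field, whereas here $\Weyl$ acts on the group algebra $\ZZ[L]$ of a lattice, and one must establish freeness of $\ZZ[L]$ over $\ZZ[L]^\Weyl$ integrally. The standard route is to identify the reflection hypothesis on $\Weyl$ as precisely the combinatorial condition characterizing the allowed simple factors in (a), and then to produce a $\Rep G$-basis of $\Rep T$ either by an explicit polynomial construction (Steinberg's approach, adapted from the classical Chevalley proof) or, more structurally, by combining that $\Rep T$ is Cohen--Macaulay with the regularity of $\Rep G$ provided by (b) and then invoking an Auslander--Buchsbaum-type argument to upgrade finite projective dimension to freeness; matching all of this to the explicit group-theoretic condition in (a) is the real content of Steinberg's argument.
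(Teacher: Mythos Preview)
The paper does not prove this theorem; it is quoted as \cite[Theorem~1.2]{Steinberg:Pittie} and used as a black box, so there is no proof in the paper to compare your sketch against.

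That said, your outline of (b) $\Rightarrow$ (c) contains a real gap. After arguing that $\Rep T$ is free over $\Rep G = (\Rep T)^{\Weyl}$, you write that ``the same principle applied to $\Weyl_H \subseteq \Weyl$ shows that $\Rep T$ is free over $\Rep H$ of rank $|\Weyl_H|$''. But ``the same principle'' needed the base ring to be polynomial/Laurent, i.e.\ needed $H$ to satisfy~(b), and nothing you have established gives this. In fact it can fail: take $G = G_2$ and $H = \SO(4)$, a connected maximal-rank subgroup. The semisimple part of $\SO(4)$ is neither simply connected nor of the form $\SO(2r+1)$, so by the very theorem under discussion $\SO(4)$ fails~(a) and~(b), and $\Rep T$ is \emph{not} free over $\Rep{\SO(4)}$. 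Your rank-counting deduction, which is otherwise sound (freeness of $C$ over both $A$ and $B$ with $A\subseteq B\subseteq C$ does make $B$ a direct $A$-summand of $C$), therefore never gets started.

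The two standard repairs both bypass any hypothesis on $\Rep H$ as a base ring. One is Steinberg's own: his explicit $\Rep G$-basis of $\Rep T$ is indexed by $\Weyl$ in such a way that appropriate $\Weyl_H$-combinations yield an $\Rep G$-basis of $\Rep H$. The other is available directly from the tools in this paper: the holomorphic-induction splitting of \Cref{holomorphic-induction} exhibits $\Rep H$ as an $\Rep G$-module direct summand of $\Rep T$, so once $\Rep T$ is free over $\Rep G$ the module $\Rep H$ is projective, hence free by \Cref{flat-is-projective-is-free-for-regular-G}. For (c) $\Rightarrow$ (a), note also that Steinberg's counterexample is simply $H = T$: he shows $\Rep T$ is free over $\Rep G$ \emph{exactly} when (a) holds, so no search among Levi subgroups is needed.
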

To avoid endlessly repeating these somewhat intricate conditions, we simply define a \textbf{good} Lie group to be a compact connected Lie group \(G\) that satisfies the equivalent conditions of \Cref{thm:steinberg}.  The result stated as \textit{Steinberg's Theorem} is just the implication (a~\(\Rightarrow\)~c) of this theorem combined with the following observation:

\begin{lem}\label{torsion-free-fundamental-group-implies-good}
  Any compact connected Lie group with torsion-free fundamental group is good.
\end{lem}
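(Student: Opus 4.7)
The plan is to verify condition~(\labelcref{def:good:semisimple-component}) of \Cref{thm:steinberg}, namely that the semisimple component \(G'\) is a direct product of simple groups, each simply-connected or of type \(\SO(2r+1)\). In fact, I would prove the stronger statement that torsion-freeness of \(\pi_1(G)\) forces \(G'\) to be simply-connected outright, so the \(\SO(2r+1)\) factors never need to be invoked.

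The starting point is the decomposition \(G \cong (G' \times T)/\Gamma\) recalled immediately before \Cref{thm:steinberg}, with \(T\) a torus of some rank \(r\) and \(\Gamma \subset G' \times T\) a finite (hence automatically central) subgroup meeting \(G'\) trivially.  The quotient map \(q\colon G' \times T \twoheadrightarrow G\) is then a finite covering, and by the covering-space interpretation of the fundamental group it yields the short exact sequence
\[
  0 \longrightarrow \pi_1(G') \times \ZZ^r \longrightarrow \pi_1(G) \longrightarrow \Gamma \longrightarrow 0.
\]
In particular, \(\pi_1(G') \times \ZZ^r\) embeds in \(\pi_1(G)\), so any torsion in \(\pi_1(G')\) would survive as torsion in \(\pi_1(G)\).

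The second input is the classical fact that the fundamental group of a compact connected semisimple Lie group is finite, the universal cover of such a group being itself compact (Weyl).  Combined with the torsion-freeness of \(\pi_1(G)\), this forces the finite group \(\pi_1(G')\) to be trivial, whence \(G'\) is a simply-connected compact semisimple Lie group.  By the standard classification, \(G'\) then decomposes as a direct product of simply-connected simple compact Lie groups, verifying~(\labelcref{def:good:semisimple-component}).

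I do not anticipate a serious obstacle: the proof is a short chase through the structure theorem for compact connected Lie groups.  The one point worth a sentence of justification is why \(q\) yields a genuine \emph{short} exact sequence of fundamental groups, which comes from \(\Gamma\) being discrete and acting freely on \(G' \times T\) by left translation, so that \(q\) is an honest regular covering with deck group \(\Gamma\).
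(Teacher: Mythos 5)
Your proof is correct and follows essentially the same route as the paper: both verify condition~(a) of Steinberg's Theorem by observing that torsion-freeness of \(\pi_1(G)\) forces \(\pi_1(G')\) to be torsion-free, hence (being finite for compact semisimple \(G'\)) trivial, so that \(G'\) is simply-connected. You merely make explicit, via the covering-space short exact sequence, the step the paper compresses into ``we find that the fundamental group of \(G'\) is also torsion-free.''
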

\begin{proof}
  Suppose \(G\) is compact connected with torsion-free fundamental group.  Writing \(G\) as \((G'\times T)/\Gamma\) as above, we find that the fundamental group of \(G'\) is also torsion-free.  As \(G'\) is semisimple, this implies that \(G'\) is simply-connected, and hence a direct product of simply-connected simple groups. So \(G\) satisfies condition~(\labelcref{def:good:semisimple-component}) of \Cref{thm:steinberg}.
\end{proof}

Condition~(b) in \Cref{thm:steinberg} implies:
\begin{lem}\label{RG-regular-for-good-G}
The representation ring \(\Rep{G}\) of any good group \(G\) is regular.
\end{lem}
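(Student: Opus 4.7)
The plan is to invoke part (\labelcref{def:good:representation-ring}) of \Cref{thm:steinberg}: by definition of \emph{good}, we have a ring isomorphism
\[
  \Rep{G} \cong \ZZ[x_1,\dots,x_m,y_1^{\pm 1},\dots,y_n^{\pm 1}]
\]
for some non-negative integers \(m\) and \(n\). It then suffices to show that the right-hand side is a regular ring, and this I would deduce from two well-known commutative-algebra facts: (i) \(\ZZ\) is a one-dimensional regular noetherian ring (being a PID, in fact a Dedekind domain); (ii) if \(R\) is regular noetherian, then so is the polynomial ring \(R[x]\) (a consequence of Hilbert's syzygy theorem, or equivalently of the fact that \(\dim R[x]=\dim R+1\) together with the behaviour of regular sequences); and (iii) any localization of a regular ring is again regular.

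Concretely, I would build up the ring in stages. Starting from \(\ZZ\), iterated application of~(ii) gives that the polynomial ring
\(\ZZ[x_1,\dots,x_m,y_1,\dots,y_n]\)
is regular noetherian. Inverting the single element \(y_1y_2\cdots y_n\) — or, equivalently, passing to the multiplicative localization at \(\{y_1^{a_1}\cdots y_n^{a_n}\}\) — then yields \(\ZZ[x_1,\dots,x_m,y_1^{\pm 1},\dots,y_n^{\pm 1}]\), and by~(iii) this ring remains regular. Combined with the isomorphism from Steinberg's Theorem, this proves that \(\Rep{G}\) is regular.

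There is no substantial obstacle here; the entire content of the lemma is packed into the equivalence~(\labelcref{def:good:representation-ring}) of \Cref{thm:steinberg}, which has already been quoted. The only thing to double-check is that the ambient ring is noetherian, but this is automatic since a finitely generated \(\ZZ\)-algebra is noetherian (and in any case already noted in \Cref{RG-for-general-G}~(\labelcref{RG-for-general-G:noetherian})).
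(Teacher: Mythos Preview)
Your proof is correct and follows essentially the same approach as the paper: both use that, for good \(G\), the representation ring is a localization of a polynomial ring over \(\ZZ\), and hence regular. The paper phrases this via finite global dimension rather than your step-by-step argument through regularity of \(\ZZ\), polynomial extensions, and localizations, but the content is the same.
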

\begin{proof}
  For good \(G\), \(\Rep G\) is a localization of a polynomial ring over the integers, which is clearly noetherian and of finite global dimension.
\end{proof}

Condition~(b) also implies that, for good \(G\), the notions of ``projective dimension'' and ``weak\slash flat dimension'' are equivalent for finitely generated \(\Rep G\)-modules. Indeed:
\begin{lem}\label{flat-is-projective-is-free-for-regular-G}
  For good \(G\), a finitely generated \(\Rep{G}\)-module is free if and only if is projective if and only it is flat.
\end{lem}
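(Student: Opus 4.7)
The implications free $\Rightarrow$ projective $\Rightarrow$ flat hold over any ring, so the substance of the statement is the reverse direction. The plan is to prove, in two steps, that every finitely generated flat \(\Rep G\)-module is free, factoring through projectivity.

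For the first step — flat $\Rightarrow$ projective — I would use only that \(\Rep G\) is noetherian (\Cref{RG-for-general-G}~(\labelcref{RG-for-general-G:noetherian})). Noetherianness makes every finitely generated module finitely presented, and a standard lemma in commutative algebra (see e.g.\ the Stacks Project, Tag 00NX, or Bourbaki, \emph{Commutative Algebra}, Chapter I, Section 2) states that a finitely presented flat module over any commutative ring is projective. Notably, this step does not use the hypothesis that \(G\) is good; the alternative route via finite global dimension (\Cref{RG-regular-for-good-G}) would work equally well but is overkill here.

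The substantive step — projective $\Rightarrow$ free — is the one I expect to be the main obstacle, since it rests on a non-trivial theorem of algebraic K-theory rather than any soft homological fact (for instance, \(\Rep G\) can be a Dedekind-like regular ring without projective-implies-free being automatic). Here I would invoke condition~(\labelcref{def:good:representation-ring}) of \Cref{thm:steinberg}, which for good \(G\) identifies \(\Rep G\) with the mixed polynomial–Laurent ring \(\ZZ[x_1,\ldots,x_m,y_1^{\pm 1},\ldots,y_n^{\pm 1}]\). As this ring is the localization of \(\ZZ[x_1,\ldots,x_m,y_1,\ldots,y_n]\) at the multiplicative system generated by \(y_1,\ldots,y_n\), the Quillen–Suslin theorem combined with its extension to Laurent polynomial rings due to Swan (see e.g.\ Lam, \emph{Serre's problem on projective modules}) shows that every finitely generated projective module over \(\Rep G\) is free. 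Chaining the two steps then yields the claim.
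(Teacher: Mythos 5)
Your proof is correct and takes essentially the same route as the paper: the equivalence of projective and flat is handled via Noetherianness of \(\Rep G\) (finitely presented flat modules being projective), and the equivalence of projective and free is the substantive step, invoking Swan's extension of Quillen--Suslin to mixed polynomial--Laurent rings, exactly as in the paper.
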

\begin{proof}
  The first two notions coincide by a generalization of the Quillen--Suslin theorem \citelist{\cite{Swan:Laurent}\cite{Swan:Gubeladze}} and \Cref{thm:steinberg}\,(b).  The second two coincide for finitely generated modules over arbitrary Noetherian rings.
\end{proof}

\begin{rem}[History]\label{historical-remark}
  The implication (a~\(\Rightarrow\)~b) of \Cref{thm:steinberg} is fairly elementary, and the implication (b~\(\Rightarrow\)~c), which is the main ingredient in the result stated as \textit{Steinberg's Theorem} in the introduction, was in fact proved slightly earlier by Pittie \cite{Pittie:Homogeneous}, modulo the above generalization of the Quillen--Suslin theorem.  Thus, from today's vantage point, the main contribution of Steinberg is a proof of the \emph{opposite} implication: a precise characterization of those compact Lie groups that have the desired property~(c).  Moreover, Steinberg's proof includes an explicit basis of \(\Rep H\) as an \(\Rep G\)-module.

We can easily sketch Pittie's short argument for the implication (b~\(\Rightarrow\)~c) based on the results and observations already included above: For a maximal torus \(T\) in a good group \(G\), both \(\Rep G\) and \(\Rep T\) are regular noetherian rings (\Cref{RG-regular-for-good-G}).  As \(\Rep T\) is in any case finitely generated as an \(\Rep G\)-module (\Cref{RH-finite-over-RG}), it thus follows from \cite[Chapter~III.D, \S\,5, Theorem~13\,(b)]{Serre:LA} that \(\Rep T\) is a projective \(\Rep G\)-module.  For a general subgroup of maximal rank \(H\), we may assume that \(H\) contains \(T\), and use holomorphic induction (\Cref{holomorphic-induction}) to deduce that \(\Rep H\) is a direct summand of \(\Rep T\).  In particular, \(\Rep H\) is also projective as an \(\Rep G\)-module.  Finally, as we have just seen in \Cref{flat-is-projective-is-free-for-regular-G}, this is today known to imply that \(\Rep H\) is free.
\end{rem}

While \Cref{thm:steinberg} only deals with subgroups of maximal rank, the following consequence for subgroups of arbitrary ranks was proved in \cite[Corollary~3.10]{SCSQ}:
\begin{cor}\label{cor:proj-dim}
  Let \(G\) be a good Lie group, and let \(H\subset G\) be an arbitrary closed connected subgroup.
  Then the projective dimension of \(\Rep H\) as an \(\Rep G\)-module is at most \(\rank G - \rank H\).
\end{cor}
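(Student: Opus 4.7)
The plan is to route \(\Rep H\) through a pair of tori \(T_H \subset T\) and combine three short inequalities. Fix a maximal torus \(T_H \subset H\) and a maximal torus \(T \subset G\) containing \(T_H\); this is possible because any torus in \(G\) is contained in some maximal torus.

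First, I would reduce to controlling the projective dimension of \(\Rep T_H\) rather than of \(\Rep H\). Applying Holomorphic Induction (\Cref{holomorphic-induction}) to the connected group \(H\) with its maximal torus \(T_H\) yields a section \(i_*\colon \Rep T_H \to \Rep H\) of the restriction \(i^*\colon \Rep H \hookrightarrow \Rep T_H\) as \(\Rep H\)-modules, hence also as \(\Rep G\)-modules (since the \(\Rep G\)-action on both sides factors through restriction to \(\Rep H\)). Thus \(\Rep H\) is a direct summand of \(\Rep T_H\) as an \(\Rep G\)-module, and \(\projdim_{\Rep G}(\Rep H) \leq \projdim_{\Rep G}(\Rep T_H)\).

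Second, since \(G\) is good and \(T\subset G\) has maximal rank, Steinberg's theorem (\Cref{thm:steinberg}~(\labelcref{def:good:subgroups})) says that \(\Rep T\) is free, hence projective, as an \(\Rep G\)-module. Any projective resolution of \(\Rep T_H\) over \(\Rep T\) therefore restricts to a projective resolution of the same length over \(\Rep G\): a direct summand of a free \(\Rep T\)-module is a direct summand of a free \(\Rep G\)-module, and restriction of scalars preserves exactness. Consequently \(\projdim_{\Rep G}(\Rep T_H) \leq \projdim_{\Rep T}(\Rep T_H)\).

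Third, I would bound the rightmost projective dimension by a direct Koszul calculation. Since \(T_H\) is a subtorus of \(T\), \Cref{subgroups-of-tori}~(\labelcref{subgroups-of-tori:restriction-to-subtori}) asserts that the kernel of the surjection \(\Rep T \twoheadrightarrow \Rep T_H\) is generated by a regular sequence of length \(\rank T - \rank T_H = \rank G - \rank H\). The Koszul complex on this sequence is a free \(\Rep T\)-resolution of \(\Rep T_H\) of the same length, so \(\projdim_{\Rep T}(\Rep T_H) \leq \rank G - \rank H\). Chaining the three inequalities proves the corollary. Every step is essentially forced by the structural results already assembled in the preceding subsections, so there is no serious obstacle beyond recognizing that the natural bridge from \(\Rep G\) to \(\Rep H\) runs through the pair of tori \(T_H \subset T\).
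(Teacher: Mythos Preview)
Your proof is correct and follows essentially the approach the paper describes: the paper does not reproduce the proof but notes that the ingredients are holomorphic induction and a change-of-rings result, and your three steps are precisely holomorphic induction (step~1), the change-of-rings inequality \(\projdim_{\Rep G} \leq \projdim_{\Rep T}\) coming from freeness of \(\Rep T\) over \(\Rep G\) (step~2), and the explicit Koszul bound over \(\Rep T\) (step~3). Your step~2 is phrased as a direct restriction-of-resolutions argument rather than via \Cref{Tor-CoR-I}, but this is the same fact in slightly more elementary clothing.
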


We do not reproduce the short proof of \Cref{cor:proj-dim} here, but we note that besides holomorphic induction (\Cref{holomorphic-induction}) the main ingredient is a change-of-rings theorem for Tor.  We record here two such theorems that we will have occasion to use later.

\begin{thm}[Change of rings for Tor I]\label{Tor-CoR-I}
  Fix a morphism of commutative rings \(\Lambda\to \Gamma\), a \(\Lambda\)-module \(A\) and a \(\Gamma\)-module \(C\):
  \[
    \begin{tikzcd}[column sep=tiny,row sep=tiny]
      A \arrow[d,dash] & C \arrow[d,dash]\\
      \Lambda \ar[r] & \Gamma
    \end{tikzcd}
  \]
  If \(\Tor^\Lambda_*(A,\Gamma)\) vanishes in all positive degrees, then we have isomorphisms of \(\Lambda\)-modules
  \(
    \Tor^\Lambda_i(A,C) \cong \Tor^\Gamma_i(A\otimes_\Lambda \Gamma, C)
  \)
  in all degrees~\(i\).
\end{thm}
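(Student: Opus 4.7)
The plan is to run the standard argument via a free resolution and the associativity of tensor product. First, I would pick a free (or projective) resolution \(P_\bullet \twoheadrightarrow A\) of \(A\) by \(\Lambda\)-modules. Applying \(-\otimes_\Lambda \Gamma\) yields a complex \(P_\bullet \otimes_\Lambda \Gamma\) of free \(\Gamma\)-modules, whose homology in degree~\(i\) is by definition \(\Tor^\Lambda_i(A,\Gamma)\). The hypothesis that these Tor-groups vanish in positive degrees therefore says precisely that \(P_\bullet \otimes_\Lambda \Gamma\) is a free \(\Gamma\)-resolution of \(A\otimes_\Lambda \Gamma\).

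With such a resolution in hand, one can compute the right-hand side of the asserted isomorphism as
\[
  \Tor^\Gamma_i(A\otimes_\Lambda \Gamma, C) \;=\; H_i\bigl((P_\bullet \otimes_\Lambda \Gamma)\otimes_\Gamma C\bigr).
\]
The associativity isomorphism for tensor products, applied to the \(\Lambda\)-module \(P_n\), the \((\Lambda,\Gamma)\)-bimodule \(\Gamma\) and the \(\Gamma\)-module \(C\), gives a natural identification \((P_n\otimes_\Lambda \Gamma)\otimes_\Gamma C \cong P_n \otimes_\Lambda C\) compatible with the differentials. Taking homology therefore identifies the displayed group with \(H_i(P_\bullet \otimes_\Lambda C) = \Tor^\Lambda_i(A,C)\), which gives the desired isomorphism. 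All of the identifications are natural in~\(C\) and in \(P_\bullet\), so the resulting map is independent of the choice of resolution, and is evidently \(\Lambda\)-linear (indeed, \(\Gamma\)-linear when the left-hand side is given its natural \(\Gamma\)-module structure).

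There is no genuine obstacle here; the only thing to verify with any care is that the hypothesis really does imply that \(P_\bullet\otimes_\Lambda \Gamma\) is exact in positive degrees \emph{and} has the correct cokernel, both of which are immediate from the definition of \(\Tor^\Lambda_\ast(A,\Gamma)\). I would therefore present the proof as a short sequence of isomorphisms, leaving naturality implicit, since the real content of the theorem for our purposes is the identification of the two derived functors rather than the construction of an explicit comparison map.
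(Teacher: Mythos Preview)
Your argument is correct and is exactly the standard proof of this change-of-rings isomorphism. The paper itself does not give a proof but simply cites \cite[Prop.~VI.4.1.1]{CartanEilenberg}, so your write-up is in fact more detailed than what appears in the paper; the underlying reasoning (free resolution, base change gives a resolution of \(A\otimes_\Lambda\Gamma\) under the vanishing hypothesis, then associativity of tensor) is precisely the content of that reference.
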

\begin{proof}
  See \cite[Chapter~VI, Proposition~4.1.1]{CartanEilenberg}.
\end{proof}

\begin{thm}[Change of rings for Tor II]\label{Tor-CoR-II}
  Consider commutative \(\ZZ\)-algebras \(\Gamma\), \(\Lambda\) and \(\Sigma\) which are free as abelian groups.
  Suppose \(A\) is a \(\Gamma\otimes\Lambda\)-module, \(B\) is a \(\Lambda\otimes\Sigma\)-module and \(C\) is a \(\Sigma\otimes\Gamma\)-module, as summarized in the following diagram:
  \[
    \begin{tikzcd}[column sep=tiny,row sep=tiny]
      A \arrow[dr,dash]\arrow[ddrr,dash,bend right] && \arrow[dl,dash] B \arrow[dr,dash] && C \arrow[dl,dash]\arrow[ddll,dash, bend left]\\
      & \Lambda && \Sigma \\
      &&  \Gamma
    \end{tikzcd}
  \]
  Suppose \(\Tor_*^\Lambda(A,B)\) and \(\Tor_*^\Sigma(B,C)\) vanish in all positive degrees.  Then we have natural isomorphisms of \(\Sigma\otimes\Gamma\)-modules
  \[
    \Tor_i^{\Gamma\otimes\Sigma}(A\otimes_{\Lambda}B,C) \cong \Tor_i^{\Lambda\otimes\Gamma}(A,B\otimes_{\Sigma}C)
  \]
  in all degrees~\(i\).
\end{thm}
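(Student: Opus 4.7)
The plan is to identify both sides of the claimed isomorphism with the homology of a single bicomplex and to collapse its two spectral sequences using the respective Tor-vanishing hypotheses together with \Cref{Tor-CoR-I}. Choose a \(\Gamma\otimes\Lambda\)-projective resolution \(P_\bullet \to A\) and a \(\Sigma\otimes\Gamma\)-projective resolution \(Q_\bullet \to C\), and form the first-quadrant bicomplex
\[
Y_{p,q} := (P_p\otimes_\Lambda B)\otimes_{\Gamma\otimes\Sigma}Q_q.
\]
Unwinding the interlocking module structures shows that \(Y_{p,q}\) equally well equals \(P_p\otimes_{\Gamma\otimes\Lambda}(B\otimes_\Sigma Q_q)\): both are the common quotient of \(P_p\otimes_\ZZ B\otimes_\ZZ Q_q\) that identifies the two \(\Lambda\)-actions, the two \(\Sigma\)-actions and the two \(\Gamma\)-actions. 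So the bicomplex is manifestly symmetric in its two arguments.

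For the vertical spectral sequence, \(H_q^{\mathrm{vert}}(Y_{p,\bullet})=\Tor_q^{\Gamma\otimes\Sigma}(P_p\otimes_\Lambda B,C)\). When \(P_p\) is the free module \(\Gamma\otimes\Lambda\), one has \(P_p\otimes_\Lambda B=B\otimes_\Sigma(\Sigma\otimes\Gamma)\); applying \Cref{Tor-CoR-I} to the extension \(\Sigma\hookrightarrow\Sigma\otimes\Gamma\)---which is \(\Sigma\)-flat because \(\Gamma\) is free as an abelian group---identifies this Tor with \(\Tor_q^\Sigma(B,C)\), which vanishes for \(q>0\) by hypothesis. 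The general projective case follows by passing to direct summands and using additivity of Tor. So the vertical spectral sequence collapses onto \(E^2_{p,0}=\Tor_p^{\Gamma\otimes\Lambda}(A,B\otimes_\Sigma C)\), identifying \(H_n(\total Y)\) with \(\Tor_n^{\Lambda\otimes\Gamma}(A,B\otimes_\Sigma C)\).

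A completely symmetric argument, filtering by rows and invoking \Cref{Tor-CoR-I} along \(\Lambda\hookrightarrow\Lambda\otimes\Gamma\) together with the hypothesis \(\Tor_*^\Lambda(A,B)=0\) in positive degrees, collapses the other spectral sequence onto \(E^2_{0,q}=\Tor_q^{\Gamma\otimes\Sigma}(A\otimes_\Lambda B,C)\); comparing the two computations of \(H_*(\total Y)\) yields the claimed isomorphism. The main hurdle is bookkeeping: one must track the three commuting pairs of \(\Gamma\)-, \(\Lambda\)- and \(\Sigma\)-actions carefully enough to verify identifications such as \((P_p\otimes_\Lambda B)\otimes_{\Gamma\otimes\Sigma}C\cong P_p\otimes_{\Lambda\otimes\Gamma}(B\otimes_\Sigma C)\) and to check that the bicomplex differentials are well defined and \((\Sigma\otimes\Gamma)\)-linear. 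The \(\ZZ\)-freeness of \(\Gamma\), \(\Lambda\), \(\Sigma\) enters precisely where it is needed, namely to guarantee the flatness of the extension-of-scalars rings in the two applications of \Cref{Tor-CoR-I}.
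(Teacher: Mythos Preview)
Your bicomplex argument is correct and is precisely the classical double-resolution technique underlying this kind of result.  The paper itself does not give an argument: it simply records that the statement is a special case of \cite[Chapter~IX, Theorem~2.8]{CartanEilenberg} (and notes, for the module-structure claim, that in the commutative situation the isomorphism \(r\) of \cite[Proposition~IX.2.1]{CartanEilenberg} is \(\Sigma\otimes\Gamma\)-linear).  Your write-up is essentially an unpacking of Cartan--Eilenberg's proof in the commutative, \(\ZZ\)-free setting: the bicomplex \(Y_{p,q}=P_p\otimes_{\Gamma\otimes\Lambda}B\otimes_{\Sigma\otimes\Gamma}Q_q\) is their ``triple tensor'' construction, and the collapse of the two spectral sequences via the flatness of \(\Sigma\hookrightarrow\Sigma\otimes\Gamma\) and \(\Lambda\hookrightarrow\Lambda\otimes\Gamma\) (which you phrase as an invocation of \Cref{Tor-CoR-I}) is exactly how they reduce to the given Tor-vanishing hypotheses.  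So your approach and the reference the paper cites agree; you have supplied what the paper outsourced.
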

\begin{proof}
  This is a special case of \cite[Chapter~IX, Theorem~2.8]{CartanEilenberg}.  Cartan and Eilenberg work with non-commutative \(K\)-projective algebras \(\Lambda\), \(\Gamma\), \(\Sigma\) over a commutative ring \(K\).  We have specialized the statement to the case of \(\ZZ\)-free commutative \(\ZZ\)-algebras. Note that the isomorphism \(r\) of \cite[Chapter~IX, Proposition~2.1]{CartanEilenberg}, on which the theorem relies, is an isomorphism of \(\Sigma\otimes\Gamma\)-algebras in this commutative situation.
\end{proof}

\subsection{Supports}
In \cite{Segal:RG}, Segal analyses the prime spectrum \(\Spec(\Rep{G})\) of an arbitrary compact Lie group \(G\).  In particular, Segal shows that, for each prime ideal \(\ideal p\subset \Rep{G}\), there is a subgroup \(S_{\ideal p}\), unique up to conjugation, which is minimal with respect to the property that \(\ideal p\) is the preimage of a prime ideal of \(\Rep(S_{\ideal p})\) under the restriction \(\Rep{G}\to \Rep{(S_{\ideal p})}\).  This subgroup \(S_{\ideal p}\), or rather any subgroup in this conjugacy class, is referred to as the \textbf{support} of~\(\ideal p\). The following \namecref{prop:segal} explains how Segal's ``supporting subgroups'' and the usual notion of ``prime ideals supporting a module'' are related.

\begin{prop}[Segal]\label{prop:segal}
  Let \(G\) be a compact Lie group, \(H\) a closed subgroup of \(G\)
  and \(\ideal p \in \Spec(\Rep G)\).
  The following statements are equivalent:
  \begin{compactenum}[(a)]
  \item \(\ideal p \in \im(\Spec(\Rep H) \to \Spec(\Rep G))\)
  \item \(\ideal p\) contains \(\ker(\Rep G \to \Rep H)\)
  \item \(\ideal p \in \support[\Rep G]{\Rep H}\)
  \item  The support \(S_{\ideal p}\) is conjugate to a subgroup of \(H\).
  \end{compactenum}
\end{prop}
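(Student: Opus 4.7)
My plan is to split the four conditions into two groups: (a), (b), and (c) are equivalences coming from elementary commutative algebra applied to the finite ring extension induced by the restriction $i\colon \Rep G \to \Rep H$, while the link to (d) is where Segal's deeper analysis of $\Spec(\Rep G)$ has to be invoked. The key external input throughout is that $\Rep H$ is finitely generated as an $\Rep G$-module (\Cref{RH-finite-over-RG}).

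First I would dispatch (a)$\iff$(b)$\iff$(c). The implication (a)$\Rightarrow$(b) is immediate, as any preimage of a prime contains the kernel. For (b)$\Rightarrow$(a), given $\ideal p \supseteq \ker i$, the prime $\ideal p$ descends to a prime of $\im(i) \cong \Rep G/\ker i$; since $\im(i) \hookrightarrow \Rep H$ is a finite, hence integral, extension, lying over produces a prime of $\Rep H$ restricting to $\ideal p$. For (b)$\iff$(c), I would invoke the standard identity $\support[\Rep G]{\Rep H} = V(\mathrm{Ann}_{\Rep G}\Rep H)$, which is valid for any finitely generated module, together with the observation that $\mathrm{Ann}_{\Rep G}(\Rep H) = \ker i$, since $1\in\Rep H$ forces any annihilator $r\in \Rep G$ to satisfy $i(r)=0$.

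For (a)$\iff$(d) I would appeal to Segal's construction directly. The implication (d)$\Rightarrow$(a) is straightforward: if $g S_{\ideal p} g^{-1}\subseteq H$, then the restriction $\Rep G \to \Rep(gS_{\ideal p}g^{-1}) \cong \Rep{S_{\ideal p}}$, which exhibits $\ideal p$ as a preimage by the very definition of $S_{\ideal p}$, factors through $\Rep H$, and the prime of $\Rep H$ obtained by lying-over along $\Rep H \to \Rep(gS_{\ideal p}g^{-1})$ restricts to $\ideal p$ in $\Rep G$. Conversely, (a)$\Rightarrow$(d) is essentially the defining property of $S_{\ideal p}$ in \cite{Segal:RG}: the conjugacy class of $S_{\ideal p}$ is characterized as the unique minimal class among subgroups $S\subseteq G$ with $\ideal p\in\im(\Spec(\Rep S)\to \Spec(\Rep G))$, and this minimality forces any subgroup in the associated set — in particular $H$ under (a) — to contain a conjugate of $S_{\ideal p}$.

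The main obstacle here is not computational but bibliographical: the non-trivial content in (a)$\Rightarrow$(d) is inherited wholesale from Segal, and one has to verify that the statement invoked matches exactly the formulation in \cite{Segal:RG}. Once Segal's existence-and-uniqueness result for the minimal supporting subgroup is in hand, every implication in the proposition reduces to routine manipulation of finite ring extensions.
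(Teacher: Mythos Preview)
Your proposal is correct. The paper does not actually prove this proposition: it simply cites \cite[Proposition~3.7\,(iv)]{Segal:RG} (and \cite[Proposition~110]{Wiechers}) and moves on. Your approach is therefore strictly more informative: you separate out the purely commutative-algebraic content (a)$\iff$(b)$\iff$(c), which you derive cleanly from the finiteness of $\Rep H$ over $\Rep G$ (\Cref{RH-finite-over-RG}), from the representation-theoretic step (a)$\iff$(d), for which you correctly defer to Segal. Two minor points of phrasing. In (a)$\Rightarrow$(d), the claim that ``minimality forces $H$ to contain a conjugate of $S_{\ideal p}$'' does not follow from abstract minimality alone --- uniqueness of a minimal element in a poset does not by itself imply it lies below every element --- and this containment is precisely the extra content Segal establishes (by constructing $S_{\ideal p}$ inside any given $H$ with the property); you flag this bibliographical dependence yourself, so it is not a gap, but the wording could be tightened. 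In (d)$\Rightarrow$(a), invoking lying-over along $\Rep H \to \Rep(gS_{\ideal p}g^{-1})$ is more than needed: simply pulling back the prime along this map already yields the required prime of $\Rep H$.
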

\begin{proof}
  See \cite[Proposition~3.7\,(iv)]{Segal:RG} (or \cite[Proposition~110]{Wiechers}).
\end{proof}
\begin{prop}\label{supports}
  Let \(G\) be a compact connected Lie group with maximal torus~\(T\).
  \begin{compactenum}[(a)]
  \item \label{supports:subgroups}
    For subgroups \(H'\subseteq H \subseteq G\), we have
    \(
    \support[\Rep G]{\Rep H'} \subseteq \support[\Rep G]{\Rep H}.
    \)
  \item \label{supports:intersection}
    For any two subgroups \(H_1\) and \(H_2\),
    \begin{align*}
      \support[\Rep G]{(\Rep H_1 \otimes_{\Rep G} \Rep H_2)}
      &= \support[\Rep G]{\Rep H_1}\cap \support[\Rep G]{\Rep H_2}\\
      &= \bigcup_{g\in G}\support[\Rep G]\Rep(H_1\cap gH_2g^{-1}).
    \end{align*}
  \item \label{supports:dim}
    Given a subgroup \(H\subseteq G\) and a prime ideal \(\ideal p\in\support[\Rep G]{\Rep H}\),
    \[
      \dim{\Rep G/\ideal p}\leq \rank H + 1.
    \]
  \item
    Let \(\centre{G}\) denote the centre of \(G\).
    If \(\ideal p\in\support[\Rep G]{\Rep(\centre{G})}\), there is exactly one prime ideal of \(\Rep T\) lying over \(\ideal p\).
    \label{supports:centre}
  \end{compactenum}
\end{prop}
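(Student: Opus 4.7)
The plan is to derive all four parts from Segal's characterization of support (\Cref{prop:segal}), together with the noetherianness of \(\Rep G\) (\Cref{RG-for-general-G}~(\labelcref{RG-for-general-G:noetherian})) and the finiteness of \(\Rep H\) over \(\Rep G\) (\Cref{RH-finite-over-RG}).

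Part (a) follows immediately from \Cref{prop:segal}(b): the restriction \(\Rep G \to \Rep{H'}\) factors through \(\Rep H\), so \(\ker(\Rep G \to \Rep H) \subseteq \ker(\Rep G \to \Rep{H'})\), and any prime containing the latter contains the former. For part (b), the first equality is the standard identity \(\mathrm{Supp}(M \otimes_R N) = \mathrm{Supp}(M) \cap \mathrm{Supp}(N)\) for finitely generated modules over a noetherian ring. The second equality is the only non-trivial step, and I would prove it via \Cref{prop:segal}(d). The containment \(\supseteq\) is immediate, since any subgroup of \(H_1 \cap g H_2 g^{-1}\) sits inside \(H_1\) and, after conjugating by \(g^{-1}\), inside \(H_2\). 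For \(\subseteq\), one has to reconcile two separate conjugations of the support \(S_{\ideal p}\): if \(g_1 S_{\ideal p} g_1^{-1} \subseteq H_1\) and \(g_2 S_{\ideal p} g_2^{-1} \subseteq H_2\), then setting \(g := g_1 g_2^{-1}\) yields
\[
  g_1 S_{\ideal p} g_1^{-1} = g \bigl(g_2 S_{\ideal p} g_2^{-1}\bigr) g^{-1} \subseteq H_1 \cap g H_2 g^{-1},
\]
placing \(\ideal p\) in the corresponding summand of the right-hand union.

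For part (c), \Cref{prop:segal}(a) presents \(\ideal p\) as the preimage of some \(\ideal q \in \Spec \Rep H\); finiteness of \(\Rep H\) over \(\Rep G\) then makes \(\Rep G/\ideal p \hookrightarrow \Rep H/\ideal q\) a finite integral extension, so both rings share the same Krull dimension, bounded by \(\dim \Rep H = \rank H + 1\) from \Cref{RG-for-general-G}~(\labelcref{RG-for-general-G:dimension}). For part (d), the key input is that \(\centre G \subseteq T\) and that \(\Weyl\) acts trivially on \(\centre G\), hence on \(\Rep(\centre G)\); since the restriction \(\Rep T \to \Rep(\centre G)\) is \(\Weyl\)-equivariant with trivial action on the target, the preimage in \(\Rep T\) of any prime of \(\Rep(\centre G)\) is automatically \(\Weyl\)-invariant. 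Applying \Cref{prop:segal}(a) to pick a prime of \(\Rep(\centre G)\) that pulls back to \(\ideal p\), its preimage in \(\Rep T\) is a \(\Weyl\)-invariant prime lying over \(\ideal p\), and thus is the unique such prime by \Cref{RG-for-connected-G}~(\labelcref{RG-for-connected-G:max-RT-primes}). Of the four parts, only the second equality in (b) demands any real work, and even there the essential content is the short conjugation identity displayed above.
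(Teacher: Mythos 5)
Your proof is correct, and for parts (a), (b), and (d) it tracks the paper's own argument closely (the paper invokes the equivalence (c$\iff$d) of \Cref{prop:segal} where you invoke (b) or (d), but this is just a different face of the same coin). The genuine difference is in part (c), where your route is actually cleaner than the paper's. The paper's proof goes through Segal's observation that the supporting subgroup \(S_{\ideal p}\) is topologically cyclic, places it inside a maximal torus \(T\), and then chases dimensions through the composite \(\Rep G/\ideal p \hookrightarrow \Rep T/\ideal p_T \to \Rep(S_{\ideal p})/\ideal p_0\). You bypass this entirely: you simply read off from \Cref{prop:segal}(a) that \(\ideal p\) is the contraction of some \(\ideal q\in\Spec(\Rep H)\), note that the induced map \(\Rep G/\ideal p\hookrightarrow \Rep H/\ideal q\) is injective (by construction of the contraction) and module-finite (by \Cref{RH-finite-over-RG}), so the two rings have equal Krull dimension, and then bound \(\dim(\Rep H/\ideal q)\) by \(\dim \Rep H = \rank H + 1\) via \Cref{RG-for-general-G}~(\labelcref{RG-for-general-G:dimension}). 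Your argument works for arbitrary (not necessarily connected) \(H\) just as the paper's does, never needs the maximal torus, and does not need the cyclic-support structure theorem; the only ingredients are the equivalence in \Cref{prop:segal} and finiteness of \(\Rep H\) over \(\Rep G\). The paper's detour gives no extra information here, so your shorter proof is preferable.
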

\begin{proof}
  Claim~(\labelcref{supports:subgroups}) is clear from the equivalence (c~\(\iff\)~d) of \Cref{prop:segal}.

  The first equality in Claim~(\labelcref{supports:intersection}) holds in general for finitely generated modules over a commutative ring.
  See \cite[Chapter~3, Exercise~19 (iv)]{AtiyahMacDonald} for the general fact,
  and \Cref{RH-finite-over-RG} for finite generation in the case at hand.
  The second equality in Claim~(\labelcref{supports:intersection}) again follows from the equivalence (c~\(\iff\)~d) of \Cref{prop:segal}.

  For claim~(\labelcref{supports:dim}), recall from \cite{Segal:RG} that the support \(S_{\ideal p}\) is topologically cyclic, \ie contains an element \(s\) whose powers are dense in \(S_{\ideal p}\). This element \(s\) is contained in some maximal torus of \(G\), so we may as well assume \(s\in T\) and hence \(S_{\ideal p}\subseteq T\).
  The ideal \(\ideal p\subset \Rep G\) is thus the inverse image of an ideal \(\ideal p_0\subset \Rep{(S_{\ideal p})}\) under a restriction map that factors as
  \[
    \Rep G \hookrightarrow \Rep T \twoheadrightarrow \Rep{(S_{\ideal p})}.
  \]
  Recall that any inclusion of a subgroup into a torus induces a surjection on representation rings (\Cref{subgroups-of-tori}~(\labelcref{subgroups-of-tori:restriction-surjective})).
  Writing \(\ideal p_T\subset \Rep T\) for the intermediate preimage of \(\ideal p_0\), we obtain induced morphisms of quotient rings as follows:
  \[
    \Rep G/\ideal p \hookrightarrow \Rep T/\ideal p_T \xrightarrow{\cong} \Rep{(S_{\ideal p})}/\ideal p_0
  \]
  Here, the first map is a finite extension (see \Cref{RG-for-connected-G}),
  so \(\dim{\Rep G/\ideal p} = \dim{\Rep T/\ideal p_T}\), and we deduce
  \(
  \dim{\Rep G/\ideal p} \leq \dim{\Rep{(S_{\ideal p})}}
  \).
  Moreover, by \Cref{RG-for-general-G}~(\labelcref{RG-for-general-G:dimension}), \(\dim{\Rep{(S_{\ideal p})}} = \rank{S_{\ideal p}} + 1\), and as \(S_{\ideal p}\) is conjugate to a subgroup of \(H\) we have \(\rank{S_{\ideal p}} \leq \rank H\).

  For claim~(\labelcref{supports:centre}), recall that the centre of \(G\) is contained in any maximal torus.  So the assumption implies that \(\ideal p\subseteq \Rep G\) can be obtained as the inverse image of some ideal in \(\Rep{(\centre G)}\) under a restriction map that factors as
  \[
    \Rep G \hookrightarrow \Rep T \twoheadrightarrow \Rep{(\centre G)}.
  \]
  Let us again write \(\ideal p_T\) for the intermediate preimage in \(\Rep T\), which by construction lies over \(\ideal p\).
  As the Weyl group \(\Weyl\) acts trivially on \(\centre G\), the restriction \(\Rep T \twoheadrightarrow \Rep{(\centre G)}\) is equivariant with respect to the canonical \(\Weyl\)-action on \(\Rep T\) and the trivial action on \(\Rep{(\centre G)}\). So \(\ideal p_T\subset \Rep T\) is \(\Weyl\)-invariant, and it follows from \Cref{RG-for-connected-G}~(\labelcref{RG-for-connected-G:RT-primes}) that \(\ideal p_T\) is the only ideal over \(\ideal p\).
\end{proof}

For the biquotient conditions appearing in the following two examples, see the introduction or \Cref{def:interrank} immediately below.
\begin{example}[Strict biquotient condition]\label{eg:strict-biquotient-condition-translated-into-commutative-algebra}
  If \((G,H_1,H_2)\) satisfy the strict biquotient condition, then the only prime ideals \(\ideal p\) of \(\Rep G\) supporting both \(\Rep H_1\) and \(\Rep H_2\)
  are the prime ideals \(\I G\) and \(\Ip G\) from \Cref{eg:max-ideals-of-RG}.
  Indeed, \Cref{supports}~(\labelcref{supports:intersection}) identifies the intersection \(\support[\Rep G]{\Rep H_1}\cap \support[\Rep G]{\Rep H_2}\) with \(\support[\Rep G]{\Rep(1)}\), and by the equivalence (b~\(\iff\)~c) of \Cref{prop:segal} this latter support consists precisely of the primes containing \(\I G\).
\end{example}
\begin{example}[Lax biquotient condition]\label{eg:lax-biquotient-condition-translated-into-commutative-algebra}
  If \((G,H_1,H_2)\) satisfy the lax biquotient condition, then any prime ideal \(\ideal p\) of \(\Rep G\) supporting both \(\Rep H_1\) and \(\Rep H_2\) also supports \({\Rep{(\centre G)}}\).
  Indeed, by \Cref{supports}~(\labelcref{supports:intersection}), any such prime \(\ideal p\) lies in \(\support[\Rep G]{\Rep{(H_1 \cap gH_2g^{-1})}}\) for some \(g\in G\), each of the intersections \(H_1 \cap gH_2g^{-1}\) is contained in the centre \(\centre{G}\) by assumption, and hence \(\ideal p\) lies in \(\support[\Rep G]{\Rep{(\centre G)}}\) by \Cref{supports}~(\labelcref{supports:subgroups}).
\end{example}

\section{Biquotient pairs}
In this section, we gather preliminary results on biquotients that facilitate certain reduction steps in the proof of \Cref{thm:Tor-of-pairs}.  We begin by recalling the key definitions from the introduction.

\begin{defn}\label{def:interrank}
  A triple \((G,H_1,H_2)\) consisting of a compact Lie group \(G\) and two closed subgroups \(H_1\) and \(H_2\) of \(G\) satisfies the \textbf{strict biquotient condition} if \(H_1\cap gH_2g^{-1} = \{1\}\) for every \(g\in G\).  It satisfies the \textbf{lax biquotient condition} if \(H_1\cap gH_2g^{-1}\) is central in \(G\) for every \(g\in G\). The \textbf{intersection rank} of \(H_1\) and \(H_2\) in \(G\) is defined as
  \[
    \interrank{H_1}{H_2} := \max_{g\in G} (\rank{H_1 \cap gH_2g^{-1}}).
  \]
  A \textbf{biquotient manifold} is a double coset space \(\biquotient{G}{H_1}{H_2}\) associated with a triple satisfying the (strict or) lax biquotient condition, with its induced smooth structure.
\end{defn}

\subsection{Reduction to diagonal}

A biquotient manifold \(\biquotient{G}{H_1}{H_2}\) can equivalently be viewed as a quotient
\[
  (H_1\times H_2)\bibackslash G := \biquotient{(G\times G)}{(H_1\times H_2)}{\Delta G},
\]
where \(\Delta G\) denotes the diagonal subgroup of \(G\times G\). Explicitly, the isomorphism is given by \(g \mapsto (g,1)\), with inverse \(g_1g_2^{-1}\mapsfrom
(g_1,g_2)\).

\begin{prop}\label{diagonal-reformulation}
  Let \(G\) be a compact Lie group with closed subgroups \(H_1\) and~\(H_2\).
  Denote as before by \(\centre{G}\) the centre of \(G\), and by \(\Delta\colon G \hookrightarrow G\times G\) the inclusion of the diagonal subgroup.
  \begin{compactenum}[(a)]
  \item
    \label{diagonal-reformulation:biquotient-condition}
    The triple \((G,H_1,H_2)\) satisfies the lax\slash strict biquotient condition if and only if the triple \((G\times G,H_1\times H_2, \Delta G)\) does.
  \item
    \label{diagonal-reformulation:interrank}
    The intersection ranks satisfy the equality
    \[\interrank{H_1}{H_2} = \interrank{H_1\times H_2}{\Delta G}.\]%
  \item
    \label{diagonal-reformulation:Tor}
    Considering \(\Rep G\) as an \(\Rep{(G\times G)}\)-module via restriction along \(\Delta\),
    we have canonical isomorphisms of abelian groups 
    \[
      \Tor_i^{\Rep(G\times G)}\left(\Rep(H_1\times H_2),\Rep G\right) \cong \Tor_i^{\Rep G}(\Rep H_1,\Rep H_2)
    \]
    in all degrees~\(i\). Likewise, for any prime ideal \(\ideal p \subset \Rep G\), we have isomorphisms of abelian groups
    \[
      \Tor_i^{\Rep(G\times G)}(\Rep(H_1\times H_2),\Rep G)_{(\Delta^*)^{-1}(\ideal p)} \cong \Tor_i^{\Rep G}(\Rep H_1,\Rep H_2)_{\ideal p}
    \]
    in all degrees~\(i\).
  \item
    \label{diagonal-reformulation:support}
    For a prime ideal \(\ideal p \subset \Rep G\),
    \begin{alignat*}{13}
      &\text{if } &\ideal p&\in\support[\Rep G](\Rep{(\centre G)}),\\
      &\text{then }& (\Delta^*)^{-1}(\ideal p) &\in \support[\Rep(G\times G)]{\Rep{(\centre{(G\times G)})}}.
    \end{alignat*}
  \end{compactenum}
\end{prop}
\begin{proof}
  For claims~(\labelcref{diagonal-reformulation:biquotient-condition}) and (\labelcref{diagonal-reformulation:interrank}), note that \((H_1\times H_2) \cap (g_1,g_2)\Delta G(g_1,g_2)^{-1}\) is isomorphic to \(H_1\cap g_1g_2^{-1} H_2 g_2g_1^{-1} \) under projection to the first coordinate, and that \(\centre{(G\times G)} = \centre G \times \centre G\).

  \newcommand{\catmod}[1]{#1\mathrm{-mod}}
  \newcommand{\GxG}{G^2}

  The first isomorphism of Tor-modules in claim~(\labelcref{diagonal-reformulation:Tor}) can be obtained from \Cref{Tor-CoR-II} by taking \(\Gamma\), \(\Lambda\), \(\Sigma\), \(A\), \(B\) and \(C\) as indicated in the following diagram:
  \[
    \begin{tikzcd}[column sep=tiny,row sep=tiny]
      \Rep{H_1} \arrow[dr,dash]\arrow[ddrr,dash,bend right] && \arrow[dl,dash] \Rep{H_2} \arrow[dr,dash] && \Rep{G} \arrow[dl,equal]\arrow[ddll,equal, bend left]\\
      & \ZZ && \Rep{G} \\
      &&  \Rep{G}
    \end{tikzcd}
  \]
  The Tor-modules assumed to vanish in positive degrees in \Cref{Tor-CoR-II} are \(\Tor_*^\ZZ(\Rep{H_1},\Rep{H_2})\) and \(\Tor_*^{\Rep{G}}(\Rep{H_2},\Rep{G})\).
  In both cases, this vanishing is clear because at least one of the modules is free over the considered base ring. So we obtain isomorphisms of \(\Rep{G}\otimes\Rep{G}\)-modules
  \[
    \Tor_i^{\Rep{G}\otimes\Rep{G}}(\Rep{H_1}\otimes\Rep{H_2},\Rep{G}) \cong \Tor_i^{\Rep{G}}(\Rep{H_1},\Rep{H_2})
  \]
  in all degrees~\(i\).
  To simplify notation, let us temporarily write \(\GxG\) for \(G\times G\).
  Recall that we have natural isomorphisms \(\Rep{(H_1\times H_2)}\cong \Rep{H_1}\otimes\Rep{H_2}\) and, in particular, \(\Rep(\GxG) \cong \Rep G \otimes \Rep G\).
  Under the latter isomorphism, the ring multiplication \(\Rep G \otimes \Rep G\to \Rep G\) gets identified with the restriction \(\Delta^*\colon \Rep{(\GxG)} \to \Rep G\).
  We can therefore rewrite the above isomorphisms in the form asserted in the \namecref{diagonal-reformulation}.
  The second, localized claim follows immediately.

  For claim~(\labelcref{diagonal-reformulation:support}), recall again that \(\centre{(\GxG)} = \centre{G}\times \centre{G}\).
  The surjection \(\Delta^*\colon \Rep(\GxG)\twoheadrightarrow \Rep G\) restricts to a surjection \(\Rep(\centre{(\GxG)})\to \Rep(\centre G)\), which we can view as a surjection of \(\Rep(\GxG)\)-modules.
  Localizing at \((\Delta^*)^{-1}(\ideal p)\) for some prime ideal \(\ideal p\) of \(\Rep G\) yields a surjection \(\Rep(\centre{(\GxG)})_{(\Delta^*)^{-1}(\ideal p)} \to (\Rep(\centre G))_{\ideal p}\).
  So if \(\Rep(\centre G)\) is supported at \(\ideal p\), then \(\Rep(\centre{(\GxG)})\) must be supported at \((\Delta^*)^{-1}(\ideal p)\).
\end{proof}

\subsection{The biquotient condition can be checked on finitely many intersections}
By definition, the biquotient condition is a condition on infinitely many intersections \(H_1\cap gH_2g^{-1}\). We show here that only finitely many intersections need to be taken into account, and that, similarly, the intersection rank of \(H_1\) and \(H_2\) can be computed from only finitely many intersections.  Both arguments proceed as follows:  we can replace \(H_1\) and \(H_2\) by tori (\Cref{interrank-can-be-computed-on-Tori}), and for tori we need only consider conjugates by elements of the Weyl group of~\(G\) (\Cref{biquotient-condition-of-tori-can-be-checked-from-Weyl-conjugates}).

\begin{prop}\label{interrank-can-be-computed-on-Tori}\label{biquotient-condition-can-be-checked-on-tori}
  Consider a compact Lie group \(G\).
  For any pair of closed connected subgroups \(H_1\) and \(H_2\), with maximal tori \(T_1\) and \(T_2\), respectively,
  \(\interrank{H_1}{H_2} = \interrank{T_1}{T_2}\).
  Moreover, the triple \((G,H_1,H_2)\) satisfies the lax\slash strict biquotient condition if and only if the triple \((G,T_1,T_2)\) does.
\end{prop}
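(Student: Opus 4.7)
The plan is to prove a single \emph{conjugation lemma}: for any \(g\in G\) and any element \(x\in H_1\cap gH_2g^{-1}\), there exist \(h_1\in H_1\) and \(h_2\in H_2\) such that, writing \(g':=h_1^{-1}gh_2\), the conjugate \(h_1^{-1}xh_1\) lies in \(T_1\cap g'T_2(g')^{-1}\).  The proof uses only the classical fact that every element of a compact connected Lie group lies in some maximal torus and that all maximal tori are conjugate: this yields \(h_1\in H_1\) with \(h_1^{-1}xh_1\in T_1\), and analogously \(h_2\in H_2\) with \(h_2^{-1}g^{-1}xgh_2\in T_2\); a direct computation then shows \((g')^{-1}(h_1^{-1}xh_1)g'=h_2^{-1}g^{-1}xgh_2\).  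The same argument, applied to an ambient torus rather than a single element, shows that any subtorus \(S\subseteq H_1\cap gH_2g^{-1}\) admits \(h_1\), \(h_2\) with \(h_1^{-1}Sh_1\subseteq T_1\cap g'T_2(g')^{-1}\); for this one only needs that any torus of a compact connected Lie group is contained in a maximal torus.

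The rank equality then follows easily.  The inequality \(\interrank{T_1}{T_2}\leq \interrank{H_1}{H_2}\) is immediate from the inclusion \(T_1\cap gT_2g^{-1}\subseteq H_1\cap gH_2g^{-1}\).  For the reverse inequality, I apply the torus version of the lemma to a maximal torus \(S\) of the identity component of \(H_1\cap gH_2g^{-1}\).  By the convention of \Cref{def:rank}, \(\rank S=\rank(H_1\cap gH_2g^{-1})\), and the lemma produces a \(g'\) with \(\rank(T_1\cap g'T_2(g')^{-1})\geq \rank S\), yielding the required bound.

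For the biquotient conditions, the implications from \((G,H_1,H_2)\) to \((G,T_1,T_2)\) are immediate from \(T_i\subseteq H_i\).  For the converse, suppose \((G,T_1,T_2)\) satisfies the lax (respectively strict) biquotient condition and take \(x\in H_1\cap gH_2g^{-1}\).  The element version of the lemma produces \(h_1\), \(h_2\), \(g'\) with \(h_1^{-1}xh_1\in T_1\cap g'T_2(g')^{-1}\).  In the strict case this conjugate equals \(1\), so \(x=1\).  In the lax case the conjugate is central in \(G\); since central elements are fixed by every inner automorphism, \(x=h_1(h_1^{-1}xh_1)h_1^{-1}=h_1^{-1}xh_1\) is itself central.

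I anticipate no serious obstacle: the statement is a purely Lie-theoretic manipulation resting on standard conjugacy theorems for maximal tori.  The only mild subtlety is that \(H_1\cap gH_2g^{-1}\) may be disconnected, but this is accommodated by the convention in \Cref{def:rank} that the rank of a compact group equals the rank of its identity component.
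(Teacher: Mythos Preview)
Your proof is correct and is essentially the same argument as the paper's: both find \(h_1\in H_1\), \(h_2\in H_2\) conjugating \(x\) (respectively a torus \(S\)) into \(T_1\) and \(g^{-1}xg\) into \(T_2\), and then set \(g':=h_1^{-1}gh_2\); the paper simply does this inline rather than isolating it as a separate lemma.
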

\begin{proof}
  By definition, \(\interrank{H_1}{H_2}\) is the maximum over the ranks of all tori \(S\) contained in the intersections \(H_1\cap gH_2g^{-1}\), where \(g\) ranges over the elements of \(G\).  Any such torus \(S\) is conjugate to a subgroup of \(T_1\) and a subgroup of \(T_2\), hence conjugate to a subgroup of \(T_1\cap g'T_2g'^{-1}\) for some \(g'\in G\).
  This shows that the interranks agree.
   If \(H_1\) and \(H_2\) satisfy the lax or strict biquotient condition, then clearly so do \(T_1\) and \(T_2\).  Conversely, suppose \(T_1\) and \(T_2\) satisfy the lax (or strict) biquotient condition, and suppose that \(h\in H_1\cap gH_2g^{-1}\) for some \(g\in G\).  As any element of \(H_1\) is conjugate to an element of \(T_1\), and any element of \(H_2\) is conjugate to an element of \(T_2\), we find that \(h = h_1t_1h_1^{-1} = gh_2t_2h_2^{-1}g^{-1}\) for certain \(h_i\in H_i\) and \(t_i\in T_i\).  It follows that \(t_1 \in T_1 \cap g'T_2g'^{-1}\) for \(g' := h_1^{-1}gh_2\), hence \(t_1\) is central (or trivial) by our assumption on \(T_1\) and \(T_2\).  So \(h = h_1t_1h_1^{-1} = t_1\) is central (or trivial, respectively).
\end{proof}

Suppose now that \(G\) is compact and \emph{connected}, with a chosen maximal torus \(T\) and associated Weyl group \(\Weyl = (\normalizer[G]{T})/T\) as in \cref{sec:connected-groups}. Recall that \(\Weyl\) is finite.
Given a subgroup \(S\subseteq T\), we write \(w(S)\) for the conjugate \(hSh^{-1}\) given by an arbitrary representative \(h \in \normalizer[G]{T}\) of \(w\in \Weyl\).

\begin{lem}\label{reducing-conjugates-to-Weyl-conjugates}
  Consider a compact connected Lie group \(G\) with a maximal torus \(T\) and associated Weyl group \(\Weyl\).
  Suppose \(T_1\) and \(T_2\) are subtori of \(T\).
  For each \(g\in G\), there exists a \(w\in\Weyl\) such that \(T_1\cap gT_2g^{-1} \subseteq T_1 \cap w(T_2)\).
\end{lem}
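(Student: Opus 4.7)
The plan is to show that the single $g$-conjugate $g^{-1}Sg$ of $S := T_1 \cap gT_2g^{-1}$ can already be realised as an $\normalizer[G]{T}$-conjugate: concretely, we will produce $n \in \normalizer[G]{T}$ with $g^{-1}Sg = n^{-1}Sn$. Once this is done, setting $w := [n] \in \Weyl$ gives $n^{-1}Sn \subseteq T_2$ (since by construction $g^{-1}Sg \subseteq T_2$), hence $S \subseteq n T_2 n^{-1} = w(T_2)$, and combining with the trivial inclusion $S \subseteq T_1$ yields the claim.

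The argument for producing $n$ mimics the classical proof that $G$-conjugate elements of $T$ are already $\Weyl$-conjugate, but applied to the subgroup $g^{-1}Sg \subseteq T$ in place of a single element. Consider the (possibly disconnected) compact Lie group $H := \centralizer[G]{(g^{-1}Sg)}$. The maximal torus $T$ lies in $H$ because $g^{-1}Sg \subseteq T$ is abelian, and $g^{-1}Tg$ lies in $H$ because conjugating the commutation relation between $T$ and $S$ by $g^{-1}$ shows that $g^{-1}Tg$ centralizes $g^{-1}Sg$. Since both $T$ and $g^{-1}Tg$ are maximal tori of $G$, they are also maximal tori of $H$, and are therefore conjugate within the identity component of $H$: there exists $z \in H$ with $g^{-1}Tg = zTz^{-1}$. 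This forces $n := gz \in \normalizer[G]{T}$.

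It remains to verify that $n^{-1}Sn = g^{-1}Sg$. Writing $g = nz^{-1}$ yields $g^{-1}Sg = z(n^{-1}Sn)z^{-1}$, so $g^{-1}Sg$ is the image of $A := n^{-1}Sn$ under conjugation by $z$. Now $z \in H$ centralizes $g^{-1}Sg = zAz^{-1}$ pointwise; given $a \in A$, applying this to the element $zaz^{-1} \in zAz^{-1}$ gives $z(zaz^{-1})z^{-1} = zaz^{-1}$, which simplifies to $zaz^{-1} = a$. So $z$ centralizes $A$ pointwise as well, and consequently $A = zAz^{-1} = g^{-1}Sg$, as needed.

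The main subtlety to watch for is that $S$ need not be connected — intersections of subtori of $T$ are routinely disconnected — so $H$ may itself be disconnected. The proof sidesteps this by invoking only the conjugacy of maximal tori within the identity component, which is valid in arbitrary compact Lie groups. Everything else (the centralizer argument and the pointwise-centralization computation) is purely formal and insensitive to the topology of $S$.
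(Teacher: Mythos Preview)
Your proof is correct and follows essentially the same centralizer argument as the paper: both produce an element of \(\normalizer[G]{T}\) by conjugating one maximal torus onto another inside the centralizer of the intersection (or its \(g^{-1}\)-conjugate). The only cosmetic difference is that the paper works with \(S := T\cap gT_2g^{-1}\) and an element \(f\in\centralizer[G]{S}\), whereas you work on the \(g^{-1}\)-side with \(z\in\centralizer[G]{(g^{-1}Sg)}\); these are related by \(f = gzg^{-1}\), and your \(n = gz\) coincides with the paper's \(fg\).
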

\begin{proof}
  It suffices to show that, for every element \(g\in G\), there exists an element \(w\in\Weyl\) such that \(T\cap gT_2g^{-1} \subseteq w(T_2)\).
  We show this by adapting arguments from the proof of \cite{BtD:Lie}*{IV, Lemma~2.5}. Consider \(S:= T\cap gT_2g^{-1}\).  Conjugation by \(g\) restricts to an isomorphism between centralizers:
  \(
    c(g)\colon \centralizer[G]{(g^{-1}Sg)} \to \centralizer[G]{S}
  \).
  As \(g^{-1}Sg\subseteq T\), and as \(T\) is abelian, \(T\subseteq \centralizer[G]{(g^{-1}Sg)}\), so \(c(g)(T)\subseteq\centralizer[G]{S}\).
  Likewise, \(T\subseteq\centralizer[G]{S}\).
  So \(c(g)(T)\) and \(T\) are two maximal tori contained in (the identity component of) \(\centralizer[G]{S}\), and hence conjugate in \(\centralizer[G]{S}\).  So there exists some element \(f\in \centralizer[G]{S}\) such that \(fgTg^{-1}f^{-1} = T\), \ie such that \(fg\in \normalizer[G]{T}\).  Take \(w\) to be the image of \(fg\) in \(\Weyl\).  The equality \(S = fSf^{-1}\) now expands to
  \(
  T\cap gT_2g^{-1} = f(T\cap gT_2g^{-1})f^{-1}
  \),
  and the group on the right is clearly contained in \(fg T_2 g^{-1}f^{-1} = w(T_2)\).
\end{proof}

The following \namecref{interrank-of-tori-can-be-computed-from-Weyl-conjugates} is immediate from \Cref{reducing-conjugates-to-Weyl-conjugates}.
\begin{prop}\label{interrank-of-tori-can-be-computed-from-Weyl-conjugates}\label{biquotient-condition-of-tori-can-be-checked-from-Weyl-conjugates}
  Consider a compact connected Lie group \(G\) with a maximal torus \(T\) and associated Weyl group \(\Weyl\).
  For any pair of subtori \(T_1\) and \(T_2\) of \(T\),
  \(
  \interrank{T_1}{T_2} = \max_{w\in\Weyl}(\rank{T_1\cap w(T_2)})
  \).
  Moreover, the triple \((G,T_1,T_2)\) satisfies the lax/strict biquotient condition if and only if the intersection \(T_1\cap w(T_2)\) is trivial/central for each \(w\in\Weyl\).
  \qed
\end{prop}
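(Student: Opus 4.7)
The plan is to deduce both assertions directly from \Cref{reducing-conjugates-to-Weyl-conjugates}, in each case handling the two inclusions/implications separately.

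For the equality of intersection ranks, the inequality \(\interrank{T_1}{T_2} \geq \max_{w\in\Weyl}(\rank(T_1\cap w(T_2)))\) is trivial, because each Weyl conjugate \(w(T_2) = hT_2h^{-1}\) is of the form \(gT_2g^{-1}\) for a specific \(g = h\in\normalizer[G]{T}\subseteq G\). For the reverse inequality, take any \(g\in G\) realising the maximum, and apply \Cref{reducing-conjugates-to-Weyl-conjugates} to produce a \(w\in\Weyl\) with \(T_1\cap gT_2g^{-1} \subseteq T_1\cap w(T_2)\); then \(\rank(T_1\cap gT_2g^{-1}) \leq \rank(T_1\cap w(T_2))\), so the supremum over \(G\) is attained already over \(\Weyl\).

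For the biquotient statements, the ``only if'' direction is again immediate: Weyl conjugates are a special case of conjugates by elements of \(G\), so if all intersections \(T_1\cap gT_2g^{-1}\) are trivial (resp.\ central), then in particular so are the intersections \(T_1\cap w(T_2)\). For the ``if'' direction, assume each \(T_1\cap w(T_2)\) with \(w\in\Weyl\) is trivial (resp.\ central), and let \(g\in G\) be arbitrary. By \Cref{reducing-conjugates-to-Weyl-conjugates}, \(T_1\cap gT_2g^{-1} \subseteq T_1\cap w(T_2)\) for some \(w\in\Weyl\), and a subgroup of a trivial (resp.\ central) group is itself trivial (resp.\ central).

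There is no real obstacle here — the content of the proposition is entirely captured by \Cref{reducing-conjugates-to-Weyl-conjugates}, and the remaining work is just unwinding definitions. This is why the excerpt flags the result as ``immediate'' and presents it without a proof.
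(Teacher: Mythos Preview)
Your argument is correct and matches the paper's intended approach: the paper explicitly marks the proposition as ``immediate from \Cref{reducing-conjugates-to-Weyl-conjugates}'' and omits any further proof, and your write-up simply spells out that immediacy by combining the trivial direction (Weyl conjugates are particular \(G\)-conjugates) with the inclusion provided by the lemma.
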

Note that we can always arrange for the maximal tori \(T_1\) and \(T_2\) of two closed connected subgroups \(H_1\) and \(H_2\) to be contained in \(T\) by passing to conjugates of \(H_1\) and \(H_2\) if necessary.

\subsection{Enlarging tori while keeping intersections almost intact}

The ranks of two closed subgroups \(H_1\) and \(H_2\) of a compact Lie group \(G\) are clearly bounded by the rank of \(G\) in the sense that
\[
  \rank{G} \geq \rank{H_1} + \rank {H_2} - \interrank{H_1}{H_2}.
\]
The following \namecref{enlarging-tori} shows that, when \(H_1\) and \(H_2\) are tori, we can always enlarge \(H_1\) in such a way that the above inequality becomes an equality, but without affecting the intersection rank.

\begin{prop}\label{enlarging-tori}
  Consider a compact connected Lie group \(G\).
  Given two tori \(T_1,T_2\subseteq G\), there exists a torus \(T_1^+\) containing \(T_1\) such that
  \begin{flalign*}
    &\rank G = \rank T_1^+ + \rank T_2 - \interrank{T_1^+}{T_2}  \quad \text{ and}\\
    &          \interrank{T_1^+}{T_2} = \interrank{T_1}{T_2}.
  \end{flalign*}
\end{prop}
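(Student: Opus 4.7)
The plan is to dualise via \Cref{subgroups-of-tori}(c): subtori of $T$ correspond bijectively, in an inclusion-reversing manner, to direct summands of the character lattice $L := \Char{T} \cong \ZZ^r$, where $r := \rank{G}$. Let $K_1, K_2 \subseteq L$ be the direct summands corresponding to $T_1, T_2$, and for each $w \in \Weyl$ write $M_w := K_{w(T_2)}$. By \Cref{biquotient-condition-of-tori-can-be-checked-from-Weyl-conjugates}, $\interrank{T_1}{T_2} = r - d$, where $d := \min_{w \in \Weyl} \rank(K_1 + M_w)$.

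A subtorus $T_1^+$ with $T_1 \subseteq T_1^+ \subseteq T$ corresponds dually to a direct summand $K_1^+$ of $L$ contained in $K_1$. Using the identities $K_{S_1 \cap S_2} = K_{S_1} + K_{S_2}$ and $\rank K_S = r - \rank S$ from \Cref{subgroups-of-tori}, a short calculation translates the two equations in the statement into:
\begin{compactenum}[(i)]
\item $K_1^+ \cap M_w = 0$ for every $w \in \Weyl$, and
\item $\rank K_1^+ = d - \rank K_2$.
\end{compactenum}
Indeed, the first equation in the proposition forces $\min_{w \in \Weyl} \rank(K_1^+ + M_w)$ to equal the upper bound $\rank K_1^+ + \rank K_2$, which pinches each $\rank(K_1^+ + M_w)$ to exactly this value and hence gives (i); condition (ii) then follows from the second equation. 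The inequalities $\rank K_2 \leq d \leq \rank K_1 + \rank K_2$, immediate from the definition of $d$, ensure that the prescribed rank is feasible within $K_1$.

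The remaining existence claim is a standard linear-algebra question over $\QQ$. Writing $V_1 := K_1 \otimes \QQ$, $N_w := M_w \otimes \QQ$, and $U_w := V_1 \cap N_w$, we seek a subspace $V_1^+ \subseteq V_1$ of dimension $k := d - \rank K_2$ meeting each $U_w$ only in zero; then $K_1^+ := V_1^+ \cap L$ is automatically saturated in $L$, hence a direct summand contained in $K_1$. The definition of $d$ directly yields $\dim U_w + k \leq \dim V_1$ for every $w$, so each $U_w$ has codimension at least $k$ in $V_1$, and $V_1^+$ can be built inductively by extending one vector at a time, each new vector chosen outside the finite union $\bigcup_w (U_w + V_1^{(j-1)})$, which is a proper subset of $V_1$ since $\QQ$ is infinite. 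The main obstacle in this plan is bookkeeping: verifying that the two equations in the statement translate precisely to conditions (i) and (ii); once that is in hand, the linear-algebra step is routine.
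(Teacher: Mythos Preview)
Your argument is correct and follows essentially the same route as the paper: dualise subtori to direct summands of the character lattice via \Cref{subgroups-of-tori}, reduce via \Cref{biquotient-condition-of-tori-can-be-checked-from-Weyl-conjugates} to finitely many Weyl conjugates, rationalise to a linear-algebra problem over \(\QQ\), and then intersect back with the lattice to obtain a saturated subgroup.  The translation you give to conditions (i) and (ii) matches the paper's target exactly (the paper phrases it as finding \(W_1^-\subseteq W_1\) of the prescribed dimension with \(W_1^-\cap W_2^{(i)}=0\) for each~\(i\)).  The only genuine difference is in how the existence of \(V_1^+\) is established: the paper argues that, for each \(w\), the condition \(V_1^+\cap N_w=0\) defines a non-empty Zariski-open subset of the Grassmannian \(\Grassmannian[\QQ]{k}{V_1}\), and takes \(V_1^+\) in the intersection of these opens; you instead build \(V_1^+\) one vector at a time, avoiding at each step a finite union of proper subspaces.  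Your approach is the more elementary of the two, avoiding any mention of the Zariski topology, while the paper's Grassmannian argument makes the genericity of admissible \(V_1^+\) explicit.
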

\begin{rem}\label{side-rem:fixed-maximal-torus}
  The proof will show that, given a maximal torus \(T\) of \(G\) containing \(T_1\),  we can choose \(T_1^+\) to be intermediate between \(T_1\) and \(T\), \ie such that \(T_1 \subseteq T_1^+ \subseteq T\).
\end{rem}
\begin{rem}\label{enlargement-destroys-biquotient-condition}
  Even when the triple \((G,T_1,T_2)\) satisfies the strict biquotient condition, it is not in general possible to find a torus \(T_1^+\) satisfying the above conditions and such that \((G,T_1^+,T_2)\) still satisfies one of the biquotient conditions.  In passing from \(T_1\) to \(T_1^+\), it can happen that the intersections \(T_1^+\cap w(T_2)\) acquire additional, non-central points.  For example, consider the rank-two group \(G := \mathrm{SU}(3)\) with its standard diagonal torus \(T = \{\diag (a,b,a^{-1}b^{-1}) \}\), the trivial subtorus \(T_1 := \{\diag (1,1,1)\}\) and the rank-one subtorus \(T_2 := \{\diag (a,a^{-1},1)\} \) defined by one of the simple coroots.  Clearly, the triple \((G,T_1,T_2)\) satisfies the strict biquotient condition.  However, any rank-one subtorus \(T_1^+ \subseteq T\) intersects at least one of the conjugates \(T_2\), \(T_2' := \{\diag (1,a,a^{-1})\}\) and \(T_2'':= \{\diag (a, 1, a^{-1})\}\) of \(T_2\) non-trivially, in a non-central point, so that \((G,T_1^+,T_2)\) does not even satisfy the lax biquotient condition.
\end{rem}
\begin{proof}[Proof of \Cref{enlarging-tori}]
  \newenvironment{tagged}[1]{%
    \par%
    \smallskip
    \begin{tabular}{m{1.5em}@{\hspace{1em}}m{0.85\linewidth}}%
      \centering (#1) & %
                        }{%
    \end{tabular}%
    \par%
    \smallskip
  }
  Choose a maximal torus \(T\) of \(G\) such that \(T_1 \subseteq T\). Replacing \(T_2\) by a conjugate subgroup if necessary, we may assume that also \(T_2\subseteq T\).
  Then by \Cref{interrank-of-tori-can-be-computed-from-Weyl-conjugates},
  \[ \interrank{T_1}{T_2} = \max_{w\in\Weyl}\rank{(T_1\cap w(T_2))}.\]
  Let us abbreviate the ranks as follows: \(r := \rank T\), \(r_1 := \rank T_1\), \(r_2 := \rank T_2\), \(r' := \interrank{T_1}{T_2}\).
  In view of \Cref{reducing-conjugates-to-Weyl-conjugates}, it suffices to construct a torus \(T_1^+\) intermediate between \(T_1\) and \(T\) and of rank \(r_1^+ := r - r_2 + r'\) such that
  \(
    \rank{(T_1^+\cap w(T_2))} = r'
  \)
  for each of the finitely many Weyl group elements \(w\in \Weyl\).

  The proof works by translating the statement from subgroups of \(T\) to subgroups of \(\ZZ^r\) to vector subspaces of \(\QQ^r\), where it can be solved essentially by linear algebra. The following paragraphs provide some details.

  First, let us abstract away the Weyl group and formulate our claim more generally as follows.
    \begin{tagged}{T}
    Suppose \(T_1\) and \(T_2^{(1)},\dots,T_2^{(n)}\) are subtori of a torus \(T\), with all \(T_2^{(i)}\) of equal rank.  Let \(r := \rank T\), \(r_1 := \rank T_1\), \(r_2 := \rank T_2^{(i)}\), \(r' := \max_i\rank(T_1 \cap T_2^{(i)})\) and \(r_1^+ := r-r_2+r'\).  There exists a torus \(T_1^+\) such that \(T_1\subseteq T_1^+ \subseteq T\), \(r_1^+ = \rank T_1^+\) and, for each \(i\), \(r' = \rank(T_1^+ \cap T_2^{(i)})\).
  \end{tagged}
  Using \Cref{subgroups-of-tori}, (T) may be translated into the following equivalent statement:
  \begin{tagged}{Z}
    Suppose \(K_1\) and \(K_2^{(1)},\dots,K_2^{(n)}\) are direct summands of \(\ZZ^r\), with all \(K_2^{(i)}\) of equal rank.  Let \(c_1 := \rank K_1\), \(c_2 := \rank K_2^{(i)}\), \(c' := \min_i\rank(K_1 + K_2^{(i)})\) and \(c_1^- := c' - c_2\).  There exists a direct summand \(K_1^-\) of \(\ZZ^r\) such that  \(K_1^- \subseteq K_1\) with \(c_1^- = \rank K_1^-\) and, for each \(i\), \(c' = \rank(K_1^- + K_2^{(i)})\).
  \end{tagged}
  We claim that it suffices to prove the following analogous statement (Q) about rational vector spaces:
  \begin{tagged}{Q}
    Suppose \(W_1\) and \(W_2^{(1)},\dots,W_2^{(n)}\) are vector subspaces of \(\QQ^r\), with all \(W_2^{(i)}\) of equal dimension.   Let \(c_1 := \dim W_1\), \(c_2 := \dim W_2^{(i)}\), \(c' := \min_i\dim(W_1 + W_2^{(i)})\) and \(c_1^- := c' - c_2\).  There exists a vector subspace \(W_1^- \subseteq W_1\) such that \(c_1^- = \dim W_1^-\) and, for each \(i\), \(c' = \dim(W_1^- + W_2^{(i)})\).
  \end{tagged}
  To deduce (Z) from (Q), note that there is a close relation between vector subspaces of \(\QQ^r\) and subgroups of \(\ZZ^r\).  For a vector subspace \(W\subseteq \QQ^r\), the intersection \(W\cap \ZZ^r\) is a direct summand of \(\ZZ^r\), of rank equal to the dimension of~\(W\). Conversely, from \emph{any} subgroup \(K\subseteq \ZZ^r\), we can obtain the vector subspace \(K\otimes_{\ZZ}\QQ \subseteq \QQ^r\) of dimension equal to the rank of \(K\). We obtain in this way a bijection between vector subspaces of \(\QQ^r\) and direct summands of \(\ZZ^r\).  Moreover, for arbitrary subgroups \(K_1, K_2 \subseteq \ZZ^r\), we have \((K_1 + K_2)\otimes\QQ = K_1\otimes\QQ + K_2 \otimes \QQ\).  (Note that \(K_1 + K_2\) may not be a direct summand even when \(K_1\) and \(K_2\) are.)
Thus, given direct summands \(K_1\) and \(K_2^{(i)}\) as in (Z), we may consider the vector spaces \(W_1 := K_1\otimes\QQ\) and \(W_2^{(i)} := K_2^{(i)}\otimes\QQ\) in (Q), obtain a vector space \(W_1^-\subseteq W_1\), and then take \(K_1^- := W_1^-\cap \ZZ^r\).  As noted, this is a direct summand of \(\ZZ^r\), clearly contained in \(K_1\), and the ranks work out as intended.

Next, we reduce (Q) to the case \(n=1\):
\begin{tagged}{Q$^\circ$}
  Let \(W_1\) and \(W_2\) be vector subspaces of \(\QQ^r\), with \(c_i := \dim W_i\).
  For any integer \(c_1^-\) such that \(0\leq c_1^-\leq \dim(W_1 + W_2) - c_2\),
  there exists a vector subspace \(W_1^-\subseteq W_1\) of dimension \(c_1^-\) such that
  \(\dim(W_1^- + W_2) = c_1^- + c_2\).
  Moreover, the vector subspaces \(W_1^-\) with these properties form a (non-empty) Zariski-open subset of the rational Grassmannian \(\Grassmannian[\QQ]{c_1^-}{W_1}\).
\end{tagged}
  To deduce (Q) from (Q$^\circ$), note that for each \(i\) we have \(c_2 \leq c' \leq \dim(W_1 + W_2^{(i)})\) in (Q), so \(0 \leq c_1^- \leq \dim(W_1 + W_2^{(i)}) - c_2\).  Thus, by (Q$^\circ$), for each \(i\) the vector subspaces \(W_1^-\subseteq W_1\) with \(\dim(W_1^- + W_2^{(i)}) = c'\) form a non-empty Zariski-open subset of \(\Grassmannian[\QQ]{c_1^-}{W_1}\).  So we can pick any \(W_1^-\) that lies in the intersection of these finitely many Zariski open subsets.

  Finally, (Q$^\circ$) can be phrased more elegantly, and proved, by noting that \(\dim(W_1+W_2) - c_2 = c_1 - \dim(W_1 \cap W_2)\), and by noting that the condition on the dimension of \(W_1^-+W_2\) is equivalent to the condition \(W_1^- \cap W_2 = \{0\}\).
  Such a \(W_1^-\) exists because we can simply choose a complement \(W'\) of \(W_1\cap W_2\) in \(W_1\), and then pick an arbitrary subspace \(W_1^- \subseteq W'\) of dimension \(c_1^-\).  To see that trivial intersection with \(W_2\) is an open condition on \(\Grassmannian[\QQ]{c_1^-}{W_1}\), note that this condition on \(W_1^-\) can be rephrased as follows:  the restriction of the natural projection \(W_1\twoheadrightarrow W_1/(W_1\cap W_2)\) to \(W_1^-\) is a monomorphism.  By change of coordinates, we may assume without loss of generality that this projection is the projection onto the first \(\dim(W_1+W_2) - c_2\) coordinates.  Then the restriction is a monomorphism if and only if, for a given basis \((b_1,\dots,b_{c_1^-})\) of \(W_1^-\), one of the maximal minors is non-zero.
  These \((c_1^-\times c_1^-)\)-minors are precisely the Plücker coordinates on the Grassmannian, so this is an open condition.
\end{proof}

\subsection{Reduction to toroidal subgroups}

\begin{prop}[tori vs.\ general case]\label{red:torus}
  Consider a compact Lie group \(G\) with connected subgroups \(H_1\) and \(H_2\), and suppose \(T_1\) and \(T_2\) are maximal tori of \(H_1\) and \(H_2\), respectively.
  If
  \(\Tor_i^{\Rep G}(\Rep T_1, \Rep T_2)_{\ideal p}\) vanishes for some degree \(i\) and some prime ideal \(\ideal p\subset \Rep G\), then so does
  \(\Tor_i^{\Rep G}(\Rep H_1, \Rep H_2)_{\ideal p}\).
\end{prop}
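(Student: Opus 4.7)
The key observation is that holomorphic induction (\Cref{holomorphic-induction}), applied to the inclusion \(T_j \hookrightarrow H_j\), provides a section of the restriction map \(\Rep H_j \to \Rep T_j\).  Concretely, this yields a splitting
\[
  \Rep T_j \cong \Rep H_j \oplus M_j
\]
as \(\Rep H_j\)-modules, for some \(\Rep H_j\)-module \(M_j\).  Restricting scalars along \(\Rep G \to \Rep H_j\), this is also a direct-sum decomposition of \(\Rep G\)-modules.

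From this I would deduce that \(\Tor^{\Rep G}_*(\Rep H_1, \Rep H_2)\) is naturally a direct summand of \(\Tor^{\Rep G}_*(\Rep T_1, \Rep T_2)\), using the fact that \(\Tor\) commutes with finite direct sums in each variable.  Then, since localization at \(\ideal p\) is an exact functor and preserves direct sums, the summand relation persists after localization:
\[
  \Tor^{\Rep G}_i(\Rep H_1, \Rep H_2)_{\ideal p}
  \quad\text{is a direct summand of}\quad
  \Tor^{\Rep G}_i(\Rep T_1, \Rep T_2)_{\ideal p}.
\]
Hence the vanishing of the right-hand side implies the vanishing of the left-hand side.

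There is really no obstacle here: the whole argument rests on holomorphic induction, which promotes \(\Rep H_j\) to a retract of \(\Rep T_j\), and the rest is formal.  The only point worth flagging is that the splitting \(\Rep T_j \cong \Rep H_j \oplus M_j\) must be interpreted as a splitting of \(\Rep H_j\)-modules (not just of abelian groups) so that it descends to a splitting of \(\Rep G\)-modules after restriction of scalars; this is exactly what \Cref{holomorphic-induction} provides.
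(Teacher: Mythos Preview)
Your proof is correct and follows essentially the same approach as the paper: both rely on holomorphic induction to exhibit \(\Rep H_j\) as an \(\Rep G\)-module retract of \(\Rep T_j\), from which the vanishing is inherited. The paper handles one variable at a time and appeals to symmetry, whereas you decompose both variables simultaneously, but this is a cosmetic difference.
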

In case \(H_1\) and \(H_2\) are good, \Cref{thm:steinberg} can be used to show that the implication of the \namecref{red:torus} is even an equivalence, but we will not need this.
\begin{proof}
  By symmetry, it suffices to show how to pass from \(T_1\) to \(H_1\):  we will argue that
  \(\Tor_i^{\Rep G}(\Rep T_1,\Rep H_2) = 0\)
  implies
  \(\Tor_i^{\Rep G}(\Rep H_1,\Rep H_2) = 0\).
  As in the proof of \cite[Corollary~3.10]{SCSQ} (the corollary quoted as \Cref{cor:proj-dim} above),
  this can be seen using holomorphic induction (\Cref{holomorphic-induction}):
  the inclusion \(i\colon T_1\hookrightarrow H_1\) defines a homomorphism of \(\Rep H_1\)-modules \(i_*\) splitting \(i^*\), and homomorphisms of \(\Rep H_1\)-modules are, in particular, homomorphisms of \(\Rep G\)-modules.  So \(i^*\) induces a split injection of \(\Tor_i^{\Rep G}(\Rep H_1,\Rep H_2)\) into  \(\Tor_i^{\Rep G}(\Rep T_1,\Rep H_2)\), and the claim follows.
  Localization does not affect this argument.
\end{proof}

\section{The proof}

We now proceed to prove \Cref{thm:Tor-of-pairs}, starting from the most special case and generalizing in several steps.
We repeatedly use the fact that \(\Tor\) commutes with localization: for any prime ideal \(\ideal p\subset \Rep G\), we have canonical isomorphisms
\begin{equation}\label{eq:Tor-commutes-with-localization}
  [\Tor_i^{\Rep G}(\Rep H_1, \Rep H_2)]_{\ideal p} \cong   \Tor_i^{(\Rep G)_{\ideal p}}((\Rep H_1)_{\ideal p}, (\Rep H_2)_{\ideal p}).
\end{equation}
Recall from \Cref{supports}~(\labelcref{supports:centre}) that \(\centre{G}\) denotes the centre of~\(G\).

\begin{prop}[particular subgroups of maximal ranks]\label{maximal-rank-particular-subgroups}
  Suppose \(G\) is a good Lie group, and that \(H_1\) and \(H_2\) are connected subgroups whose ranks are maximal in the sense that 
  \begin{equation}\label{eq:max-rank-assumption-H}
    \rank H_1 + \rank H_2 - \interrank{H_1}{H_2} = \rank G.
  \end{equation}
  Assume that \(H_1\) is a torus.  Assume moreover that the restriction \(\Rep G \to \Rep H_2\) is surjective and that its kernel is generated by a regular sequence \(\sequence y = (y_1,\dots,y_{c_2})\) of length \(c_2 = \rank G - \rank H_2\).  Then
  \(
        \Tor_i^{\Rep G}(\Rep H_1, \Rep H_2)_{\ideal p} = 0
        \)
 for all  \(i > 0\) and for all prime ideals \(\ideal p \in \support[\Rep G]{\Rep{(\centre G )}}\).
\end{prop}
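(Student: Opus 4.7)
For brevity write $r=\rank G$, $r_i=\rank H_i$ and $r'=\interrank{H_1}{H_2}$, so that $c_2=r-r_2$ and \eqref{eq:max-rank-assumption-H} reads $r=r_1+r_2-r'$. The plan is to convert the Tor vanishing into a Koszul homology vanishing, and then settle the latter by a dimension count. Since $\sequence y$ is regular in $\Rep G$, the Koszul complex $\koszul{\Rep G}{\sequence y}{\Rep G}$ is a free resolution of $\Rep H_2$; tensoring over $\Rep G$ with $\Rep H_1$ and localizing at $\ideal p$ identifies
\[
  \Tor_i^{\Rep G}(\Rep H_1,\Rep H_2)_{\ideal p}
  \;\cong\; H_i\bigl(\koszul{(\Rep H_1)_{\ideal p}}{\bar{\sequence y}}{(\Rep H_1)_{\ideal p}}\bigr),
\]
where $\bar{\sequence y}$ denotes the image of $\sequence y$ in $(\Rep H_1)_{\ideal p}$. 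It will therefore suffice to show that $(\Rep H_1)_{\ideal p}$ is either zero or a Cohen-Macaulay local ring in which $(\bar{\sequence y})$ has height exactly $c_2$; a standard fact on Cohen-Macaulay local rings then yields that $\bar{\sequence y}$ is a regular sequence and its Koszul homology vanishes in positive degrees.

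The structure of $(\Rep H_1)_{\ideal p}$ is controlled by the centrality hypothesis on $\ideal p$. After conjugation if necessary, I would fix a maximal torus $T$ of $G$ containing $H_1$. By \Cref{supports}~(\labelcref{supports:centre}) a unique prime $\ideal p_T$ of $\Rep T$ lies over $\ideal p$; hence by \Cref{RG-for-connected-G}~(\labelcref{RG-for-connected-G:max-RT-primes}) the ring $(\Rep T)_{\ideal p}$ is local with maximal ideal $\ideal p_T\cdot(\Rep T)_{\ideal p}$, and because $\Rep T$ is a Laurent polynomial ring it is in fact regular local. The kernel of $\Rep T\twoheadrightarrow\Rep H_1$ is generated by a regular sequence of length $r-r_1$ (\Cref{subgroups-of-tori}~(\labelcref{subgroups-of-tori:restriction-to-subtori})). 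After localization at $\ideal p$ one of two things happens: either one of these generators becomes a unit, in which case $(\Rep H_1)_{\ideal p}=0$ and we are done, or they all lie in the maximal ideal and remain a regular sequence by flatness of localization. In the latter case $(\Rep H_1)_{\ideal p}$ is a complete-intersection quotient of a regular local ring, hence Cohen-Macaulay and local of dimension
\[
  \dim(\Rep H_1)_{\ideal p}\;=\;\dim(\Rep T)_{\ideal p}-(r-r_1)\;=\;r_1+1-\dim(\Rep G/\ideal p),
\]
using the finiteness of $\Rep G\hookrightarrow\Rep T$ together with the biequidimensionality of $\Rep G$ (\Cref{RG-biequidimensional-for-connected-G}).

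To pin down the height of $(\bar{\sequence y})$ in $(\Rep H_1)_{\ideal p}$, I would use the identification $(\Rep H_1)_{\ideal p}/(\bar{\sequence y})\cong (\Rep H_1\otimes_{\Rep G}\Rep H_2)_{\ideal p}$. Krull's height theorem gives the upper bound $c_2$, so it suffices to prove
\[
  \dim(\Rep H_1\otimes_{\Rep G}\Rep H_2)_{\ideal p}\;\leq\;\dim(\Rep H_1)_{\ideal p}-c_2.
\]
Applying \Cref{dimension-lemma} to the finite extension $\Rep G\hookrightarrow\Rep T$ and to an ideal $\ideal b\subseteq\Rep T$ with $\Rep T/\ideal b\cong \Rep H_1\otimes_{\Rep G}\Rep H_2$ produces a prime $\ideal p'\subseteq\ideal p$ in $\support[\Rep G](\Rep H_1\otimes_{\Rep G}\Rep H_2)$ with
\[
  \dim(\Rep H_1\otimes_{\Rep G}\Rep H_2)_{\ideal p}
  \;=\;\dim(\Rep G/\ideal p')_{\ideal p}
  \;=\;\dim(\Rep G/\ideal p')-\dim(\Rep G/\ideal p),
\]
the second step again by biequidimensionality. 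By \Cref{supports}~(\labelcref{supports:intersection}), $\ideal p'$ lies in $\support[\Rep G]\Rep(H_1\cap gH_2g^{-1})$ for some $g\in G$, and \Cref{supports}~(\labelcref{supports:dim}) then bounds $\dim(\Rep G/\ideal p')\leq r'+1$. Substituting $r'+1=r_1+1-c_2$, which is just a rearrangement of \eqref{eq:max-rank-assumption-H}, yields the desired inequality. The main technical hurdle will be this final step: \Cref{dimension-lemma}, the support bound in \Cref{supports}~(\labelcref{supports:dim}), and the maximal-rank identity must conspire to pin down the height of $(\bar{\sequence y})$ precisely, and it is exactly at this point that the hypothesis on $\rank H_1+\rank H_2-\interrank{H_1}{H_2}$ enters.
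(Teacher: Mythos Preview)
Your argument is correct and follows the same underlying strategy as the paper: identify \(\Tor_i^{\Rep G}(\Rep H_1,\Rep H_2)_{\ideal p}\) with Koszul homology, then show the relevant sequence is regular in a Cohen--Macaulay local ring via a dimension count that invokes \Cref{dimension-lemma}, biequidimensionality (\Cref{RG-biequidimensional-for-connected-G}), and the support bound \Cref{supports}\,(\labelcref{supports:dim}). The one organisational difference is that the paper works throughout in the regular local ring \((\Rep T)_{\ideal p}\) with the combined sequence \((\sequence x,i^*\sequence y)\), and reaches the identification \(\Tor_i\cong H^i(\koszul{\Rep T}{\Rep T}{\sequence x,i^*\sequence y})\) via the spectral sequence of the double complex \(\koszul{\Rep T}{\Rep T}{\sequence x}\otimes_{\Rep G}\koszul{\Rep G}{\Rep G}{\sequence y}\); you instead pass directly to the quotient \((\Rep H_1)_{\ideal p}=(\Rep T)_{\ideal p}/(\sequence x)\) and use only the Koszul complex on \(\bar{\sequence y}\). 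Your route is slightly more economical---the spectral sequence is not really needed once one observes that \(\koszul{\Rep G}{\Rep G}{\sequence y}\) already resolves \(\Rep H_2\)---while the paper's formulation has the minor advantage of carrying out the regularity check in a regular local ring rather than a complete-intersection quotient. One small point to tighten: before invoking \Cref{dimension-lemma} you should note (as the paper does) that you may assume \(\ideal p\in\support[\Rep G]{\Rep H_1}\cap\support[\Rep G]{\Rep H_2}\), since otherwise the localized Tor vanishes trivially; this also ensures \(\bar{\sequence y}\) lies in the maximal ideal.
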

\begin{proof}
  Choose a maximal torus \(T\subset G\) containing the torus \(H_1\), and let \(i\colon T\hookrightarrow G\) denote the inclusion.
  Then \(H_1\) is a direct factor of \(T\) \cite{HilgertNeeb}*{Lem.~15.3.2}, and the kernel of the restriction \(\Rep T\to \Rep H_1\) is generated by a regular sequence \(\sequence x = (x_1,\dots,x_{c_1})\) of length \(c_1 = \rank G - \rank H_1\).
  It follows that \(\Rep H_1 \cong \Rep T/(\sequence x)\) as an \(\Rep T\)-module, and that the Koszul complex defined by \(\sequence x\) provides a free resolution of \(\Rep H_1\) as an \(\Rep T\)-module.  We write \(\koszul{\Rep T}{\sequence x}\)  for this complex, and \(\koszul{\Rep T}{\sequence x}\to \Rep H_1\) for the resolution.  As \(\Rep T\) is a free \(\Rep G\)-module, this is also a free resolution of \(\Rep H_1\) as an \(\Rep G\)-module.

  For \(H_2\), we have an isomorphism of \(\Rep G\)-modules \(\Rep H_2 \cong \Rep G/(\sequence y)\) by assumption, and the Koszul complex \(\koszul{\Rep G}{\sequence y}\) provides a free resolution of \(\Rep H_2\) as an \(\Rep G\)-module.

  With these free resolutions \(\koszul{\Rep T}{\sequence x}\) and \(\koszul{\Rep G}{\sequence y}\) of \(\Rep H_1\) and \(\Rep H_2\) at our disposal, we can compute the Tor groups we are interested in as the homology groups of the tensor product of the resolutions (see, for example, \cite[Chapter~VI, above Proposition~1.2]{CartanEilenberg}):

  \begin{equation}\label{eq:Tor-via-double-resolution}
    \Tor_i^{\Rep G}(\Rep H_1, \Rep H_2) \cong \H_i(\koszul{\Rep T}{\sequence x}\otimes_{\Rep G}\koszul{\Rep G}{\sequence y})
  \end{equation}

  Next, observe that we have an isomorphism of \(\Rep G\)-modules, and an isomorphism of complexes of \(\Rep G\)-modules, as follows:
  \begin{align}
    \Rep H_1 \otimes_{\Rep G} \Rep H_2 &\cong \Rep T/(\sequence x, i^*\sequence y)\label{eq:koszul-trick-0}\\
    \koszul{\Rep T}{\sequence x}\otimes_{\Rep G} \koszul{\Rep G}{\sequence y} &\cong \koszul{\Rep T}{\sequence x, i^*\sequence y}\label{eq:koszul-trick}
  \end{align}
  The first isomorphism is clear.  To verify the second isomorphism, note that Koszul complexes can be decomposed into tensor products of Koszul complexes of lengths two.  Specifically, in our case, we have
  \begin{align*}
    \koszul{\Rep G}{\sequence y} &\underset{\Rep G}\cong \;\bigotimes_{\mathclap{\substack{\Rep G\\j=1,\dots,c_2}}} \left(\Rep G \xrightarrow{y_j} \Rep G\right),
    \shortintertext{ and similarly }
    \koszul{\Rep T}{\sequence x, i^*\sequence y} &\underset{\Rep T}\cong \; \koszul{\Rep T}{\sequence x} \;\otimes_{\Rep T}\; \bigotimes_{\mathclap{\substack{\Rep G\\j=1,\dots,c_2}}} \left(\Rep T \xrightarrow{i^* y_j} \Rep T\right).
  \end{align*}
  In view of these decompositions, the isomorphism \eqref{eq:koszul-trick} can be verified by successively applying the isomorphisms
  \begin{align*}
    \left(\Rep T\xrightarrow{i^*y_j} \Rep T\right) &\underset{\Rep G}\cong \Rep T \otimes_{\Rep G} (\Rep G \xrightarrow{y_j} \Rep G)
  \end{align*}
  for \(j = 1, \dots, c_2\).
  We have taken care to distinguish \(\Rep T\)-module isomorphisms from \(\Rep G\)-module isomorphisms in the notation, but note that every isomorphism of \(\Rep T\)-modules is \textit{a fortiori} an isomorphism of \(\Rep G\)-modules.

Combining \eqref{eq:Tor-via-double-resolution} with \eqref{eq:koszul-trick}, we arrive at the following isomorphism of \(\Rep G\)-modules:
\begin{equation}\label{eq:Tor-upshot}
  \Tor_i^{\Rep G}(\Rep H_1, \Rep H_2) \cong \H_i(\koszul{\Rep T}{\sequence x, i^*\sequence y}).
\end{equation}

    The basic strategy now is to compute the Krull dimension of \(\Rep T/(\sequence x, i^*\sequence y)\), to deduce that \((\sequence x, i^*\sequence y)\) is a regular sequence in \(\Rep T\), and to conclude that the augmented complex \(\koszul{\Rep T}{\sequence x, i^*\sequence y}\to \Rep T/(\sequence x, i^*\sequence y)\) is exact, so that the homology groups in \eqref{eq:Tor-upshot} vanish in positive degrees.

    In order to implement this strategy, we now fix and localize at a prime ideal \(\ideal p\) of \(\Rep G\) that lies in \(\support[\Rep G]{\Rep{(\centre G)}}\).
    In view of \cref{eq:Tor-commutes-with-localization}, we may and will moreover assume that \(\ideal p\in \support[\Rep G]{\Rep H_1}\cap \support[\Rep G]{\Rep H_2}\).

    By \Cref{supports}~(\labelcref{supports:centre}), 
    there is a unique ideal \(\ideal p_T\subset \Rep T\) lying over \(\ideal p \subset \Rep G\).
    It follows from the construction that \((\sequence x,i^*\sequence y) \subseteq \ideal p_T\):  We have \((\sequence x) = \ker(\Rep T \to \Rep H_1)\), so \((i^*)^{-1}(\sequence x)\subseteq \ker(\Rep G \to \Rep H_1)\).  Moreover, by the equivalence (b~\(\iff\)~c) of \Cref{prop:segal}, \(\ker(\Rep G \to \Rep H_1)\subseteq \ideal p\).  So \((i^*)^{-1}(\sequence x)\subseteq \ideal p\), and we deduce from \Cref{RG-for-connected-G}~(\labelcref{RG-for-connected-G:max-RT-primes}) that \((\sequence x)\subseteq \ideal p_T\).
    Similarly, \((\sequence y) = \ker(\Rep G \to \Rep H_2) \subseteq \ideal p\), and as \(\ideal p = (i^*)^{-1}(\ideal p_T)\), we find \((i^*\sequence y)\subseteq \ideal p_T\).

    We can therefore view \((\sequence x, i^*\sequence y)\) as a sequence in the maximal ideal \(\ideal p_T\cdot(\Rep T)_{\ideal p}\) of the local ring \((\Rep T)_{\ideal p}\).
    Using \eqref{eq:koszul-trick-0} and the first equality of \Cref{supports}~(\labelcref{supports:intersection}),
    we see that \(\ideal p \in \support[\Rep G]{(\Rep T/(\sequence x, i^*\sequence y))}\). By \Cref{dimension-lemma}, we can find another prime \(\ideal p'\in \support[\Rep G]{(\Rep T/(\sequence x, i^*\sequence y))}\) with \(\ideal p' \subseteq \ideal p\) such that the first equality in the following chain of equalities and inequalities holds:
    \begin{equation}
      \begin{aligned}
        \dim{(\Rep T)_{\ideal p}} - \dim{(\Rep T/(\sequence x,i^*\sequence y))_{\ideal p}}
        &= \dim{(\Rep G)_{\ideal p}} - \dim{(\Rep G/\ideal p')_{\ideal p}}\\
        &= \dim{\Rep G} - \dim{\Rep G/\ideal p'}\\
        &\geq \rank G - \interrank{H_1}{H_2} \\
        &= c_1 + c_2.
      \end{aligned}
      \label{eq:key-dim-calculation}
    \end{equation}
    For the second equality in this chain, we use the fact that the dimension formula holds in \(\Rep G\) (\Cref{RG-biequidimensional-for-connected-G}), and, a fortiori,
    in the localization\footnote{
      For the localization \((\Rep G)_{\ideal p}\), the dimension formula may alternatively be deduced following the argument for \((\Rep T)_{\ideal p}\) presented in the next paragraph:  \((\Rep G)_{\ideal p}\) is a regular local ring, hence Cohen--Macaulay, so a version of the dimension formula for arbitrary, not necessarily prime, ideals holds by \cite[Theorem~17.4\,(i)]{Matsumura:CRT}.
    }
     \((\Rep G)_{\ideal p}\), so that both sides of the equation can be identified with the dimension of \((\Rep G_{\ideal p})_{\ideal p'} = \Rep G_{\ideal p'}\).
For the inequality, note that by \eqref{eq:koszul-trick-0} the ideal \(\ideal p'\) is contained in \(\support[\Rep G]{(\Rep H_1\otimes_{\Rep G} \Rep H_2)}\), and that hence by \Cref{supports}~(\labelcref{supports:intersection}) it is contained in \(\support[\Rep G]{(H_1 \cap gH_2g^{-1})}\) for some \(g\in G\).  The inequality therefore follows from \Cref{supports}~(\labelcref{supports:dim}) and \Cref{RG-for-general-G}~(\labelcref{RG-for-general-G:dimension}).  The final equality is immediate from \eqref{eq:max-rank-assumption-H} and the definitions of \(c_i\) as \(\rank G - \rank H_i\).
Note that \(c_1 + c_2\)  is precisely the length of the sequence \((\sequence x, i^*\sequence y)\).

On the other hand, as \((\Rep T)_{\ideal p}\) is regular (\Cref{RG-regular-for-good-G}), and hence in particular Cohen--Macaulay \cite[Theorem~17.8]{Matsumura:CRT}, we also know that the difference in dimensions on the left side of \eqref{eq:key-dim-calculation} is equal to the height of the ideal generated by \(\sequence x\) and \(i^*\sequence y\)  \cite[Theorem~17.4\,(i)]{Matsumura:CRT}. This height is bounded above by \(c_1 + c_2\), by Krull's Hauptidealsatz.  Thus, we obtain equality in \eqref{eq:key-dim-calculation}, and conclude:
\[
  \dim{(\Rep T)_{\ideal p}} - \dim{(\Rep T)_{\ideal p}/(\sequence x,i^*\sequence y)} = c_1 + c_2
\]
Using once more that \((\Rep T)_{\ideal p}\) is Cohen--Macaulay, we can now apply \cite{Hartshorne}*{Chapter~II, Theorem~8.21A~(c)} or \cite[Theorem~17.4\,(iii)]{Matsumura:CRT} to deduce that the sequence \((\sequence x, i^*\sequence y)\) is regular.
Hence, using \cite[Theorem~16.5\,(i)]{Matsumura:CRT}, we conclude that the Koszul complex \(\koszul{(\Rep T)_{\ideal p}}{\sequence x, i^*\sequence y}\) is exact in all positive degrees.
\end{proof}

\begin{prop}[tori of maximal ranks]\label{maximal-rank-tori}
  Suppose \(G\) is a good Lie group, and that \(T_1\) and \(T_2\) are tori in \(G\)
  such that
  \begin{equation}\label{eq:max-rank-assumption-T}
    \rank T_1 + \rank T_2 - \interrank{T_1}{T_2} = \rank G.
  \end{equation}
  Then
  \(
  \Tor_i^{\Rep G}(\Rep T_1, \Rep T_2)_{\ideal p} = 0
  \)
  for all  \(i > 0\) and for all prime ideals \(\ideal p \in \support[\Rep G]{\Rep{(\centre G )}}\).
\end{prop}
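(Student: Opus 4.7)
The plan is to reduce this statement to \Cref{maximal-rank-particular-subgroups} via the ``reduction to the diagonal'' provided by \Cref{diagonal-reformulation}. That result furnishes a canonical isomorphism
\[
  \Tor_i^{\Rep G}(\Rep T_1,\Rep T_2)_{\ideal p}
  \;\cong\;
  \Tor_i^{\Rep(G\times G)}\bigl(\Rep(T_1\times T_2),\Rep G\bigr)_{(\Delta^*)^{-1}(\ideal p)},
\]
so it suffices to prove that the right-hand side vanishes for all $i>0$.

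To obtain this vanishing, I would apply \Cref{maximal-rank-particular-subgroups} to the triple $(G\times G,\,T_1\times T_2,\,\Delta G)$ and the prime $(\Delta^*)^{-1}(\ideal p)$. All the hypotheses check out mechanically. First, $G\times G$ is good: since $G$ is good, \Cref{thm:steinberg} gives $\Rep G\cong\ZZ[x_1,\dots,x_m,y_1^{\pm 1},\dots,y_n^{\pm 1}]$ with $m+n=\rank G$, and therefore $\Rep(G\times G)\cong\Rep G\otimes_{\ZZ}\Rep G$ inherits the same polynomial-tensor-Laurent form. Second, $T_1\times T_2$ is patently a torus, and $\Delta G$ is closed and connected. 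Third, using $\rank(T_1\times T_2)=\rank T_1+\rank T_2$, $\rank(\Delta G)=\rank G$, $\rank(G\times G)=2\rank G$, and $\interrank{T_1\times T_2}{\Delta G}=\interrank{T_1}{T_2}$ (the last being part of \Cref{diagonal-reformulation}), the hypothesis \eqref{eq:max-rank-assumption-T} translates directly into the corresponding maximal-rank equation for the new triple. Fourth, the restriction $\Delta^*\colon\Rep(G\times G)\twoheadrightarrow\Rep(\Delta G)$ is simply the multiplication map $\Rep G\otimes\Rep G\to \Rep G$, which is surjective with kernel generated by the regular sequence $(x_i\otimes 1-1\otimes x_i,\;y_j\otimes 1-1\otimes y_j)$ of length $m+n=\rank(G\times G)-\rank(\Delta G)$, as required. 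Finally, the support condition $(\Delta^*)^{-1}(\ideal p)\in\support[\Rep(G\times G)]{\Rep(\centre{(G\times G)})}$ is precisely the last claim of \Cref{diagonal-reformulation}.

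Once these verifications are in place, \Cref{maximal-rank-particular-subgroups} yields the vanishing of the reformulated Tor-group, and transporting back along the displayed isomorphism completes the proof. The real content sits entirely inside \Cref{maximal-rank-particular-subgroups} and the diagonal reformulation \Cref{diagonal-reformulation}; the step from tori to ``tori plus a regular-sequence quotient'' requires no new ideas, because crossing with the ambient group and passing to the diagonal automatically produces a quotient by a regular sequence drawn from the polynomial-and-Laurent generators of $\Rep G$. The only conceptual hurdle, already surmounted earlier in the paper, is recognizing that this diagonal trick is the right way to fabricate the regular-sequence hypothesis out of thin air.
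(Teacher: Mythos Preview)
Your proof is correct and follows exactly the same route as the paper's own argument: pass to the triple $(G\times G,\,T_1\times T_2,\,\Delta G)$ via \Cref{diagonal-reformulation}, verify the hypotheses of \Cref{maximal-rank-particular-subgroups} for this triple (goodness of $G\times G$, connectedness, the maximal-rank equation, surjectivity of $\Delta^*$ with kernel a regular sequence of length $\rank G$, and the support condition on the centre), and conclude. Your version is in fact more explicit than the paper's, which merely asserts that ``we can easily find a regular sequence of length $\rank G$ generating the kernel of $\Delta^*$'' without writing it down.
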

\begin{proof}
  Consider the triple \((G\times G, T_1\times T_2, \Delta G)\).
  The group \(G\times G\) is good as \(G\) is good and \(\Rep (G\times G)\cong \Rep{G}\otimes\Rep{G}\).
  The subgroups \(T_1\times T_2\) and \(\Delta G\) are connected.
  Using \Cref{diagonal-reformulation}~(\labelcref{diagonal-reformulation:interrank}), we see that the triple still satisfies the maximal rank condition \eqref{eq:max-rank-assumption-H}.  The restriction \(\Delta^*\colon \Rep(G\times G)\to \Rep{G}\) is surjective.  Also, given that \(G\) is good, we can easily find a regular sequence of length \(\rank G\) generating the kernel of \(\Delta^*\).  So \Cref{maximal-rank-particular-subgroups} applies to \((G\times G, T_1\times T_2, \Delta G)\).  Using \Cref{diagonal-reformulation}~(\labelcref{diagonal-reformulation:Tor}) and (\labelcref{diagonal-reformulation:support}), we conclude that
  \(
  \Tor_i^{\Rep G}(\Rep T_1, \Rep T_2)_{\ideal p} = 0
  \)
  for all  \(i > 0\) and for all prime ideals \(\ideal p \in \support[\Rep G]{\Rep{(\centre G )}}\).
\end{proof}

\begin{prop}[arbitrary subgroups]\label{arbitrary-subgroups}
  Suppose \(G\) is a good Lie group, and that \(H_1\) and \(H_2\) are connected subgroups.
  Then
  \(
  \Tor_i^{\Rep G}(\Rep H_1, \Rep H_2)_{\ideal p} = 0
  \)
  for all
  \[
    i >  \rank G - (\rank H_1 + \rank H_2) + \interrank{H_1}{H_2}
  \]
  and for all prime ideals \(\ideal p \in \support[\Rep G]{\Rep{(\centre G )}}\).
\end{prop}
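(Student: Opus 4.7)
The plan is to combine the maximal-rank case handled in \Cref{maximal-rank-tori} with the enlargement construction of \Cref{enlarging-tori}, stitched together by a change-of-rings argument.

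First, I reduce to tori. By \Cref{red:torus}, it suffices to establish the vanishing for maximal tori \(T_i\subseteq H_i\) in place of the \(H_i\); by \Cref{interrank-can-be-computed-on-Tori}, \(\interrank{T_1}{T_2} = \interrank{H_1}{H_2}\), and \(\rank T_i = \rank H_i\) by definition, so the numerical bound on \(i\) is unchanged. After conjugating if necessary, I may assume \(T_1\) and \(T_2\) lie in a common maximal torus \(T\subseteq G\). Applying \Cref{enlarging-tori} I obtain an intermediate torus \(T_1\subseteq T_1^+\subseteq T\) with \(\rank G = \rank T_1^+ + \rank T_2 - \interrank{T_1^+}{T_2}\) and \(\interrank{T_1^+}{T_2} = \interrank{T_1}{T_2}\). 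Set \(d := \rank T_1^+ - \rank T_1\); a short calculation gives \(d = \rank G - (\rank T_1 + \rank T_2) + \interrank{T_1}{T_2}\), exactly the bound appearing in the statement. Since \(T_1\) is a subtorus of \(T_1^+\), \Cref{subgroups-of-tori}\,(\labelcref{subgroups-of-tori:restriction-to-subtori}) tells us that the kernel of the surjection \(\Rep T_1^+\twoheadrightarrow \Rep T_1\) is generated by a regular sequence of length \(d\), so the associated Koszul complex is a free resolution of \(\Rep T_1\) over \(\Rep T_1^+\) of length \(d\).

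Next I invoke \Cref{maximal-rank-tori} on the triple \((G,T_1^+,T_2)\), which is now permitted by the maximal-rank equality just secured: it gives \(\Tor_q^{\Rep G}(\Rep T_1^+,\Rep T_2)_{\ideal p} = 0\) for all \(q>0\) and all \(\ideal p\in\support[\Rep G]{\Rep(\centre G)}\). Fix such a prime \(\ideal p\) and localize. Since localization commutes with Tor and with tensor products, we may apply \Cref{Tor-CoR-I} to the localized data with \(\Lambda = (\Rep G)_{\ideal p}\), \(\Gamma = (\Rep T_1^+)_{\ideal p}\), \(A = (\Rep T_2)_{\ideal p}\) and \(C = (\Rep T_1)_{\ideal p}\): the vanishing just established is exactly its hypothesis, and its conclusion yields a natural isomorphism
\[
  \Tor_i^{\Rep G}(\Rep T_1,\Rep T_2)_{\ideal p} \;\cong\; \Tor_i^{\Rep T_1^+}\bigl(\Rep T_1^+\otimes_{\Rep G}\Rep T_2,\,\Rep T_1\bigr)_{\ideal p}.
\]
The right-hand side is computed by the length-\(d\) Koszul resolution from the previous paragraph, hence vanishes for \(i>d\), as required.

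The heavy lifting was done by \Cref{maximal-rank-tori}; the step here is essentially formal once that is available. The one subtlety is the failure of the biquotient condition upon enlarging \(T_1\) to \(T_1^+\) (\Cref{enlargement-destroys-biquotient-condition}), which is precisely why the whole machinery has been designed to produce vanishing at primes in \(\support[\Rep G]{\Rep(\centre G)}\) rather than relying on the biquotient hypothesis at this final stage.
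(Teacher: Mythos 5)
Your proof is correct and follows essentially the same route as the paper's: reduce to tori via \Cref{red:torus} and \Cref{interrank-can-be-computed-on-Tori}, enlarge \(T_1\) to \(T_1^+\) via \Cref{enlarging-tori}, apply \Cref{maximal-rank-tori} to the enlarged triple, and finish with \Cref{Tor-CoR-I}. The only (immaterial) difference is that you bound the relevant projective dimension directly via the Koszul resolution of \(\Rep T_1\) over \(\Rep T_1^+\) from \Cref{subgroups-of-tori}\,(\labelcref{subgroups-of-tori:restriction-to-subtori}), whereas the paper cites \Cref{cor:proj-dim}; both give the same bound \(\rank T_1^+ - \rank T_1\).
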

\begin{proof}
  Choose maximal tori \(T_1\) and \(T_2\) of \(H_1\) and \(H_2\), respectively.
  By \Cref{biquotient-condition-can-be-checked-on-tori}, passing from the groups \(H_i\) to their maximal tori \(T_i\) does not change the intersection rank.
  By \Cref{enlarging-tori}, we can find a torus \(T_1^+\supseteq T_1\) such that \(\interrank{T_1^+}{T_2} = \interrank{T_1}{T_2}\) and such that the triple \((G,T_1^+,T_2)\) satisfies the maximal rank condition \eqref{eq:max-rank-assumption-T}.
  So by \Cref{maximal-rank-tori}, \(\Tor_*^{\Rep G}(\Rep{(T_1^+)}, \Rep T_2)_{\ideal p}\) vanishes in all positive degrees for all \(\ideal p \in \support[\Rep G]{\Rep{(\centre G )}}\).

  We can now apply \Cref{Tor-CoR-I} with \(\Lambda\), \(\Gamma\), \(A\) and \(C\) as follows:
  \[
    \begin{tikzcd}[column sep=tiny,row sep=tiny]
      (\Rep T_2)_{\ideal p} \arrow[d,dash] & (\Rep T_1)_{\ideal p} \arrow[d,dash]\\
      (\Rep G)_{\ideal p} \ar[r] & (\Rep T_1^+)_{\ideal p}
    \end{tikzcd}
  \]
  This gives us isomorphisms
  \[
    \Tor_i^{\Rep G}(\Rep T_2, \Rep T_1)_{\ideal p} \cong \Tor_i^{\Rep T_1^+}(M,\Rep T_1)_{\ideal p}
  \]
  in all degrees~\(i\), with \(M := \Rep{T_2}\otimes_{\Rep G}\Rep{T_1^+}\). The Tor groups on the right vanish for all \(i > \projdim_{\Rep T_1^+} \Rep T_1\), and by \Cref{cor:proj-dim}, \(\projdim_{\Rep T_1^+} \Rep T_1  \leq \rank T_1^+ - \rank T_1\).  As  \((G,T_1^+,T_2)\) satisfies \eqref{eq:max-rank-assumption-T}, and as \(\interrank{T_1^+}{T_2} = \interrank{T_1}{T_2}\), this bound can be rewritten as \(\rank T_1^+ - \rank T_1 = \rank G - \rank T_1 - \rank T_2 + \interrank{T_1}{T_2}\).
  We thus deduce that
  \(  \Tor_i^{\Rep G}(\Rep T_1, \Rep T_2)_{\ideal p}
  \)
  vanishes in the required range.   Using \Cref{red:torus}, we obtain the same claim for the triple \((G,H_1,H_2)\).
\end{proof}

\Cref{thm:Tor-of-pairs} is now just a special case:

\begin{proof}[Proof of \Cref{thm:Tor-of-pairs}]
  Fix some degree \(i > \rank G - (\rank H_1 + \rank H_2) + \interrank{H_1}{H_2}\).
  Consider a prime \(\ideal p\) supporting \(\Tor_i^{\Rep G}(\Rep H_1, \Rep H_2)\).  Any such prime necessarily lies in the support of both \(\Rep{H_1}\) and \(\Rep{H_2}\).
  As \((G,H_1,H_2)\) satisfies the lax biquotient condition, this implies that
  \(\ideal p\) also lies in the support of \(\Rep{(\centre G)}\), as noted in \Cref{eg:lax-biquotient-condition-translated-into-commutative-algebra}.
  Hence by \Cref{arbitrary-subgroups}, \(\Tor_i^{\Rep G}(\Rep H_1, \Rep H_2)_{\ideal p} = 0\).
  So we find that the localization of \(\Tor_i^{\Rep G}(\Rep H_1, \Rep H_2)\) at \emph{any} prime ideal of \(\Rep{G}\) is zero, hence \(\Tor_i^{\Rep G}(\Rep H_1, \Rep H_2)\) itself must vanish.
\end{proof}

Finally, \Cref{thm:Tor-of-pairs-simplified} is, of course, a special case of \Cref{thm:Tor-of-pairs}.

\section{K-theory of biquotients}
\label{sec:K}
This short final section explains how to apply \Cref{thm:Tor-of-pairs-simplified} to biquotients to obtain \Cref{K-of-biquotients}.
\newcommand{\leftcs}[2]{#1\backslash #2}%
\newcommand{\rightcs}[2]{#1/ #2}%

\begin{thm}[\citelist{\cite{Hodgkin}\cite{McLeod}*{Thm~4.1}}]
  Suppose \(G\) is a compact connected Lie group with torsion-free fundamental group. Then for \(G\)-spaces \(X\) and \(Y\) we have a strongly convergent spectral sequence
  \[
    \{\Tor_{-p}^{\Rep G}(\K_G^*(X),\K_G^*(Y))\}^q
    \Rightarrow \K^{p+q}_G(X\times Y).
  \]
  Here, the \(\Rep{G}\)-modules \(\K_G^*(X)\) and \(\K_G^*(Y)\) are understood to be \(\ZZ/2\)-graded, and \(\Tor\) denotes graded \(\Tor\).
  The \(G\)-spaces are assumed to be compactly generated and have the homotopy type of a CW complex; see \cite[\S\,I.1]{Hodgkin}.
\end{thm}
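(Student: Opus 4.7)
The plan is to realise the spectral sequence as the hypercohomology spectral sequence associated with a geometric resolution of $Y$ in the equivariant stable homotopy category. I would work in the category of pointed $G$-CW spectra, in which equivariant complex K-theory is represented by an equivariant K-theory spectrum and in which the (external) smash product corresponds to the Cartesian product of $G$-spaces after adding disjoint basepoints. The goal is to filter $Y$ in a controlled way and then smash with $X_+$.

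First, I would construct building blocks whose equivariant K-theory is projective over $\Rep{G}$. The key geometric input is Steinberg's theorem: for any closed connected subgroup $H\subseteq G$ of maximal rank, $\K_G^*(G/H)\cong \Rep{H}$ is a finitely generated free $\Rep{G}$-module. Because $\pi_1 G$ is torsion-free, \Cref{RG-regular-for-good-G} guarantees that $\Rep{G}$ has finite global dimension, so every finitely generated $\Rep{G}$-module admits a finite free resolution. I would then build inductively a finite tower $Y = Y_0 \to Y_1 \to \dots \to Y_N$ of $G$-spectra whose successive cofibres are wedges of shifted copies of $G/H_+$ with $H$ of maximal rank, arranged so that passing to $\K_G^*$ recovers a chosen finite free $\Rep{G}$-resolution of $\K_G^*(Y)$.

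Second, I would smash this tower with $X_+$ and apply $\K_G^*$. For the building blocks a Künneth isomorphism $\K_G^*(X\times G/H)\cong \K_G^*(X)\otimes_{\Rep{G}}\Rep{H}$ holds directly, because $\Rep{H}$ is a finitely generated free $\Rep{G}$-module and this reduces the claim to a finite sum. Consequently, applying $\K_G^*(X\wedge -)$ to the tower produces an exact couple whose $E_1$-page is $\K_G^*(X)$ tensored with the chosen free resolution of $\K_G^*(Y)$. The $E_2$-page is therefore the graded $\Tor_{\Rep{G}}^*(\K_G^*(X),\K_G^*(Y))$, and strong convergence is automatic because the filtration has finite length.

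The main obstacle is the geometric step: one must actually realise a prescribed algebraic $\Rep{G}$-free resolution of $\K_G^*(Y)$ by a tower of $G$-spectra whose successive cofibres are wedges of maximal-rank homogeneous spaces, and one must verify that the resulting filtration on $\K_G^*(X\times Y)$ is compatible with the algebraic resolution after smashing with $X_+$. This is the technical heart of Hodgkin's construction, and the extension to arbitrary compact connected $G$ with torsion-free $\pi_1$ is the content of \cite{McLeod}*{Thm~4.1}; I would follow their inductive cell-attachment argument in the equivariant stable category and defer the delicate point-set verifications to those references.
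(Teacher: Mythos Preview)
The paper does not prove this theorem; it is quoted from the literature (Hodgkin, with McLeod's extension) as a black box and then applied to the biquotient situation. So there is no proof in the paper to compare your proposal against.

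Your sketch has the right overall architecture---resolve one factor geometrically by objects whose equivariant K-theory is $\Rep{G}$-free, smash with the other factor, and read off the spectral sequence from the resulting tower---and this is indeed the broad strategy of Hodgkin's paper. Two remarks on the details. First, Hodgkin does not use homogeneous spaces $G/H$ with $H$ of maximal rank as building blocks; he builds the resolution from iterated cofibres of the map $G_+\to S^0$, exploiting the exterior-algebra structure of $\K^*(G)$ available when $\pi_1 G$ is torsion-free, which yields a Koszul-type geometric resolution directly. Your variant via maximal-rank $G/H_+$ would require showing that arbitrary classes in $\K_G^*(Y)$ can be geometrically realised by maps involving wedges of such objects, and that the cofibres at each stage again have finitely generated $\K_G^*$; neither is obvious, and this is not how the cited references proceed. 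Second, the finiteness of the tower (hence strong convergence) does ultimately rest on the finite global dimension of $\Rep{G}$, as you correctly identify. Since you defer the geometric realisation step to the references anyway, your outline is acceptable as an indication of the shape of the argument, but the specific building blocks you name are not the ones Hodgkin and McLeod actually use.
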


Given a strict biquotient \(\biquotient{G}{H_1}{H_2}\), we have a principal \(G\)-bundle \(\leftcs{H_1}{G}\times\rightcs{G}{H_2} \to \biquotient{G}{H_1}{H_2}\) \cite{Singhof:DCM}. Take
\( X := \leftcs{H_1}{G}\) and
\(Y := \rightcs{G}{H_2}\).
Then \(\K^*_G(X) \cong \Rep H_1\), \(\K^*_G(Y) \cong \Rep H_2\), and \(\K^*_G(X\times Y) \cong \K^*(\biquotient{G}{H_1}{H_2})\).
So the spectral sequence takes the form
\[
  \Tor_{-p}^{\Rep G}(\Rep H_1,\Rep H_2)
  \Rightarrow \K^p(\biquotient{G}{H_1}{H_2}).
\]
In particular, given that \(\Rep H_1\) and \(\Rep H_2\) are concentrated in degree zero, the graded Tor is just the usual Tor.
\Cref{thm:Tor-of-pairs-simplified} therefore implies the results on the K-theory of biquotients summarized in \Cref{K-of-biquotients} in exactly the same way that Steinberg's Theorem and more generally \Cref{cor:proj-dim} provide a computation of the K-theory of homogeneous spaces.  We refer to \cite[proof of Theorem~3.6 and, in particular, Figure~1]{SCSQ} for details of the argument.

\begin{bibdiv}
  \begin{biblist}
    \bibselect{main}
  \end{biblist}
\end{bibdiv}
\end{document}